\documentclass[a4paper,11pt,reqno]{amsart}
\usepackage[T1]{fontenc}
\usepackage{lmodern}
\usepackage{amsmath,amssymb,amsthm,mathtools}
\usepackage[margin=27mm]{geometry}
\usepackage{enumitem,needspace,placeins}
\usepackage{xcolor}
\usepackage[mathlines]{lineno}

\usepackage{tikz}
\usetikzlibrary{fit,shapes.geometric}
\usepackage[colorlinks=true,linkcolor=black,citecolor=black,urlcolor=black]{hyperref}
\usepackage{xurl}
\hypersetup{pdftitle={Ehrhart positivity of series-parallel matroids},
 pdfauthor={},pdfsubject={Positive expansions for load polynomials of S-P trees}}

\theoremstyle{plain}
\newtheorem{theorem}{Theorem}[section]
\newtheorem{lemma}[theorem]{Lemma}
\newtheorem{corollary}[theorem]{Corollary}
\newtheorem{conjecture}[theorem]{Conjecture}
\newtheorem{proposition}[theorem]{Proposition}
\theoremstyle{definition}
\newtheorem{definition}[theorem]{Definition}
\newtheorem{example}{Example}[section]
\theoremstyle{remark}
\newtheorem{remark}[theorem]{Remark}
\theoremstyle{plain}
\numberwithin{equation}{section}
\allowdisplaybreaks[2]
\setlist{itemsep=0.2em,topsep=0.4em}

\title[Series-parallel extensions and Ehrhart positivity]{Series-parallel extensions and Ehrhart positivity}
\author{  YAO LI AND SIHONG SHAO}
\date{}
\keywords{Ehrhart polynomials, series-parallel matroids, multi-path matroids, lattice polytopes, Ehrhart positive.}

\begin{document}
\begin{abstract}
We prove Ehrhart positivity for series-parallel matroids and extend this
result to all series-parallel extensions of multi-path matroids.
A formula for the Ehrhart coefficients of series-parallel matroids is
derived and used to prove strict positivity.
An extension criterion based on weighted lattice-point counts is established.
Recursive formulas for lattice-point counts are derived, and the recursion is
shown to preserve a class of polynomials with nonnegative coefficients.
We finally construct a connected transversal matroid with a
negative Ehrhart coefficient, disproving the conjecture of Ferroni,
Morales, and Panova that every connected matroid whose base polytope
admits a subdivision into series-parallel matroid base polytopes is
Ehrhart positive.
\end{abstract}
\maketitle

\section{Introduction}\label{sec:introduction}\label{ux9605ux8bfbux7ea6ux5b9aux4e0eux8bc1ux660eux8303ux56f4}

Let \(E\) be a finite set. For a polytope
\(\mathcal P\subseteq\mathbb R^E\) with vertices in \(\mathbb Z^E\), define
\[
i(\mathcal P,t)=\#\bigl(t\mathcal P\cap\mathbb Z^E\bigr).
\]
Unless otherwise stated, the dilation parameter \(t\) is a nonnegative integer.
Ehrhart's theorem states that \(i(\mathcal P,t)\) is a polynomial
in \(t\) with rational coefficients and degree \(\dim\mathcal P\);
see \cite[Theorem~3.8]{BeckRobinsComputing}. It is called the
\emph{Ehrhart polynomial} of \(\mathcal P\), and its constant term
is \(1\). We say that \(\mathcal P\) is \emph{Ehrhart positive} if
all coefficients of \(i(\mathcal P,t)\) are nonnegative.
A particularly rich source of lattice polytopes is provided by matroids.
Matroid base polytopes are generalized permutohedra
\cite{PostnikovPermutohedra,ArdilaBenedettiDokerMatroidPolytopes}.
For a matroid \(M\) on \(E\), let \(\mathcal B(M)\) denote its set of bases.
Its base polytope is
\begin{equation}
\mathcal P(M)=\operatorname{conv}\{\mathbf1_B:B\in\mathcal B(M)\},
\label{eq:matroid-base-polytope}
\end{equation}
where \(\mathbf1_B\in\{0,1\}^E\) is the indicator vector of \(B\).
We write \(i(M,t):=i(\mathcal P(M),t)\), and call \(M\)
\emph{Ehrhart positive} when \(\mathcal P(M)\) is Ehrhart positive.

De Loera, Haws, and K\"oppe studied Ehrhart polynomials of matroid
polytopes and conjectured that every matroid is Ehrhart positive
\cite[Conjecture~2(B)]{DeLoeraHawsKoppeEhrhart}.
Castillo and Liu formulated the broader conjecture that all integral
generalized permutohedra are Ehrhart positive
\cite{CastilloLiuPermutohedra}.
Ferroni disproved both conjectures by constructing matroids with
negative Ehrhart coefficients \cite{FerroniNotEhrhartPositive}.
Ehrhart positivity has nevertheless been established for several
classes of matroids. Ferroni proved positivity for hypersimplices,
equivalently for uniform matroids \cite{FerroniHypersimplices},
and later for minimal matroids \cite{FerroniMinimal}.
Ferroni, Jochemko, and Schr\"oter proved positivity for all
matroids of rank two \cite{FerroniJochemkoSchroterRankTwo}.
Hanely et al.\ derived Ehrhart polynomial formulas for paving and
panhandle matroids \cite{HanelyPavingPanhandle}.
Deligeorgaki, McGinnis, and Vindas-Mel\'endez subsequently proved
positivity for all panhandle matroids
\cite{DeligeorgakiMcGinnisVindasPanhandle}.
Fan and Li proved positivity for several families of Schubert matroids
\cite{FanLiSchubert}, and Chen, Li, and Yao proved Ehrhart positivity for Catalan matroids
\cite{ChenLiYaoCatalan}. Ferroni, Morales, and Panova proved positivity
for snake matroids \cite{FerroniMoralesPanovaSkew} and, more recently,
for all lattice path matroids
\cite[Theorem~1.2]{FerroniMoralesPanovaLPM}.

The totally nonnegative Grassmannian \(\operatorname{Gr}_{\ge0}(k,n)\)
consists of the \(k\)-dimensional subspaces of \(\mathbb R^n\) whose
Pl\"ucker coordinates can be chosen nonnegative.
Postnikov decomposed this space into cells by prescribing which
Pl\"ucker coordinates vanish; the matroids indexing these cells are
called \emph{positroids} \cite[Section~3]{PostnikovPositiveGrassmannian}.
Ferroni, Jochemko, and Schr\"oter conjectured that all positroids
are Ehrhart positive \cite[Conjecture~6.3]{FerroniJochemkoSchroterRankTwo}.
Both lattice path
matroids and series-parallel matroids belong to this class, and
their common connected members with at least two elements are
precisely the snake matroids
\cite[Section~1.1]{FerroniMoralesPanovaLPM}.
A connected series-parallel matroid with at least two elements is
obtained from \(U_{1,2}\), the rank-one uniform matroid on two elements,
by a sequence of series and parallel extensions.

A connected matroid with at least two elements is series-parallel if
and only if its Crapo beta invariant equals one
\cite[Proposition~2.1]{FerroniLarsonBraid}.
Series-parallel matroids play a structural role in the geometry of
positroids. Speyer and Williams proved that every maximal cell in a
finest regular subdivision of a hypersimplex into positroid base
polytopes is a series-parallel matroid base polytope
\cite[Theorem~6.6]{SpeyerWilliamsPositiveDressian}.
Ferroni, Morales, and Panova posed Ehrhart positivity for
series-parallel matroids in \cite{FerroniMoralesPanovaSkew}.
We resolve their question.

\begin{theorem}\label{thm:main-introduction}
Series-parallel matroids are Ehrhart positive.
\end{theorem}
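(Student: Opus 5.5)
We first reduce to the connected case. The base polytope of a direct sum is the product of the base polytopes, so \(i(M_1\oplus M_2,t)=i(M_1,t)\,i(M_2,t)\), and products of polynomials with nonnegative coefficients again have nonnegative coefficients. Loops and coloops contribute a factor \(1\). It therefore suffices to treat a connected series-parallel matroid \(M\) with at least two elements. Such an \(M\) is built from \(U_{1,2}\) by a sequence of series and parallel extensions. Duality maps \(\mathcal P(M)\) to \(\mathcal P(M^*)\) by the unimodular map \(x\mapsto\mathbf 1-x\), and it exchanges series and parallel extensions. Hence, up to duality, we only need to understand one type of extension, applied on top of a matroid that is itself series-parallel.

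The plan is to describe lattice points of \(t\mathcal P(M)\) recursively. A lattice point of \(t\mathcal P(M)\) is an integer vector \(x\in\mathbb Z_{\ge0}^E\) with \(x(E)=t\,r(E)\) and \(x(A)\le t\,r(A)\) for all \(A\subseteq E\). Suppose \(M'\) is obtained from \(M\) by adding an element \(f\) in parallel with \(e\). Then the constraints for \(M'\) involve \(x_e+x_f\) only through the sum, together with \(x_e,x_f\ge0\). This holds because every flat containing exactly one of \(e,f\) is dominated by the flat containing both. So we refine the count. For each \(e\) and each \(s\), let \(N_M(t;e,s)\) be the number of lattice points of \(t\mathcal P(M)\) with \(x_e=s\), and form the weighted count \(\sum_s w(s)N_M(t;e,s)\). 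A parallel extension at \(e\) replaces the weight \(w(s)\) by \(\sum_{a+b=s}w(\cdot)\), which is essentially the weight \(s+1\). Iterated extensions at various elements require tracking a joint distribution, and this is where the tree structure enters. We organise \(M\) by its series-parallel decomposition tree. Each internal node of the tree is an \(S\)- or \(P\)-node, and we attach to each subtree a polynomial in \(t\) and the load \(s\) on the edge to its parent, the load polynomial. The leaves start from \(U_{1,2}\), where the count with \(x_e=s\) is \(1\) for \(0\le s\le t\). A parallel (resp.\ series) composition corresponds to a convolution in the load variable, with complementation \(s\mapsto t-s\) at series nodes coming from duality. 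The final Ehrhart polynomial is obtained by summing the root load polynomial over \(s\).

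The key step is to identify a cone of polynomials, stable under these operations, whose members have nonnegative \(t\)-coefficients after summation. Natural candidates are the nonnegative combinations of products \(\binom{s+a}{a}\binom{t-s+b}{b}\) with extra factors \(\binom{t+c}{c}\), or an expansion in the basis \(t^j\) with coefficients that are polynomials in \(s\) with nonnegative coefficients in a shifted basis. We then check two things: the convolution \(\sum_{s_1+s_2=s}\) preserves the cone (a Vandermonde-type identity), and the complement \(s\mapsto t-s\) preserves it. Finally, \(\sum_{s=0}^t\binom{s+a}{a}\binom{t-s+b}{b}=\binom{t+a+b+1}{a+b+1}\), which is not itself positive in \(t\). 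So we must choose the cone so that the final sums land in polynomials like \(\prod(t+k)\), which do have positive coefficients.

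Finding the right invariant class is the main obstacle. A naive binomial-product basis fails, because \(\binom{t+n}{n}\)-type sums are positive, but signed differences arising from truncation at \(s\le t\) are not. I would first compute small cases, such as the fan \(U_{1,2}\to U_{2,3}\to\) the graphic matroid of \(K_4\) minus an edge, to guess the cone. I would then try to prove closure by induction on the decomposition tree, using the generating function in \(s\) and verifying the closure identities coefficientwise.
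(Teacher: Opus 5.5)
Your reduction to connected matroids with at least two elements and your overall architecture are right and agree with the paper. You run a recursion over the series-parallel tree in a load variable $s$ (the paper uses the pair $(a,b)=(s,t-s)$). Parallel nodes act by a convolution, and series nodes act by the same convolution conjugated by $s\mapsto t-s$; compare Lemma~\ref{lem:load-recursion} and $\text{\upshape S}(G,H)=\iota\,\text{\upshape P}(\iota G,\iota H)$ in \eqref{eq:7.2}. Duality does not let you restrict to one type of extension, since a tree mixes both; what you actually need is a cone invariant under $a\leftrightarrow b$.

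The proposal stops exactly where the proof begins. You never fix an invariant cone, and you never prove that any cone is closed under the parallel convolution. That closure is the entire content of the theorem.

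Two of your diagnostics also point the wrong way. First, $\binom{t+a+b+1}{a+b+1}=\frac{1}{(a+b+1)!}\prod_{k=1}^{a+b+1}(t+k)$ does have positive coefficients in $t$, so the final summation is harmless. Second, binomial products do work once powers of $t=a+b$ are allowed. The paper's cone $\mathcal C_{a,b}$ of nonnegative combinations of $(a+b)^k\binom{a+c}{c}\binom{b+d}{d}$ is closed under both convolutions (Lemma~\ref{lem:convolution-closure}). Evaluating at $(t,0)$ or $(0,t)$ then yields nonnegative combinations of $t^k\binom{t+c}{c}$. The factors $(a+b)^k$ are genuinely needed. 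For example, $\text{\upshape P}(\text{\upshape S}(1,1),1)=\binom{a+2}{2}+(a+1)b$ is not a nonnegative combination of products $\binom{a+c}{c}\binom{b+d}{d}$, but it equals $\tfrac12(a+b)(a+1)+\tfrac12(a+1)(b+1)+\tfrac12(a+1)$.

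The missing step is the closure proof, and it is not a Vandermonde-type identity. Convolving two generators under $\text{\upshape P}$ gives
\[
\sum_{\zeta=0}^{a}\binom{\zeta+c}{c}\binom{a-\zeta+e}{e}\binom{t-\zeta+d}{d}\binom{b+\zeta+f}{f}.
\]
Vandermonde handles the first two factors, but the last two also move with $\zeta$, and a nonnegative expansion is far from evident.

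The paper proceeds in four moves:
\begin{itemize}
\item It writes the first pair as the beta moment $\binom{a+c+e+1}{c+e+1}\binom{a}{\zeta}\int_0^1u^\zeta(1-u)^{a-\zeta}\,\mathrm d\mu_{c,e}(u)$.
\item It writes the second pair as $[x^dy^f](1-x)^{-(t-\zeta+1)}(1-y)^{-(b+\zeta+1)}$ and sums over $\zeta$ by the binomial theorem.
\item It factors the integrand as $R(x,y)^{t+1}(1-ux-vy)^{-b-1}$, with $R=\frac{u}{1-y}+\frac{v}{1-x}$ and $v=1-u$, as in \eqref{eq:6.5}.
\item Expanding $(1-ux-vy)^{-b-1}$ produces the factors $\binom{b+j}{j}$.
\end{itemize}
Everything then hinges on Lemma~\ref{lem:positive-kernel}: for every $u\in[0,1]$, each coefficient $[x^my^n]R^{t+1}$ is a polynomial in $t$ with nonnegative coefficients. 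This is proved from the exponential form of $R(x,0)^{t+1}$ and the recurrence obtained from $\partial_x\partial_yR^{t+1}=t(t+1)uv\,R^{t+1}/(1-ux-vy)^2$.

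Guessing a cone from small cases and checking closure coefficientwise, as you propose, supplies neither this kernel positivity nor a substitute for it. So the proposal does not yet prove the theorem.
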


For a connected series-parallel matroid \(M\) with at least two
elements, Corollary~\ref{prop:marked-cycles} expresses each Ehrhart
coefficient as a sum of nonnegative rational weights over permutations
with marked cycles. A positive summand occurs in every degree
\(0\le h\le\dim\mathcal P(M)\), yielding strict positivity
(Corollary~\ref{thm:strict-root}).

We consider lattice-point counts weighted by products of binomial
polynomials. For a matroid \(M\) on \([n]\) and two vectors
\(\mathbf c,\mathbf d\in\mathbb Z_{\ge0}^n\), define
\begin{equation}
W_{M;\mathbf c,\mathbf d}(t)
=\sum_{x\in t\mathcal P(M)\cap\mathbb Z^n}
\prod_{i=1}^n\binom{x_i+c_i}{c_i}
              \binom{t-x_i+d_i}{d_i},
\qquad t\in\mathbb Z_{\ge0}.
\label{eq:atomic-weighted-count}
\end{equation}
The weighted count \(W_{M;\mathbf c,\mathbf d}(t)\) is a polynomial
by weighted Ehrhart theory; see Section~\ref{subsec:tree-substitution}.

\begin{lemma}\label{lem:extension-criterion}
Let \(M\) be a connected matroid on \([n]\) with \(n\ge2\).
Assume that \(W_{M;\mathbf c,\mathbf d}\) has nonnegative coefficients
for all \(\mathbf c,\mathbf d\in\mathbb Z_{\ge0}^n\).
Then every matroid obtained from \(M\) by
finitely many series and parallel extensions, including zero
extensions, is Ehrhart positive.
\end{lemma}

We use Lemma~\ref{lem:extension-criterion} to establish Ehrhart
positivity for a larger class of matroids. Multi-path matroids,
introduced by Bonin and Gim\'enez \cite{BoninGimenezMultiPath},
generalize lattice path matroids by allowing multiple admissible
pairs of starting and ending points between fixed boundaries;
see Section~\ref{subsec:multipath-background}.
Let \(\mathrm{MPM}\) denote the class of multi-path matroids and
\(\mathrm{SPEX}(\mathrm{MPM})\) the class obtained from them by
finitely many series and parallel extensions, including zero
extensions and relabeling. Theorem~\ref{thm:weighted-mpm} verifies
the hypothesis of Lemma~\ref{lem:extension-criterion} for every
connected multi-path matroid with at least two elements, yielding
the following result.

\begin{theorem}\label{cor:spex-mpm}\label{thm:spex-mpm}
Every matroid in \(\mathrm{SPEX}(\mathrm{MPM})\) is Ehrhart positive.
\end{theorem}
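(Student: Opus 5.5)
The plan is to derive Theorem~\ref{thm:spex-mpm} from Lemma~\ref{lem:extension-criterion} together with Theorem~\ref{thm:weighted-mpm}, after reducing to the connected case. A matroid \(N\in\mathrm{SPEX}(\mathrm{MPM})\) is obtained from some multi-path matroid \(M\) by finitely many series and parallel extensions and a relabeling. Relabeling does not change the Ehrhart polynomial. So it suffices to treat \(N\) obtained from \(M\) on \([n]\) by extensions alone.

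\emph{Reduction to connected components.} The base polytope of a direct sum is the product of the base polytopes of the summands, so \(i(M_1\oplus M_2,t)=i(M_1,t)\,i(M_2,t)\). A product of polynomials with nonnegative coefficients again has nonnegative coefficients. A series or parallel extension of an element \(e\) acts only on the connected component containing \(e\), and extending a coloop in series or a loop in parallel produces a new component. I will therefore track components: each component of \(N\) is either
\begin{enumerate}
\item a single-element matroid, a loop or a coloop, whose polytope is a point with \(i=1\); or
\item a connected matroid with at least two elements obtained by series and parallel extensions from either a connected component of \(M\) or from \(U_{1,2}\).
\end{enumerate}
The case \(U_{1,2}\) arises when a loop or coloop is extended: a loop extended in series, or a coloop extended in parallel, gives \(U_{1,2}\). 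Extending a loop in parallel, or a coloop in series, just adds another loop or coloop.

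I must check that the class of multi-path matroids is closed under taking connected components, or at least that each connected component of a multi-path matroid is again a connected multi-path matroid. This should follow from the Bonin--Gim\'enez description recalled in Section~\ref{subsec:multipath-background}, since components of path-type structures correspond to blocks between boundary contacts. I also need \(U_{1,2}\) to be a connected multi-path matroid; it is a lattice path matroid.

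\emph{Main step.} For each component \(K\) that is a connected multi-path matroid on at least two elements, Theorem~\ref{thm:weighted-mpm} gives that \(W_{K;\mathbf c,\mathbf d}\) has nonnegative coefficients for all \(\mathbf c,\mathbf d\). Lemma~\ref{lem:extension-criterion} then shows that every iterated series-parallel extension of \(K\) is Ehrhart positive. Multiplying over the components yields positivity of \(N\).

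The main obstacle is the component-closure step for multi-path matroids. If it fails in general, I would fall back on proving directly that the weighted counts factor over direct sums: \(W_{M_1\oplus M_2;\mathbf c,\mathbf d}=W_{M_1}\,W_{M_2}\), since both the polytope and the weights are products. I would then apply the mechanism behind Lemma~\ref{lem:extension-criterion} componentwise, which requires only Theorem~\ref{thm:weighted-mpm} for whatever connected pieces appear.
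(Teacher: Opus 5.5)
Your outline is correct and has the same skeleton as the paper's proof. Both reduce to connected components by multiplicativity and apply Theorem~\ref{thm:weighted-mpm} together with Lemma~\ref{lem:extension-criterion} to every connected multi-path component on at least two elements.

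The step you flag as the main obstacle closes in one line, and this is how the paper does it. A connected component \(C\) of \(M\) is the restriction \(M\mathbin{\backslash}([n]\mathbin{\backslash}C)\), hence a minor, and multi-path matroids are closed under minors \cite[Theorem~3.8]{BoninGimenezMultiPath}. Your ``blocks between boundary contacts'' justification is only sketched, is not immediate from Definition~\ref{def:multipath} when \(w>0\), and is not needed. Your fallback would not remove the issue. Factoring \(W\) over a direct sum still requires Theorem~\ref{thm:weighted-mpm} for each connected piece, and that theorem applies only once the piece is known to be a multi-path matroid.

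Where you genuinely differ is in handling loops and coloops.
\begin{itemize}
\item The paper records all extensions of a single loop or coloop by one \text{\upshape S-P} tree \(T\). The recursion \eqref{eq:state-vector-recursion} then shows the resulting base polytope is \(\mathcal P_T^0\) or \(\mathcal P_T^1\). Its Ehrhart polynomial is \(K_T(0,t)\) or \(K_T(t,0)\), which has nonnegative coefficients by Theorem~\ref{thm:positive-expansion}. This needs no bookkeeping of how components split.
\item You instead track components through each extension. A loop extended in parallel, or a coloop in series, only adds a one-element component. A loop extended in series, or a coloop in parallel, creates \(U_{1,2}\), which stays connected under further extensions. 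You then cover the descendants of \(U_{1,2}\) by the same criterion, since \(U_{1,2}=M(L,U)\) with \(L=(0,0,1)\) and \(U=(0,1,1)\) is a connected lattice path matroid. Alternatively, Theorem~\ref{thm:main-introduction} covers them directly.
\end{itemize}
Both routes are valid. Yours keeps the argument inside Lemma~\ref{lem:extension-criterion} at the cost of a small case analysis. The paper's reuses the state polytopes to handle an arbitrary extension tree of a loop or coloop at once.
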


\begin{remark}\label{rem:main-special-cases}
\begin{enumerate}[label=\textup{(\roman*)}]
\item Theorems~\ref{thm:main-introduction} and~\ref{thm:spex-mpm}
recover Ehrhart positivity for snake matroids and lattice path
matroids, respectively. Our proofs differ from the proofs using
order polynomials of fence posets in
\cite[Corollary~5.6]{FerroniMoralesPanovaSkew} and
\cite[Theorem~1.2]{FerroniMoralesPanovaLPM}.
\item Multi-path matroids are positroids
\cite[Section~2.6]{BoninCharacterizationPositroids}, and positroids are closed
under series and parallel extensions
\cite[Corollary~4.18]{BoninCharacterizationPositroids}.
Thus \(\mathrm{SPEX}(\mathrm{MPM})\) consists of positroids, so
Theorem~\ref{thm:spex-mpm} supports the conjecture that all
positroids are Ehrhart positive
\cite[Conjecture~6.3]{FerroniJochemkoSchroterRankTwo}.
\end{enumerate}
\end{remark}

Multi-path matroids belong to the class of \emph{transversal matroids}
\cite[Section~2]{BoninGimenezMultiPath}.
A transversal matroid can be represented by a bipartite graph:
its ground set is one vertex class, and a subset is independent
precisely when its vertices can be matched to distinct vertices
in the other class.

The base polytopes of positroids and transversal matroids admit
subdivisions into series-parallel matroid base polytopes
\cite[Section~4.4]{FerroniMoralesPanovaLPM}.
Proposition~\ref{prop:transversal-counterexample} constructs a
connected transversal matroid with a negative Ehrhart coefficient.
This disproves the conjecture of Ferroni, Morales, and Panova
\cite[Conjecture~4.7]{FerroniMoralesPanovaLPM}.
Figure~\ref{fig:class-inclusions} summarizes the classes considered
in this paper.

\begin{figure}[!ht]
\centering
\begin{tikzpicture}[
  family/.style={draw,rectangle,align=center,font=\small,
    minimum height=9mm,inner xsep=7pt,inner ysep=5pt},
  established/.style={family,ellipse},
  conjectural/.style={family,diamond,aspect=2.2,inner xsep=3pt,inner ysep=3pt},
  positive/.style={family,rectangle,line width=1pt},
  counterexample/.style={family,rectangle,densely dotted,line width=0.8pt},
  inclusion/.style={->,line width=0.6pt,>=stealth},
  every node/.style={font=\small}
]
\node[font=\small\itshape] at (-0.15,9.5) {Matching polytopes};
\node[font=\small\itshape] at (7.1,9.5) {Matroids};
\node[counterexample,text width=71mm] (subdiv) at (7.1,8)
  {Matroids that admit subdivisions\\into series-parallel matroids\\
   {\scriptsize\bfseries Not Ehrhart positive}\\
   {\scriptsize Proposition~\ref{prop:transversal-counterexample}}};
\node[conjectural,text width=19mm] (pos) at (4.6,5.6)
  {Positroids};
\node[counterexample,text width=38mm] (trans) at (9.6,5.6)
  {Transversal matroids\\
   {\scriptsize\bfseries Not Ehrhart positive}\\
   {\scriptsize Proposition~\ref{prop:transversal-counterexample}}};
\node[positive,text width=30mm] (spex) at (4.6,3.8)
  {\(\mathrm{SPEX}(\mathrm{MPM})\)\\
   \scriptsize Theorem~\ref{thm:spex-mpm}};
\node[positive,text width=31mm] (mpm) at (9.6,3.35)
  {Multi-path matroids};
\node[positive,text width=30mm] (sp) at (4.6,1.65)
  {Series-parallel matroids\\
   \scriptsize Theorem~\ref{thm:main-introduction}};
\node[established,text width=28mm] (lpm) at (9.6,1.65)
  {Lattice path matroids};
\node[established,text width=25mm] (snake) at (7.1,0)
  {Snake matroids};
\node[conjectural,aspect=1.4,minimum width=35mm,minimum height=25mm,
  inner sep=0pt] (matching) at (-0.15,5.6) {};
\node[align=center,font=\footnotesize,inner sep=0pt,yshift=0.5mm] at (matching.center)
  {Matching\\[-2pt]polytopes of finite\\[-2pt]simple graphs\\[-2pt]
   \scriptsize Conjecture~\ref{conj:matching-ehrhart-positive}};
\node[positive,text width=35mm] (forest) at (-0.15,1.65)
  {Forest matching\\polytopes\\
   \scriptsize Corollary~\ref{cor:forest-matching-positive}};
\draw[inclusion] (snake) -- (sp);
\draw[inclusion] (snake) -- (lpm);
\draw[inclusion] (sp) -- (spex);
\draw[inclusion] (lpm) -- (mpm);
\draw[inclusion] (mpm) -- (spex);
\draw[inclusion] (spex) -- (pos);
\draw[inclusion] (mpm) -- (trans);
\draw[inclusion] (pos) -- (subdiv);
\draw[inclusion] (trans) -- (subdiv);
\draw[inclusion] (forest) -- (matching);
\draw[inclusion,dashed] (forest) -- (sp);
\end{tikzpicture}
\caption{Solid arrows denote inclusions, with matroids considered up to
isomorphism. The dashed arrow means that each forest matching polytope
is integrally affinely isomorphic to the base polytope of a
series-parallel matroid; see Section~\ref{sec:matching-polytopes}.
Ellipses denote classes whose Ehrhart positivity was previously known;
diamonds denote classes whose Ehrhart positivity remains conjectural.
Rectangles denote results of this paper: thick solid borders indicate
Ehrhart positivity, and dotted borders indicate classes containing
counterexamples to Ehrhart positivity.}
\label{fig:class-inclusions}
\end{figure}

Section~\ref{subsec:algebraic-observation} outlines the algebraic
method behind our results. A class of polynomials with nonnegative
coefficients is shown to be closed under series and parallel
convolutions, providing a common argument for
Theorem~\ref{thm:main-introduction} and the extension criterion in
Lemma~\ref{lem:extension-criterion}.

The paper is organized as follows. Section~\ref{sec:preliminaries}
recalls matroid base polytopes and multi-path matroids.
Section~\ref{sec:series-parallel-trees} describes series-parallel
constructions by rooted trees and identifies their root polytopes
as matroid base polytopes. Section~\ref{sec:tree-counts} introduces the load polytopes and
derives the lattice-point recursions.
Section~\ref{sec:positive-expansions} proves the nonnegative expansion,
gives the marked-cycle formula for the Ehrhart coefficients,
and uses it to establish strict positivity. Section~\ref{sec:spex-mpm} proves the general criterion
in Lemma~\ref{lem:extension-criterion}. Weighted path counts and
a partition determined by prefix minima verify its hypothesis for
multi-path matroids, yielding the series-parallel extension result.
Section~\ref{sec:transversal-counterexample} gives a transversal
counterexample to the conjecture of Ferroni, Morales, and Panova.
Section~\ref{sec:matching-polytopes} proves Ehrhart positivity for
matching polytopes of forests and records further positive families
and a conjecture for arbitrary graphs.

\subsection{An algebraic observation on series-parallel extensions}
\label{subsec:algebraic-observation}

Ehrhart polynomials are multiplicative under direct sums. Since a
one-element matroid has Ehrhart polynomial \(1\), it suffices to prove
Theorem~\ref{thm:main-introduction} for connected series-parallel
matroids with at least two elements.
We encode a series-parallel construction by a tree rooted at the
\(\text{\upshape P}\)-labeled vertex of the initial two-leaf representation of
\(U_{1,2}\), where \(\text{\upshape P}\) denotes parallel composition;
see Section~\ref{subsec:sp-extensions}.
The tree determines lattice-point recursions in two parameters
\(a,b\), with \(a+b=t\).
By Theorem~\ref{thm:positive-expansion}, the resulting counting
polynomials are finite nonnegative rational linear combinations of
\begin{equation}
(a+b)^k\binom{a+c}{c}\binom{b+d}{d},
\qquad c,d,k\in\mathbb Z_{\ge0}.
\label{eq:intro-binomial-terms}
\end{equation}
The polynomials generated by \eqref{eq:intro-binomial-terms} with
nonnegative rational coefficients form a class closed under
parallel and series convolutions.
For parallel convolution, a beta integral representation and a
factorization of the integrand reduce closure to nonnegativity
of kernel coefficients as polynomials in \(t=a+b\).
A recurrence derived from a differential identity proves that the
kernel coefficients are nonnegative for every value of the
integration parameter in \([0,1]\).
Interchanging \(a,b\) gives closure under series convolution.

Evaluating the expansion of \(K_T(a,b)\) at \((t,0)\) or \((0,t)\)
proves Theorem~\ref{thm:main-introduction}.
Tracking which terms in \eqref{eq:intro-binomial-terms} occur with
positive weights further establishes strict positivity of the Ehrhart
coefficients in Section~\ref{sec:coefficients-examples}.
Under \((a,b)=(x_i,t-x_i)\), the factor
\(\binom{a+c}{c}\binom{b+d}{d}\) becomes
\(\binom{x_i+c}{c}\binom{t-x_i+d}{d}\), the weight for coordinate
\(i\) in \eqref{eq:atomic-weighted-count}.
Lemma~\ref{lem:single-extension-count} shows that summing over
the two coordinates introduced by one extension merges their
weights by a parallel or series convolution. Starting with weight
\(1\) on every final element and merging weights in reverse order
keeps every weight a nonnegative rational linear combination of
\eqref{eq:intro-binomial-terms}, by Lemma~\ref{lem:convolution-closure}.
Expanding the resulting weights on \(M\) expresses the Ehrhart
polynomial of the extension as a nonnegative rational linear
combination of \(t^kW_{M;\mathbf c,\mathbf d}(t)\).
If \(W_{M;\mathbf c,\mathbf d}\) has nonnegative coefficients for
every \(\mathbf c,\mathbf d\), then the extension is Ehrhart positive,
proving Lemma~\ref{lem:extension-criterion}.
Corollary~\ref{prop:marked-cycles} gives a weighted interpretation
of the Ehrhart coefficients in terms of permutations with marked cycles.
\FloatBarrier

\section{Preliminaries}\label{sec:preliminaries}

We recall basic operations on matroids, the corresponding properties
of their base polytopes, and the path representation of multi-path matroids.

\subsection{Matroid base polytopes}\label{subsec:direct-sums}

Let \(M\) be a matroid on a finite set \(E\).
An element of \(E\) is a \emph{loop} if it belongs to no basis,
and a \emph{coloop} if it belongs to every basis.
A \emph{circuit} is an inclusion-minimal subset of \(E\)
not contained in any basis, and a \emph{cocircuit} is an
inclusion-minimal subset meeting every basis.
For \(A\subseteq E\), the rank \(r_M(A)\) is the maximum of
\(|A\cap B|\) over all bases \(B\) of \(M\), and
\(\operatorname{rk}M=r_M(E)\).

For \(e\in E\), the \emph{deletion} \(M\mathbin{\backslash} e\) and the
\emph{contraction} \(M\mathbin{/}e\) are the matroids on \(E\mathbin{\backslash}\{e\}\)
with rank functions
\[
r_{M\mathbin{\backslash} e}(A)=r_M(A),\qquad
r_{M\mathbin{/}e}(A)=r_M(A\cup\{e\})-r_M(\{e\})
\]
for \(A\subseteq E\mathbin{\backslash}\{e\}\).
If \(e\) is not a coloop, the bases of \(M\mathbin{\backslash} e\) are the
bases of \(M\) avoiding \(e\). If \(e\) is not a loop, the bases
of \(M\mathbin{/}e\) are the sets \(B\mathbin{\backslash}\{e\}\), where \(B\) is a
basis of \(M\) containing \(e\).

The base polytope \(\mathcal P(M)\), defined in
\eqref{eq:matroid-base-polytope}, has the inequality description
\cite[Proposition~2.3]{FeichtnerSturmfelsBergman}
\begin{equation}
\mathcal P(M)=\Bigl\{x\in\mathbb R_{\ge0}^E:
\textstyle\sum_{e\in E}x_e=r_M(E),\quad
\sum_{e\in A}x_e\le r_M(A)\text{ for all }A\subseteq E\Bigr\}.
\label{eq:matroid-base-inequalities}
\end{equation}
Its vertices lie in \(\{0,1\}^E\), so it is a lattice polytope in
\(\mathbb Z^E\).

The bases of the dual matroid \(M^*\) are the complements in \(E\)
of the bases of \(M\). Since
\(\mathbf1_{E\mathbin{\backslash} B}=\mathbf1-\mathbf1_B\),
\[
\mathcal P(M^*)=\mathbf1-\mathcal P(M)
=\{\mathbf1-x:x\in\mathcal P(M)\},
\]
where \(\mathbf1\in\mathbb R^E\) is the all-ones vector;
see \cite[Remark~3.3]{FerroniNotEhrhartPositive}.

For matroids \(M_1,\ldots,M_r\) on pairwise disjoint ground sets,
the bases of their direct sum \(M=M_1\oplus\cdots\oplus M_r\)
are the unions \(B_1\cup\cdots\cup B_r\), where \(B_i\) is a
basis of \(M_i\) for each \(i\). Consequently,
\[
\mathcal P(M)=\mathcal P(M_1)\times\cdots\times\mathcal P(M_r).
\]
A matroid on a nonempty ground set is \emph{connected} if it is not
a direct sum of two matroids on nonempty ground sets. Every matroid
is the direct sum of its connected components. Let \(c(M)\) denote
the number of connected components of \(M\).
\begin{equation}
\dim\mathcal P(M)=|E|-c(M);
\label{eq:matroid-dimension}
\end{equation}
see \cite[Proposition~2.4]{FeichtnerSturmfelsBergman}.
In particular, a connected matroid on \(E\) has base-polytope
dimension \(|E|-1\).

\subsection{Multi-path matroids}\label{subsec:multipath-background}

We use the path representation of Bonin and Gim\'enez
\cite[Theorems~3.6 and~3.7]{BoninGimenezMultiPath}, written in terms
of prefix heights. Put \([n]=\{1,\ldots,n\}\).

\begin{definition}\label{def:multipath}
Let \(n\ge1\), \(0\le r\le n\), and \(w\ge0\) be integers.
Let \(L=(L_0,\ldots,L_n)\) and \(U=(U_0,\ldots,U_n)\) be
integer sequences satisfying
\begin{equation}
\begin{gathered}
L_0=0,\quad L_n=r,\qquad U_0=w,\quad U_n=w+r,\\
L_j-L_{j-1},\ U_j-U_{j-1}\in\{0,1\}\quad(1\le j\le n),
\qquad L_j\le U_j\quad(0\le j\le n).
\end{gathered}
\label{eq:multipath-data}
\end{equation}
The bases of \(N(L,U)\) are the \(r\)-element subsets \(B\subseteq[n]\)
for which some integer \(\theta\in\{0,\ldots,w\}\) satisfies
\begin{equation}
L_j\le |B\cap[j]|+\theta\le U_j\qquad(0\le j\le n),
\label{eq:multipath-bases}
\end{equation}
where \([0]=\varnothing\).
A \emph{multi-path matroid} is a matroid isomorphic to some \(N(L,U)\).
When \(w=0\), the matroid \(N(L,U)\) is a \emph{lattice path matroid},
which we denote by \(M(L,U)\). In this notation, \(L_j\) and
\(U_j\) are the heights of the lower and upper boundary paths
after \(j\) steps.
\end{definition}

The parameter \(\theta\) is the starting height of a path representing
\(B\). The \(j\)-th segment of the path is horizontal if
\(j\notin B\) and has slope one if \(j\in B\).
Different starting heights can represent the same basis.

For \(x\in\mathbb R^n\), put \(S_0(x)=0\) and
\(S_j(x)=x_1+\cdots+x_j\) for \(1\le j\le n\).
The path representation and the integrality of systems of
difference constraints give the following description of
the base polytope.

\begin{lemma}\label{lem:multipath-polytope}
For integer sequences \(L,U\) satisfying \eqref{eq:multipath-data}
and every real \(t\ge0\), the associated multi-path matroid
\(N(L,U)\) satisfies
\begin{equation}
t\mathcal P(N(L,U))=\left\{x\in[0,t]^n\,\middle|\,
\begin{aligned}
&S_n(x)=rt,\quad\text{and for some }\theta\in\mathbb R,\\
&L_jt\le S_j(x)+\theta\le U_jt\quad(0\le j\le n)
\end{aligned}\right\}.
\label{eq:multipath-polytope}
\end{equation}
\end{lemma}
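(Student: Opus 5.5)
Write \(Q_t\) for the set on the right of \eqref{eq:multipath-polytope}. We prove \(t\mathcal P(N(L,U))=Q_t\) by two inclusions. The case \(t=0\) is trivial: both sides are \(\{0\}\), since \(x\in[0,0]^n\). So assume \(t>0\) and rescale by \(1/t\). It then suffices to show \(\mathcal P(N(L,U))=Q_1\), because \(Q_t=tQ_1\): replace \(x\) by \(x/t\) and \(\theta\) by \(\theta/t\).

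\textbf{The inclusion \(\mathcal P\subseteq Q_1\).} Each basis \(B\) has some integer \(\theta\in\{0,\dots,w\}\) satisfying \eqref{eq:multipath-bases}. Since \(S_j(\mathbf1_B)=|B\cap[j]|\), the vector \(\mathbf1_B\) lies in \(Q_1\), with coordinate sum \(|B|=r\). The set \(Q_1\) is convex. It is the projection to \(x\) of the polyhedron in \((x,\theta)\) cut out by linear inequalities, and projections of convex sets are convex. Hence the convex hull \(\mathcal P\) of the vectors \(\mathbf1_B\) is contained in \(Q_1\).

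\textbf{The inclusion \(Q_1\subseteq\mathcal P\).} This is the substantive direction. The constraint at \(j=0\) reads \(0\le\theta\le w\), so \(\theta\) automatically lies in \([0,w]\). I would pass to the variables \(y_0=\theta\) and \(y_j=S_j(x)+\theta\). The defining system of \(Q_1\) then becomes
\[
0\le y_j-y_{j-1}\le 1,\qquad L_j\le y_j\le U_j,\qquad y_n-y_0=r.
\]
Every constraint is a bound on a single variable or on a difference of two variables, with integer right-hand sides. So the constraint matrix, viewed with a dummy variable fixed at \(0\), is the transpose of a network incidence matrix and is totally unimodular. The polyhedron \(R\subseteq\mathbb R^{n+1}\) of feasible \(y\) is bounded, because \(L_j\le y_j\le U_j\). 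Total unimodularity therefore makes \(R\) an integral polytope, and every point of \(R\) is a convex combination of integer points of \(R\).

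Now take an integer point \(y\in R\). Put \(\theta=y_0\in\{0,\dots,w\}\) and \(x_j=y_j-y_{j-1}\in\{0,1\}\). Then \(x=\mathbf1_B\) with \(|B|=y_n-y_0=r\), and \(B\) satisfies \eqref{eq:multipath-bases}, so \(B\) is a basis of \(N(L,U)\). The map \(y\mapsto x\) is linear, so it carries convex combinations to convex combinations. Given \(x\in Q_1\) with witness \(\theta\), the corresponding point \(y\) lies in \(R\). Writing \(y\) as a convex combination of integer points of \(R\) and applying the map writes \(x\) as a convex combination of basis indicators. Hence \(x\in\mathcal P\).

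\textbf{Main obstacle.} The only real step is the integrality claim. I would justify it in one of two ways. The first is to cite total unimodularity of matrices whose rows have the form \(e_j-e_{j-1}\) or \(\pm e_j\), with the equality \(y_n-y_0=r\) also of difference form. The second is to cite the standard fact that feasible systems of difference constraints with integer bounds define integral polyhedra. I would also check that the equality constraint does not break this, since it is again a difference constraint, and that boundedness holds, so that \(R\) is a polytope rather than merely an integral polyhedron.
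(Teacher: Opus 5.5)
Your proposal is correct and follows essentially the same route as the paper. Both use the substitution \(y_j=S_j(x)+\theta\) (the paper's \(z_j\)), total unimodularity of the resulting difference-constraint system with integer right-hand sides, and the map \(x_j=y_j-y_{j-1}\), which sends integral points to basis indicators with \(\theta=y_0\); your convexity argument for \(\mathcal P\subseteq Q_1\) is the paper's converse step written as an inclusion.
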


\begin{proof}
Both sides are \(\{0\}\) at \(t=0\); scaling reduces \(t>0\)
to \(t=1\). Set \(z_j=S_j(x)+\theta\). The right-hand side is
the image, under \(x_j=z_j-z_{j-1}\), of the polytope defined by
\[
L_j\le z_j\le U_j\ (0\le j\le n),\qquad
0\le z_j-z_{j-1}\le1\ (1\le j\le n),\qquad z_n-z_0=r.
\]
After replacing the equality \(z_n-z_0=r\) by two inequalities,
each row of the constraint matrix contains at most one \(+1\)
and at most one \(-1\), with all other entries zero. Hence the
matrix is totally unimodular. Since all right-hand sides are
integers, every vertex of the polytope is integral
\cite[Theorems~4.1 and~4.4]{SchrijverPolyhedralCombinatorics}.
The map \(x_j=z_j-z_{j-1}\) sends every integral vertex of
this polytope to a basis indicator in
\eqref{eq:multipath-bases}, with \(\theta=z_0\).
Conversely, for a basis indicator \(x\), choose an integer
\(\theta\) satisfying \eqref{eq:multipath-bases}; then
\(z_j=S_j(x)+\theta\) satisfies the displayed constraints.
Thus the image under \(x_j=z_j-z_{j-1}\) is
\(\mathcal P(N(L,U))\).
\end{proof}

When \(w=0\), the condition at \(j=0\) forces \(\theta=0\),
and \eqref{eq:multipath-polytope} reduces to the lattice path
matroid polytope description of Knauer, Mart\'inez-Sandoval,
and Ram\'irez Alfons\'in
\cite[Theorems~3.3 and~3.4]{KnauerMartinezSandovalRamirezAlfonsin2018}.

\begin{example}\label{ex:multipath-whirl}
Take
\[
L=(0,1,1,2,2,3,3),\qquad U=L+\mathbf1.
\]
By Definition~\ref{def:multipath}, a three-element subset
\(B\subseteq[6]\) is a basis if and only if there exists
\(\theta\in\{0,1\}\) for which all six inequalities hold:
\[
\begin{alignedat}{2}
1&\le |B\cap[1]|+\theta\le2,\qquad&
1&\le |B\cap[2]|+\theta\le2,\\
2&\le |B\cap[3]|+\theta\le3,\qquad&
2&\le |B\cap[4]|+\theta\le3,\\
3&\le |B\cap[5]|+\theta\le4,\qquad&
3&\le |B\cap[6]|+\theta\le4.
\end{alignedat}
\]
The resulting rank-three matroid on \([6]\) is the three-whirl
\(\mathcal W^3\); its nonbasis triples are
\(\{1,2,6\}\), \(\{2,3,4\}\), and \(\{4,5,6\}\).
This matroid is not a lattice path matroid.
Figure~\ref{fig:multipath-prefix} shows the two boundary sequences.
The basis \(\{1,3,5\}\) is represented at both starting heights
\(\theta=0\) and \(\theta=1\).
\begin{figure}[htbp]
\centering
\begin{tikzpicture}[x=0.85cm,y=0.65cm,>=stealth]
\fill[black!7] (0,0)--(1,1)--(2,1)--(3,2)--(4,2)--(5,3)--(6,3)
 --(6,4)--(5,4)--(4,3)--(3,3)--(2,2)--(1,2)--(0,1)--cycle;
\draw[->] (-0.35,0)--(6.65,0) node[right] {\(j\)};
\draw[->] (0,-0.3)--(0,4.65) node[above] {height};
\draw[thick] (0,0)--(1,1)--(2,1)--(3,2)--(4,2)--(5,3)--(6,3);
\draw[thick,dashed] (0,1)--(1,2)--(2,2)--(3,3)--(4,3)--(5,4)--(6,4);
\foreach \j in {1,...,6} {\draw (\j,0.06)--(\j,-0.06) node[below] {\(\j\)};}
\foreach \h in {0,...,4} {\node[left] at (0,\h) {\(\h\)};}
\foreach \point in {(0,0),(0,1),(6,3),(6,4)} {\fill \point circle (1.5pt);}
\node[right] at (6,3) {\(L\)};
\node[right] at (6,4) {\(U\)};
\end{tikzpicture}
\caption{The prefix-height representation in Example~\ref{ex:multipath-whirl}.}
\label{fig:multipath-prefix}
\end{figure}
\end{example}

\FloatBarrier

\section{\texorpdfstring{\text{\upshape S-P}}{S-P} trees}
\label{sec:series-parallel-trees}

We encode series and parallel extensions by finite rooted trees
whose leaves have pairwise distinct labels. Each internal vertex
has at least two unordered children and is labeled \(\text{\upshape P}\) or \(\text{\upshape S}\);
leaves carry no \(\text{\upshape P}\)- or \(\text{\upshape S}\)-label. We call such a tree an
\emph{\text{\upshape S-P} tree}. We describe basis indicators recursively on \text{\upshape S-P}
trees and identify their root polytopes as base polytopes of
connected series-parallel matroids.

\subsection{Series-parallel extensions and state vectors}\label{subsec:sp-extensions}\label{subsec:construction-trees}\label{ux6811ux7684ux53c2ux6570ux548cux4e24ux4e2aux6761ux4ef6ux72b6ux6001}

Two distinct elements are \emph{parallel} if they form a two-element
circuit and are \emph{in series} if they form a two-element cocircuit.
A parallel extension adjoins an element parallel to an existing one,
so deleting the new element recovers the old matroid. A series
extension is the dual operation: the new element is in series with
an existing one, and contracting it recovers the old matroid;
see \cite[Section~5, p.~1546]{SpeyerTropicalLinearSpaces}.

Let \(M\) be a connected matroid on \(E\) with at least two elements,
and fix \(e\in E\); then \(e\) is neither a loop nor a coloop.
Choose distinct new labels \(e_1,e_2\notin E\).
Relabel \(e\) as \(e_1\), adjoin \(e_2\) in parallel or in series
with it, and denote the resulting matroid by \(M'\). Its ground set is
\[
E'=(E\mathbin{\backslash}\{e\})\cup\{e_1,e_2\}.
\]
The bases of \(M'\) are given by the following table as \(B\) ranges
over the bases of \(M\), where an entry containing \(e_i\) is taken for
each \(i\in\{1,2\}\).
\begin{equation}
\begin{array}{c|cc}
\text{extension} & e\notin B & e\in B\\ \hline
\text{parallel} & B & (B\mathbin{\backslash}\{e\})\cup\{e_i\}\\[2pt]
\text{series} & B\cup\{e_i\} &
(B\mathbin{\backslash}\{e\})\cup\{e_1,e_2\}
\end{array}
\label{eq:extension-bases}
\end{equation}

We record a chosen sequence of series and parallel extensions by a
rooted binary tree. We begin with a \(\text{\upshape P}\)-labeled root and two leaves
representing \(U_{1,2}\). Each subsequent extension is recorded by replacing the
leaf corresponding to the extended element by a new internal vertex
with two leaf children. The new vertex is labeled \(\text{\upshape P}\) for a parallel
extension and \(\text{\upshape S}\) for a series extension. One leaf represents the
relabeled element, and the other represents the new element.
The final leaf labels form the ground set of the resulting matroid.
Figure~\ref{fig:construction-steps} illustrates two successive extensions.
The tree records a chosen construction and need not be unique.

\begin{figure}[ht]
\centering
\begin{tikzpicture}[
  internal/.style={circle,draw,thick,minimum size=7mm,inner sep=0pt},
  leaf/.style={draw,rounded corners=1pt,minimum width=9mm,
    minimum height=6mm,inner sep=1pt},
  every node/.style={font=\small}
]
\begin{scope}
  \node[internal] (r0) at (0,0) {\(\text{\upshape P}\)};
  \node[leaf] (e0) at (-0.8,-1.1) {\(e\)};
  \node[leaf] (f0) at (0.8,-1.1) {\(f\)};
  \draw (r0)--(e0) (r0)--(f0);
  \node at (0,-1.9) {\(U_{1,2}\)};
\end{scope}
\draw[->,thick] (1.15,0)--(3.65,0)
  node[midway,above=3pt,align=center,font=\scriptsize]
  {parallel extension\\at \(e\)};
\begin{scope}[xshift=48mm]
  \node[internal] (r1) at (0,0) {\(\text{\upshape P}\)};
  \node[internal] (p1) at (-0.8,-1.1) {\(\text{\upshape P}\)};
  \node[leaf] (f1) at (0.8,-1.1) {\(f\)};
  \node[leaf] (e11) at (-1.4,-2.2) {\(e_1\)};
  \node[leaf] (e12) at (-0.2,-2.2) {\(e_2\)};
  \draw (r1)--(p1) (r1)--(f1) (p1)--(e11) (p1)--(e12);
\end{scope}
\draw[->,thick] (5.95,0)--(8.65,0)
  node[midway,above=3pt,align=center,font=\scriptsize]
  {series extension\\at \(f\)};
\begin{scope}[xshift=98mm]
  \node[internal] (r2) at (0,0) {\(\text{\upshape P}\)};
  \node[internal] (p2) at (-1.1,-1.1) {\(\text{\upshape P}\)};
  \node[internal] (s2) at (1.1,-1.1) {\(\text{\upshape S}\)};
  \node[leaf] (e21) at (-1.7,-2.2) {\(e_1\)};
  \node[leaf] (e22) at (-0.55,-2.2) {\(e_2\)};
  \node[leaf] (f21) at (0.55,-2.2) {\(f_1\)};
  \node[leaf] (f22) at (1.7,-2.2) {\(f_2\)};
  \draw (r2)--(p2) (r2)--(s2)
    (p2)--(e21) (p2)--(e22) (s2)--(f21) (s2)--(f22);
\end{scope}
\end{tikzpicture}
\caption{A parallel extension at \(e\), followed by a series extension at \(f\).}
\label{fig:construction-steps}
\end{figure}

The construction above produces binary trees. We also allow vertices
with more than two children.

\begin{definition}\label{def:ps-tree}
For an \text{\upshape S-P} tree \(T\), the \emph{rooted subtree} \(T_v\) at a
vertex \(v\) consists of
\(v\), all its descendants, and all edges among these vertices.
Throughout this paper, \emph{subtree} always means a rooted subtree
of this form.

Suppose that the root has children \(v_1,\ldots,v_d\), and put
\(T_i=T_{v_i}\). We write \(T=\text{\upshape P}(T_1,\ldots,T_d)\) when the root is labeled
\(\text{\upshape P}\), and \(T=\text{\upshape S}(T_1,\ldots,T_d)\) when the root is labeled \(\text{\upshape S}\).
We write \(e\) for the tree consisting
of a single leaf labeled \(e\).
We write \(E(T)\) for the set of leaf labels and \(n_T=|E(T)|\)
for the number of leaves.
\end{definition}

\begin{definition}\label{def:tree-statistic}
For each \text{\upshape S-P} tree \(T\), define \(s(T)\in\mathbb Z_{\ge0}\) recursively by
\begin{equation}
\begin{aligned}
s(e)&=0,\\
s\bigl(\text{\upshape P}(T_1,\ldots,T_d)\bigr)&=\sum_{i=1}^{d}s(T_i),\\
s\bigl(\text{\upshape S}(T_1,\ldots,T_d)\bigr)&=\sum_{i=1}^{d}s(T_i)+d-1.
\end{aligned}
\label{eq:1.1}
\end{equation}
\end{definition}

\begin{example}\label{ex:tree-statistic}
Consider the tree \(T=\text{\upshape P}(T_1,T_2,T_3)\) in
Figure~\ref{fig:ps-tree}, where \(T_1=e_1\),
\(T_2=\text{\upshape S}(e_2,e_3)\), and \(T_3=\text{\upshape P}(e_4,e_5,e_6)\).
By \eqref{eq:1.1},
\[
s(T_1)=0,\qquad s(T_2)=1,\qquad s(T_3)=0
\quad\Longrightarrow\quad s(T)=1.
\]

\begin{figure}[ht]
\centering
\begin{tikzpicture}[
  level distance=13mm,
  level 1/.style={sibling distance=42mm},
  level 2/.style={sibling distance=14mm},
  internal/.style={circle,draw,thick,minimum size=7mm,inner sep=0pt},
  leaf/.style={draw,rounded corners=1pt,minimum width=10mm,
    minimum height=6mm,inner sep=1pt},
  edge from parent/.style={draw},
  every node/.style={font=\small}
]
\node[internal,label=above:{\scriptsize root}] (root) {\(\text{\upshape P}\)}
  child {node[leaf] (e1) {\(e_1\)}}
  child {node[internal] (s) {\(\text{\upshape S}\)}
    child {node[leaf] (e2) {\(e_2\)}}
    child {node[leaf] (e3) {\(e_3\)}}
  }
  child {node[internal] (p) {\(\text{\upshape P}\)}
    child {node[leaf] (e4) {\(e_4\)}}
    child {node[leaf] (e5) {\(e_5\)}}
    child {node[leaf] (e6) {\(e_6\)}}
  };
\node[
  draw=black,
  dashed,
  rounded corners=2pt,
  fit=(s)(e2)(e3),
  inner sep=3pt,
  label={[text=black,font=\small]below:{subtree \(T_2\)}}
] {};
\end{tikzpicture}
\caption{The tree \(T\) of Example~\ref{ex:tree-statistic}.
The dashed box indicates the rooted subtree \(T_2\).}
\label{fig:ps-tree}
\end{figure}
\end{example}

To describe bases recursively, we assign each vertex a \emph{state},
a value \(\varepsilon\in\{0,1\}\). If a subtree records extensions of
an element \(e\), its root state records whether \(e\) belongs to the
basis \(B\) before the extensions: \(\varepsilon=1\) when \(e\in B\),
and \(\varepsilon=0\) when \(e\notin B\).

\begin{definition}
\label{def:state-configurations}
For each \(\varepsilon\in\{0,1\}\), we recursively define a set
\(\mathcal B_T^\varepsilon\subseteq\{0,1\}^{E(T)}\), called the set of
state-\(\varepsilon\) vectors of \(T\). If \(T\) is the single leaf \(e\), set
\[
\mathcal B_e^\varepsilon
=\{x\in\{0,1\}^{\{e\}}:x_e=\varepsilon\}.
\]
Suppose that \(T\) has child subtrees \(T_1,\ldots,T_d\). Define
\begin{equation}
\mathcal B_T^\varepsilon=
\begin{cases}
\displaystyle
\bigcup_{\substack{(\varepsilon_1,\ldots,\varepsilon_d)\in\{0,1\}^d\\
\varepsilon_1+\cdots+\varepsilon_d=\varepsilon}}
\left\{x\in\{0,1\}^{E(T)}\,\middle|\,
\begin{array}{@{}l@{}}
x|_{E(T_i)}\in\mathcal B_{T_i}^{\varepsilon_i}\\
\text{for }i=1,2,\ldots,d
\end{array}\right\},
& T=\text{\upshape P}(T_1,\ldots,T_d),\\[3ex]
\displaystyle
\bigcup_{\substack{(\varepsilon_1,\ldots,\varepsilon_d)\in\{0,1\}^d\\
\varepsilon_1+\cdots+\varepsilon_d=d-1+\varepsilon}}
\left\{x\in\{0,1\}^{E(T)}\,\middle|\,
\begin{array}{@{}l@{}}
x|_{E(T_i)}\in\mathcal B_{T_i}^{\varepsilon_i}\\
\text{for }i=1,2,\ldots,d
\end{array}\right\},
& T=\text{\upshape S}(T_1,\ldots,T_d).
\end{cases}
\label{eq:state-vector-recursion}
\end{equation}
At a \(\text{\upshape P}\)-labeled root, state \(0\) requires all child states to be
\(0\), and state \(1\) requires exactly one child state to be \(1\).
At an \(\text{\upshape S}\)-labeled root, state \(0\) requires exactly one child state
to be \(0\), and state \(1\) requires all child states to be \(1\).

For each state tuple \((\varepsilon_1,\ldots,\varepsilon_d)\) satisfying
the corresponding sum condition in \eqref{eq:state-vector-recursion},
the restrictions \(x|_{E(T_i)}\in\mathcal B_{T_i}^{\varepsilon_i}\) may
be chosen independently for \(i=1,2,\ldots,d\).

For each \(\varepsilon\in\{0,1\}\), set
\[
\mathcal P_T^\varepsilon:=\operatorname{conv}(\mathcal B_T^\varepsilon)
\subseteq\mathbb R^{E(T)}.
\]
Define the \emph{root polytope} of \(T\) by
\[
\mathcal P(T):=
\begin{cases}
\mathcal P_T^1,&\text{if \(T\) is a leaf or its root is labeled }\text{\upshape P},\\
\mathcal P_T^0,&\text{if the root is labeled }\text{\upshape S}.
\end{cases}
\]
For the single-leaf tree \(e\), this gives
\(\mathcal P(e)=\{x:x_e=1\}\).
\end{definition}

For an \text{\upshape S-P} tree \(T\), we obtain a binary tree by combining the
child subtrees at each internal vertex two at a time, using the original
vertex's label for every binary combination. For example,
\[
\text{\upshape P}(T_1,T_2,T_3,T_4)\longmapsto \text{\upshape P}\bigl(\text{\upshape P}(T_1,T_2),\text{\upshape P}(T_3,T_4)\bigr).
\]
We call the resulting tree \(T'\) a \emph{binary bracketing} of \(T\).
Fix child states allowed by \eqref{eq:state-vector-recursion} at
an original vertex. At a \(\text{\upshape P}\)-vertex, assign each new intermediate
state the number of \(1\)'s among the original child states in its
group; at an \(\text{\upshape S}\)-vertex, assign it one minus the number of \(0\)'s
in its group. Each group contains at most one \(1\) in the
\(\text{\upshape P}\)-case or one \(0\) in the \(\text{\upshape S}\)-case, so every intermediate
state lies in \(\{0,1\}\). Conversely, eliminating the intermediate
states recovers \eqref{eq:state-vector-recursion} at the original vertex.
Thus
\[
\mathcal B_{T'}^\varepsilon=\mathcal B_T^\varepsilon
\qquad(\varepsilon\in\{0,1\}).
\]
An internal vertex with \(d\) children is replaced by \(d-1\) binary vertices
with the same label. By \eqref{eq:1.1}, the contribution to \(s\)
remains \(0\) for a \(\text{\upshape P}\)-vertex and \(d-1\) for an \(\text{\upshape S}\)-vertex.
Therefore \(s(T')=s(T)\). Since each binary \(\text{\upshape S}\)-vertex contributes
one, \(s(T')\) is the number of internal vertices of \(T'\) labeled
\(\text{\upshape S}\).

If \(T\) records a construction of \(M\) from \(U_{1,2}\), then
\(\mathcal P(T)=\mathcal P(M)\). In Section~\ref{subsec:matroid-polytopes}
we identify the matroid represented by \(\mathcal P(T)\) for arbitrary
\text{\upshape S-P} trees \(T\).

\subsection{Matroid base polytopes from \texorpdfstring{\text{\upshape S-P}}{S-P} trees}
\label{subsec:matroid-polytopes}\label{ux4e3aux4ec0ux4e48ux8986ux76d6ux5168ux90e8ux4e32ux5e76ux8054ux62dfux9635}\label{sec:9}

Given an \text{\upshape S-P} tree \(T\), we will realize \(\mathcal P_T^1\) and
\(\mathcal P_T^0\) as base polytopes of minors of a single matroid.
Choose an additional element \(e_0\notin E(T)\). We construct a
matroid \(\widehat M_T\) on \(E(T)\cup\{e_0\}\) whose deletion at
\(e_0\) has base polytope \(\mathcal P_T^1\) and whose contraction
at \(e_0\) has base polytope \(\mathcal P_T^0\).

Choose a binary bracketing \(T'\) of \(T\). We construct
\(\widehat M_T\) by performing the series and parallel extensions
prescribed by \(T'\), while leaving
\(e_0\) unchanged. Start with \(U_{1,2}\) on \(\{e_0,e\}\), and assign the temporary
element \(e\) to the root of \(T'\). Working from the root toward
the leaves, extend the element assigned to each internal vertex
according to its \(\text{\upshape P}\)- or \(\text{\upshape S}\)-label, and assign the two resulting elements
to its children. Give the final elements their leaf labels.

We identify the indicator vectors of the bases of \(\widehat M_T\) by
comparing the extension rules in \eqref{eq:extension-bases} with
the \(\text{\upshape P}\)- and \(\text{\upshape S}\)-state recursions in
\eqref{eq:state-vector-recursion}.
With the coordinate of the extended element as the parent state,
parallel and series extensions allow exactly the pairs of child
states in the binary \(\text{\upshape P}\)- and
\(\text{\upshape S}\)-recursions in \eqref{eq:state-vector-recursion}, respectively.

Since \eqref{eq:extension-bases} lists every basis after an extension,
every basis of \(\widehat M_T\) arises from one of the two initial bases.
Fix \(\varepsilon\in\{0,1\}\) and start with the basis of \(U_{1,2}\)
whose indicator vector is \((1-\varepsilon,\varepsilon)\), with the
\(e_0\)-coordinate first. At every extension, the two new coordinates
satisfy the child-state condition for the label of the extended vertex in
\eqref{eq:state-vector-recursion}. Hence the final leaf vector \(x\)
belongs to \(\mathcal B_{T'}^\varepsilon\). Since no extension changes
the \(e_0\)-coordinate, the full indicator vector is
\((1-\varepsilon,x)\).

Conversely, fix \(x\in\mathcal B_{T'}^\varepsilon\).
The recursive definition \eqref{eq:state-vector-recursion} provides
a choice of states at all vertices of \(T'\), with root state
\(\varepsilon\), leaf state \(x_e\) at each \(e\in E(T)\), and
child states satisfying the \(\text{\upshape P}\)- or \(\text{\upshape S}\)-sum condition at every
internal vertex. At each extension, \eqref{eq:extension-bases}
allows a new basis whose two new coordinates equal the prescribed
child states. Starting from \((1-\varepsilon,\varepsilon)\) and
performing the extensions from the root to the leaves therefore
produces a basis with indicator vector \((1-\varepsilon,x)\).

Taking \(\varepsilon=1,0\) and using
\(\mathcal B_{T'}^\varepsilon=\mathcal B_T^\varepsilon\), we conclude
that the indicator vectors of the bases of \(\widehat M_T\) are exactly
\[
\{(0,x):x\in\mathcal B_T^1\}
\;\cup\;
\{(1,x):x\in\mathcal B_T^0\}.
\]

By \eqref{eq:extension-bases}, every basis before an extension
produces at least one basis after the extension, and the
\(e_0\)-coordinate remains unchanged. Thus the initial bases
\(\{e\}\) and \(\{e_0\}\) produce final bases with indicator vectors
\((0,x)\) and \((1,y)\), where \(x\in\mathcal B_T^1\) and
\(y\in\mathcal B_T^0\), respectively. Hence
\(\mathcal B_T^1\ne\varnothing\) and
\(\mathcal B_T^0\ne\varnothing\). The final bases with indicator
vectors \((0,x)\) and \((1,y)\) avoid and contain \(e_0\), respectively,
so \(e_0\) is neither a coloop nor a loop.

Since \(e_0\) is not a coloop, the bases of
\(\widehat M_T\mathbin{\backslash} e_0\) are exactly the bases of
\(\widehat M_T\) avoiding \(e_0\).
The indicator vectors of the bases of \(\widehat M_T\) avoiding
\(e_0\) are precisely \((0,x)\) with \(x\in\mathcal B_T^1\).
Omitting the \(e_0\)-coordinate therefore identifies
\(\mathcal B_T^1\) as the set of indicator vectors of the bases
of \(\widehat M_T\mathbin{\backslash} e_0\).

Since \(e_0\) is not a loop, the bases of \(\widehat M_T\mathbin{/}e_0\)
are obtained by removing \(e_0\) from the bases of \(\widehat M_T\)
containing \(e_0\). On indicator vectors, removing \(e_0\) sends
\((1,x)\) to \(x\). Thus the indicator vectors of the bases of
\(\widehat M_T\mathbin{/}e_0\) form exactly \(\mathcal B_T^0\).
The definition of \(\mathcal P_T^\varepsilon\) and the base-polytope
formula \eqref{eq:matroid-base-polytope} give
\begin{equation}
\begin{aligned}
\mathcal P_T^1&=\operatorname{conv}(\mathcal B_T^1)
=\mathcal P(\widehat M_T\mathbin{\backslash} e_0),\\
\mathcal P_T^0&=\operatorname{conv}(\mathcal B_T^0)
=\mathcal P(\widehat M_T\mathbin{/}e_0).
\end{aligned}
\label{eq:state-minors}
\end{equation}
In \eqref{eq:state-minors}, the coordinates are indexed by the leaf set \(E(T)\).

Let \(x=(x_e)_{e\in E(T)}\in\mathcal B_T^\varepsilon\) be a
state-\(\varepsilon\) vector. We claim that
\begin{equation}
\sum_{e\in E(T)}x_e=s(T)+\varepsilon.
\label{eq:1.2}
\end{equation}
We prove \eqref{eq:1.2} by induction on \(T\).
For \(T=e\), the equality follows from \(s(e)=0\) and
\(x_e=\varepsilon\). Suppose that the root of \(T\) has child
subtrees \(T_1,\ldots,T_d\). Since \(x\in\mathcal B_T^\varepsilon\),
the recursion in \eqref{eq:state-vector-recursion} gives states
\(\varepsilon_1,\ldots,\varepsilon_d\in\{0,1\}\) such that
\[
x|_{E(T_i)}\in\mathcal B_{T_i}^{\varepsilon_i}
\qquad(1\le i\le d),
\]
where \(\sum_{i=1}^d\varepsilon_i=\varepsilon\) when the root of
\(T\) is labeled \(\text{\upshape P}\), and
\(\sum_{i=1}^d\varepsilon_i=d-1+\varepsilon\) when the root of
\(T\) is labeled \(\text{\upshape S}\).
By the induction hypothesis,
\(\sum_{e\in E(T)}x_e=\sum_{i=1}^d s(T_i)+\sum_{i=1}^d\varepsilon_i\).
Using the recursion for \(s(T)\) in \eqref{eq:1.1}, we obtain
\[
\sum_{e\in E(T)}x_e=
\begin{cases}
\displaystyle\sum_{i=1}^d s(T_i)+\varepsilon=s(T)+\varepsilon,
&\text{if the root of \(T\) is labeled }\text{\upshape P},\\[4pt]
\displaystyle\sum_{i=1}^d s(T_i)+d-1+\varepsilon=s(T)+\varepsilon,
&\text{if the root of \(T\) is labeled }\text{\upshape S}.
\end{cases}
\]

\begin{example}\label{ex:running-state-vectors}
For the tree \(T\) in Example~\ref{ex:tree-statistic},
\(\mathcal B_{T}^0\) consists of the indicator vectors of
\(\{e_2\}\) and \(\{e_3\}\), and \(\mathcal B_{T}^1\) consists of
the indicator vectors of the following nine subsets of \(E(T)\):
\[
\{e_2,e_3\}\qquad\text{and}\qquad
\{e_i,e_j\},\quad i\in\{2,3\},\quad j\in\{1,4,5,6\}.
\]
The vectors in \(\mathcal B_{T}^0\) and
\(\mathcal B_{T}^1\) have coordinate sums \(1\) and \(2\),
respectively, in agreement with \eqref{eq:1.2}.
\end{example}

For an \text{\upshape S-P} tree \(T\), construct \(M\) as follows. If \(T\) is a
single leaf, take \(M\) to be the single-coloop matroid on \(E(T)\),
so that \(\mathcal P(T)=\mathcal P(M)\). Otherwise, take a binary
bracketing \(T'\) of \(T\), so that
\(\mathcal P(T')=\mathcal P(T)\).
Set \(\varepsilon=1\) when the root of \(T'\) is labeled \(\text{\upshape P}\),
and \(\varepsilon=0\) when the root is labeled \(\text{\upshape S}\).
In both cases, \eqref{eq:state-vector-recursion} gives
\(\varepsilon_1+\varepsilon_2=1\). Thus the allowed pairs \((1,0)\)
and \((0,1)\) are precisely the indicator vectors of the two bases
of \(U_{1,2}\).
Assign the two elements of \(U_{1,2}\) to the two children of the
root of \(T'\), and perform the extensions encoded strictly below
the root, as in Section~\ref{subsec:construction-trees}.
Denote the resulting matroid on \(E(T)\) by \(M\).
The equivalence between basis extensions in \eqref{eq:extension-bases}
and child states in \eqref{eq:state-vector-recursion} identifies
\(\mathcal B_{T'}^\varepsilon\) as the set of indicator vectors of
the bases of \(M\).
Series and parallel extensions preserve connectedness, so \(M\)
is a connected series-parallel matroid and
\[
\mathcal P(M)=\operatorname{conv}(\mathcal B_{T'}^\varepsilon)
=\mathcal P(T')=\mathcal P(T).
\]
Conversely, Section~\ref{subsec:construction-trees} records a
construction of every connected series-parallel matroid with
at least two elements by an \text{\upshape S-P} tree.

\Needspace{5\baselineskip}
\begin{proposition}\label{prop:matroid-realization}
Let \(T\) be an \text{\upshape S-P} tree and \(M\) the connected series-parallel
matroid on \(E(T)\) constructed from \(T\) above.
Then \(\mathcal P(T)=\mathcal P(M)\).
\end{proposition}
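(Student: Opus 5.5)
The plan is to assemble the argument already sketched before the statement into a proof. There are two steps: show that binary bracketing does not change the root polytope, and then check that the bases of the constructed matroid are exactly the state vectors of the binary tree.

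\emph{Single leaf.} If \(T\) is a single leaf \(e\), then \(\mathcal P(T)=\mathcal P_e^1=\{x:x_e=1\}\). This is exactly the base polytope of the single-coloop matroid on \(\{e\}\), so there is nothing more to prove.

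\emph{Reduction to a binary tree.} Otherwise, fix a binary bracketing \(T'\) of \(T\). Its root has the same label as the root of \(T\). We established earlier that \(\mathcal B_{T'}^\varepsilon=\mathcal B_T^\varepsilon\) for both \(\varepsilon\in\{0,1\}\). Taking convex hulls and choosing \(\varepsilon\) according to the root label gives \(\mathcal P(T')=\mathcal P(T)\). It therefore suffices to show that the indicator vectors of the bases of \(M\) form exactly \(\mathcal B_{T'}^\varepsilon\). Here \(\varepsilon=1\) for a P-root and \(\varepsilon=0\) for an S-root; in both cases the admissible root child-state pairs are \((1,0)\) and \((0,1)\).

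\emph{Identifying the bases.} I will prove the following by induction on the number of extensions performed below the root, processed from the root toward the leaves. At each intermediate stage, the bases of the current matroid correspond exactly to the admissible state assignments on the current frontier of \(T'\), that is, on the elements assigned to the vertices reached so far.
\begin{itemize}
\item[\emph{Base case.}] For \(U_{1,2}\), the two bases are precisely the two admissible root child-state pairs.
\item[\emph{Inductive step.}] Extending the element \(f\) assigned to an internal vertex \(v\) replaces \(f\) by two elements. By the table \eqref{eq:extension-bases}, with \(f\in B\) read as state \(1\), a parallel extension produces exactly the child-state pairs with sum equal to the parent state. A series extension produces exactly the pairs with sum equal to \(1\) plus the parent state. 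These are the binary P- and S-conditions in \eqref{eq:state-vector-recursion}.
\end{itemize}
Since \eqref{eq:extension-bases} lists all bases of the extension, both inclusions are preserved at every step. At the end, every basis indicator of \(M\) lies in \(\mathcal B_{T'}^\varepsilon\). Conversely, every \(x\in\mathcal B_{T'}^\varepsilon\) comes with a consistent state labelling of all vertices, and that labelling traces out a chain of bases ending at \(x\). This is the same argument the paper gives for \(\widehat M_T\), with \(e_0\) removed.

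\emph{Conclusion.} Hence \(\mathcal P(M)=\operatorname{conv}\mathcal B_{T'}^\varepsilon=\mathcal P(T')=\mathcal P(T)\). Connectedness holds because series and parallel extensions of the connected matroid \(U_{1,2}\) stay connected, and \(M\) is series-parallel by construction.

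\emph{Main obstacle.} The only delicate step is the exact correspondence in the inductive step, specifically the converse direction. The states at internal vertices are not recorded by \(x\), so they must be chosen consistently. This is handled by using the witness states that the recursive definition of \(\mathcal B_{T'}^\varepsilon\) provides, and by noting that each extension step realizes any prescribed admissible pair of child states.
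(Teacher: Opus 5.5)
Your proposal is correct and follows essentially the same route as the paper. You treat the single leaf separately, pass to a binary bracketing \(T'\) using \(\mathcal B_{T'}^\varepsilon=\mathcal B_T^\varepsilon\), and identify the basis indicators of \(M\) with \(\mathcal B_{T'}^\varepsilon\) by matching the extension table \eqref{eq:extension-bases} with the binary \textup{P}/\textup{S} state conditions, exactly as the paper does for \(\widehat M_T\); your frontier induction is just a more explicit packaging of that same correspondence.
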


\begin{remark}\label{rem:speyer-tree}
Speyer's representation
\cite[Proposition~5.1, p.~21]{SpeyerTropicalLinearSpacesPreprint}
uses an unrooted tree with labeled leaves and internal vertices of
degree three, each colored white or black. For an \text{\upshape S-P} tree \(T\)
with an internal root, take a binary bracketing \(T'\), remove its
root and the two incident edges, and join its two children by an edge.
Keep the leaf labels, color the remaining \(\text{\upshape P}\)-vertices white,
and color the \(\text{\upshape S}\)-vertices black. The resulting tree is Speyer's
representation of the matroid with base polytope \(\mathcal P(T)\);
see Figure~\ref{fig:speyer-correspondence}.
We retain a root to specify the child subtrees at each vertex,
so that the state-vector sets in \eqref{eq:state-vector-recursion}
and the lattice-point counts in Section~\ref{sec:tree-counts}
can be computed recursively from the leaves to the root.
\end{remark}

\begin{figure}[ht]
\centering
\begin{tikzpicture}[
  internal/.style={circle,draw,thick,minimum size=6.5mm,inner sep=0pt},
  leaf/.style={draw,rounded corners=1pt,minimum width=8mm,
    minimum height=5.5mm,inner sep=1pt},
  white vertex/.style={internal,fill=white},
  black vertex/.style={internal,fill=black},
  every node/.style={font=\small}
]
\node at (0,0.7) {Binary \text{\upshape S-P} tree};
\node[internal,dashed,fill=white]
  (r) at (0,0) {\(\text{\upshape P}\)};
\node[internal] (a) at (-1.7,-1.1) {\(\text{\upshape P}\)};
\node[internal] (b) at (1.4,-1.1) {\(\text{\upshape P}\)};
\node[internal] (s) at (-0.9,-2.2) {\(\text{\upshape S}\)};
\node[internal] (c) at (2.1,-2.2) {\(\text{\upshape P}\)};
\node[leaf] (e1) at (-2.4,-2.2) {\(e_1\)};
\node[leaf] (e2) at (-1.55,-3.3) {\(e_2\)};
\node[leaf] (e3) at (-0.25,-3.3) {\(e_3\)};
\node[leaf] (e4) at (0.8,-2.2) {\(e_4\)};
\node[leaf] (e5) at (1.5,-3.3) {\(e_5\)};
\node[leaf] (e6) at (2.7,-3.3) {\(e_6\)};
\draw[thick,dashed] (r)--(a) (r)--(b);
\draw (a)--(e1) (a)--(s) (s)--(e2) (s)--(e3)
  (b)--(e4) (b)--(c) (c)--(e5) (c)--(e6);
\draw[->,thick] (3.3,-1.25)--(4.9,-1.25)
  node[midway,above=4pt,align=center,font=\scriptsize]
  {remove root\\join children};
\begin{scope}[xshift=86mm]
  \node at (0,0.7) {Speyer's tree};
  \node[white vertex] (aa) at (-1.5,-1.1) {};
  \node[white vertex] (bb) at (0.7,-1.1) {};
  \node[black vertex] (ss) at (-1.5,-2.2) {};
  \node[white vertex] (cc) at (2,-2.2) {};
  \node[leaf] (f1) at (-2.7,-0.2) {\(e_1\)};
  \node[leaf] (f2) at (-2.15,-3.3) {\(e_2\)};
  \node[leaf] (f3) at (-0.85,-3.3) {\(e_3\)};
  \node[leaf] (f4) at (1.2,-0.2) {\(e_4\)};
  \node[leaf] (f5) at (1.35,-3.3) {\(e_5\)};
  \node[leaf] (f6) at (2.65,-3.3) {\(e_6\)};
  \draw[very thick] (aa)--(bb);
  \draw (aa)--(f1) (aa)--(ss) (ss)--(f2) (ss)--(f3)
    (bb)--(f4) (bb)--(cc) (cc)--(f5) (cc)--(f6);
\end{scope}
\end{tikzpicture}
\caption{From a binary bracketing of \(T\) in
Example~\ref{ex:tree-statistic} to Speyer's tree.
The root and its two incident edges, drawn with dashed lines on the
left, are replaced by the thick solid edge on the right.
White and black internal vertices represent
\(\text{\upshape P}\) and \(\text{\upshape S}\), respectively. The root on the left specifies the
child subtrees used in the lattice-point recursions.}
\label{fig:speyer-correspondence}
\end{figure}

\begin{definition}\label{def:dual-tree}
For an \text{\upshape S-P} tree \(T\), let \(T^*\) be the tree obtained by
interchanging \(\text{\upshape P}\) and \(\text{\upshape S}\) at every internal vertex and keeping
all leaf labels unchanged.
\end{definition}

Replacing each leaf coordinate \(x_e\) by \(1-x_e\) interchanges
the \(\text{\upshape P}\)- and \(\text{\upshape S}\)-state recursions in
\eqref{eq:state-vector-recursion}, so
\[
\mathcal B_{T^*}^\varepsilon
=\{\mathbf1-x:x\in\mathcal B_T^{1-\varepsilon}\},
\qquad\varepsilon\in\{0,1\}.
\]
Taking convex hulls yields
\(\mathcal P_{T^*}^\varepsilon=\mathbf1-\mathcal P_T^{1-\varepsilon}\)
for \(\varepsilon\in\{0,1\}\).
If \(T\) is not a single leaf, Definition~\ref{def:state-configurations}
gives
\[
\mathcal P(T^*)=\mathbf1-\mathcal P(T),
\]
and \(\mathcal P(T^*)=\mathcal P(M^*)\) whenever
\(\mathcal P(T)=\mathcal P(M)\), by matroid duality.

The recursion \eqref{eq:1.1} also gives
\begin{equation}
s(T^*)=n_T-1-s(T).
\label{eq:tree-duality}
\end{equation}

\section{Load polytopes and lattice-point recursions}\label{sec:tree-counts}\label{ux6811ux6761ux4ef6ux72b6ux6001ux4e0eux5b9eux9645ux591aux9762ux4f53}\label{sec:1}

We describe the state polytopes by inequalities on coordinate sums over
subtrees. We then derive recursive formulas for lattice-point counts by
combining the counts on the child subtrees.

\subsection{Load polytopes}\label{ux8d1fux8f7dux7ea6ux675fux7684ux5fc5ux8981ux6027ux4e0eux5145ux5206ux6027}

For a subtree \(U\subseteq T\) and real \(t\ge0\), \eqref{eq:1.2}
shows that a vector in
\(t\mathcal B_U^\varepsilon\) has coordinate sum
\(s(U)t+\varepsilon t\). We therefore subtract \(s(U)t\) from
the sum of the coordinates on \(E(U)\). We call the adjusted sum the
\emph{load} on \(U\); on \(t\mathcal B_U^\varepsilon\), it equals
\(\varepsilon t\).

\begin{definition}
\label{def:load-polytope}
Let \(T\) be an \text{\upshape S-P} tree and let \(t\in\mathbb R_{\ge0}\). For each subtree
\(U\subseteq T\), define the affine load function
\(\lambda_U(\,\cdot\,;t):\mathbb R^{E(U)}\longrightarrow\mathbb R\) by
\begin{equation}
\lambda_U(x;t)=\sum_{e\in E(U)}x_e-s(U)t,
\qquad x\in\mathbb R^{E(U)}.
\label{eq:1.3}
\end{equation}
Define the polytope \(\mathcal Q_T(t)\) by
\begin{equation}
\mathcal Q_T(t):=\left\{x\in\mathbb R^{E(T)}:
0\le\lambda_U(x;t)\le t
\text{ for every subtree }U\subseteq T\right\}.
\label{eq:1.5}
\end{equation}
For a leaf subtree \(U=e\), the corresponding inequalities are
\(0\le x_e\le t\).
\end{definition}

Suppose that \(U\) has child subtrees \(U_1,\ldots,U_d\).
When the root of \(U\) is labeled \(\text{\upshape P}\), we have \(s(U)=\sum_i s(U_i)\);
when the root is labeled \(\text{\upshape S}\), we have
\(s(U)=\sum_i s(U_i)+d-1\). Substituting the formulas for \(s(U)\) into
\eqref{eq:1.3} gives
\begin{equation}
\begin{aligned}
\lambda_U(x;t)&=\sum_{i=1}^d\lambda_{U_i}(x;t)
&&\text{if }U=\text{\upshape P}(U_1,\ldots,U_d),\\
t-\lambda_U(x;t)&=\sum_{i=1}^d\bigl(t-\lambda_{U_i}(x;t)\bigr)
&&\text{if }U=\text{\upshape S}(U_1,\ldots,U_d).
\end{aligned}
\label{eq:1.4}
\end{equation}

\Needspace{9\baselineskip}
\begin{example}\label{ex:running-load-inequalities}
For the tree \(T\) in Example~\ref{ex:tree-statistic}, write
\(x_i=x_{e_i}\). The loads at the three
internal vertices are
\[
\lambda_{T_2}(x;t)=x_2+x_3-t,\qquad
\lambda_{T_3}(x;t)=x_4+x_5+x_6,\qquad
\lambda_{T}(x;t)=\sum_{i=1}^6x_i-t.
\]
Thus \(\mathcal Q_{T}(t)\) is given by \(0\le x_i\le t\) and
\[
t\le x_2+x_3\le2t,\qquad
0\le x_4+x_5+x_6\le t,\qquad
t\le\sum_{i=1}^6x_i\le2t.
\]
\end{example}

We will show that the inequalities
\[
0\le\lambda_U(x;t)\le t
\qquad\text{for every subtree }U\subseteq T
\]
characterize \(t\operatorname{conv}(\mathcal B_T^0\cup\mathcal B_T^1)\).

\begin{lemma}\label{lem:convexification}
For \(t\in\mathbb R_{\ge0}\) and
\(\varepsilon\in\{0,1\}\), we have
\begin{equation}
\mathcal Q_T(t)=t\operatorname{conv}(\mathcal B_T^0\cup\mathcal B_T^1),
\qquad
\mathcal Q_T(t)\cap\{x:\lambda_T(x;t)=\varepsilon t\}=t\mathcal P_T^\varepsilon.
\label{eq:1.6}
\end{equation}
\end{lemma}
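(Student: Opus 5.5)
The plan is induction on \(T\), with a strengthened hypothesis that describes every intermediate level of the load \(\lambda_T\), not only the two extreme slices. At \(t=0\) every set in \eqref{eq:1.6} is \(\{0\}\), and all constraints are homogeneous in \((x,t)\). So it suffices to treat \(t=1\); write \(\lambda_U(x)=\lambda_U(x;1)\). I will prove that for every \(\mu\in[0,1]\)
\[
\mathcal Q_T(1)\cap\{x:\lambda_T(x)=\mu\}=(1-\mu)\mathcal P_T^0+\mu\,\mathcal P_T^1 .
\tag{\(\ast\)}
\]
Both claims of \eqref{eq:1.6} follow from \((\ast\)). Taking \(\mu\in\{0,1\}\) gives the slice identity. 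Taking the union over \(\mu\in[0,1]\) gives the first identity, because the convex hull of the union of two convex sets \(A,B\) is \(\bigcup_\mu\bigl((1-\mu)A+\mu B\bigr)\). Here the constraint \(0\le\lambda_T\le1\) is part of the definition of \(\mathcal Q_T(1)\).

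First I show the inclusion \(\supseteq\) in \((\ast)\). Let \(x\in\mathcal B_T^\varepsilon\). Unwinding \eqref{eq:state-vector-recursion} assigns a state \(\varepsilon_U\in\{0,1\}\) to every subtree \(U\) with \(x|_{E(U)}\in\mathcal B_U^{\varepsilon_U}\). By \eqref{eq:1.2}, \(\lambda_U(x)=\varepsilon_U\in[0,1]\) for every \(U\), so \(x\in\mathcal Q_T(1)\) and \(\lambda_T(x)=\varepsilon\). The set \(\mathcal Q_T(1)\) is convex and \(\lambda_T\) is affine. Hence every point of \((1-\mu)\mathcal P_T^0+\mu\mathcal P_T^1\) lies in \(\mathcal Q_T(1)\) and has load \(\mu\).

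For \(\subseteq\), the base case is a leaf: \(x_e\in[0,1]\) and \(\lambda_e=x_e=\mu\) give \(x=(1-\mu)\cdot0+\mu\cdot1\). Now suppose \(T=\text{\upshape P}(T_1,\ldots,T_d)\) and \(x\in\mathcal Q_T(1)\) with \(\lambda_T(x)=\mu\).

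1. Put \(\mu_i=\lambda_{T_i}(x)\in[0,1]\). Every subtree of \(T_i\) is a subtree of \(T\), so \(x|_{E(T_i)}\in\mathcal Q_{T_i}(1)\).
2. By \eqref{eq:1.4}, \(\sum_i\mu_i=\mu\le1\). By induction, \(x|_{E(T_i)}=(1-\mu_i)y_i^0+\mu_i y_i^1\) with \(y_i^\varepsilon\in\mathcal P_{T_i}^\varepsilon\).
3. Assign weight \(1-\mu\) to the point \(z^0=(y_1^0,\ldots,y_d^0)\). For each \(i\), assign weight \(\mu_i\) to the point \(z^i\) that equals \(y_i^1\) on \(E(T_i)\) and \(y_j^0\) on \(E(T_j)\) for \(j\ne i\).
4. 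On \(E(T_j)\) the weighted sum equals \((1-\mu+\mu-\mu_j)y_j^0+\mu_jy_j^1=x|_{E(T_j)}\).
5. The convex hull of a product of finite sets is the product of their hulls. Since the sets in \eqref{eq:state-vector-recursion} are such products, \(z^0\in\mathcal P_T^0\) and each \(z^i\in\mathcal P_T^1\). Hence \(x\in(1-\mu)\mathcal P_T^0+\mu\mathcal P_T^1\).

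For \(T=\text{\upshape S}(T_1,\ldots,T_d)\) I apply duality. The map \(x\mapsto\mathbf1-x\) sends \(\mathcal Q_T(1)\) to \(\mathcal Q_{T^*}(1)\), since \(\lambda_U\) becomes \(1-\lambda_{U^*}\) by \eqref{eq:tree-duality}. It also sends \(\mathcal P_T^\varepsilon\) to \(\mathcal P_{T^*}^{1-\varepsilon}\). So the \(\text{\upshape S}\) case follows from the \(\text{\upshape P}\) case for \(T^*\). Alternatively, the same product decomposition can be run with the roles of states \(0\) and \(1\) exchanged, using the second line of \eqref{eq:1.4}.

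The main obstacle is to formulate the strengthened hypothesis \((\ast)\) for all \(\mu\), rather than only for \(\mu\in\{0,1\}\). With only the endpoint cases, the child restrictions have fractional loads that the hypothesis would not cover. The remaining point needing care is the weight bookkeeping in step 3, which relies on \(\sum_i\mu_i=\mu\le1\). As a fallback, the constraint matrix of \(\mathcal Q_T(1)\) has rows indexed by a laminar family of subsets. Such matrices are totally unimodular, so the vertices of \(\mathcal Q_T(1)\) are integral \(0/1\)-vectors with loads in \(\{0,1\}\). These would then have to be identified with \(\mathcal B_T^0\cup\mathcal B_T^1\) by the same recursion.
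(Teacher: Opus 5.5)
Your proof is correct, and it takes a genuinely different route from the paper.

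The paper argues through integrality. Ordering the leaves depth-first makes every \(E(U)\) an interval, so the constraint matrix of \(\mathcal Q_T(1)\) has consecutive ones and is totally unimodular. Hence all vertices are \(0/1\) vectors, and these are matched with \(\mathcal B_T^0\cup\mathcal B_T^1\) through the state recursion. The two slices are then read off from convex coefficients, using that the affine function \(\lambda_T(\,\cdot\,;1)\) is \(0\) on \(\mathcal B_T^0\) and \(1\) on \(\mathcal B_T^1\). This is essentially your fallback.

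You instead prove the stronger statement \((\ast)\) at every load level \(\mu\in[0,1]\) by an explicit inductive convex decomposition. It rests on two facts: the state sets are unions of products, and the child loads satisfy \(\sum_i\mu_i=\mu\) (respectively \(\sum_i(1-\mu_i)=1-\mu\) at an \(\text{\upshape S}\)-root). Your route buys several things:
\begin{itemize}
\item a self-contained elementary argument with no appeal to total unimodularity;
\item extra information, namely that every intermediate slice of \(\mathcal Q_T(1)\) is the weighted Minkowski sum \((1-\mu)\mathcal P_T^0+\mu\mathcal P_T^1\);
\item a proof that is the continuous analogue of the load recursions \eqref{eq:2.2} and \eqref{eq:multiary-p-recursion} used later.
\end{itemize}
The paper's route is shorter once the total unimodularity criterion is granted, and it delivers integrality of \(\mathcal Q_T(1)\) directly.

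Two small bookkeeping points remain. First, several of your steps need \(\mathcal B_T^0,\mathcal B_T^1\ne\varnothing\): the endpoint cases \(\mu\in\{0,1\}\), choosing \(y_i^1\) when \(\mu_i=0\), and the formula for \(\operatorname{conv}(A\cup B)\). Nonemptiness follows from a one-line induction or from Section~\ref{subsec:matroid-polytopes}. Second, if you treat the \(\text{\upshape S}\)-case through \(T^*\), run the induction over all \(\text{\upshape S-P}\) trees by number of vertices, so that the hypothesis applies to the children \(T_i^*\). Alternatively, use the direct decomposition with the roles of states \(0\) and \(1\) exchanged, which works verbatim.
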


\begin{proof}
If \(t=0\), the leaf inequalities \(0\le x_e\le t\) force
\(x=0\), and both equalities hold. For \(t>0\), the substitution
\(x=ty\) gives \(\lambda_U(x;t)=t\lambda_U(y;1)\) for every
subtree \(U\subseteq T\). Thus \(\mathcal Q_T(t)=t\mathcal Q_T(1)\),
and \(\lambda_T(x;t)=\varepsilon t\) is equivalent to
\(\lambda_T(y;1)=\varepsilon\). It therefore suffices to prove
the lemma for \(t=1\).

We first prove that \(\mathcal Q_T(1)\) is integral.
The inequalities defining \(\mathcal Q_T(1)\) are
\[
s(U)\le\sum_{e\in E(U)}x_e\le s(U)+1
\qquad\text{for every subtree }U\subseteq T.
\]
Let \(A=(A_{U,e})\) be the \(0/1\) matrix whose rows are indexed
by the subtrees \(U\subseteq T\) and whose columns are indexed by
the leaves \(e\in E(T)\), with \(A_{U,e}=1\) if and only if
\(e\in E(U)\). Order the columns by a depth-first traversal of the
leaves of \(T\). Each \(E(U)\) is then an interval, so every row
of \(A\) has consecutive ones. Hence \(A\) is totally unimodular
\cite[pp.~1671--1672]{SchrijverPolyhedralCombinatorics}; that is,
every square submatrix has determinant in \(\{0,\pm1\}\).
The lower bounds can be written as
\(-\sum_{e\in E(U)}x_e\le-s(U)\), so the full constraint matrix
is obtained by stacking \(A\) above \(-A\).
Duplicating rows and changing their signs preserve total unimodularity,
so the full constraint matrix is totally unimodular.
A polyhedron defined by a totally unimodular constraint matrix and an
integral right-hand side is integral
\cite[Theorem~4.1]{SchrijverPolyhedralCombinatorics}.
Here the right-hand sides \(-s(U)\) and \(s(U)+1\) are integers,
so \(\mathcal Q_T(1)\) is integral. The leaf inequalities
\(0\le x_e\le1\) then imply that every vertex is a \(0/1\) vector.

Let \(x\in\mathcal Q_T(1)\cap\{0,1\}^{E(T)}\).
For every subtree \(U\subseteq T\), the load
\(\lambda_U(x;1)\) lies in \(\{0,1\}\). Assign state
\(\lambda_U(x;1)\) to the root of each subtree \(U\subseteq T\).
At each leaf \(e\), the state is \(\lambda_e(x;1)=x_e\).
For a subtree \(U\) with child subtrees
\(U_1,\ldots,U_d\), \eqref{eq:1.4} gives
\[
\sum_{i=1}^d\lambda_{U_i}(x;1)=
\begin{cases}
\lambda_U(x;1),&\text{if }U=\text{\upshape P}(U_1,\ldots,U_d),\\
d-1+\lambda_U(x;1),&\text{if }U=\text{\upshape S}(U_1,\ldots,U_d).
\end{cases}
\]
Using \(\lambda_U(x;1)\) as the parent state and
\(\lambda_{U_i}(x;1)\) as the child states in
\eqref{eq:state-vector-recursion}, induction on \(U\) gives
\(x|_{E(U)}\in\mathcal B_U^{\lambda_U(x;1)}\) for every subtree
\(U\subseteq T\). Taking \(U=T\) yields
\(x\in\mathcal B_T^{\lambda_T(x;1)}\).

Conversely, if \(x\in\mathcal B_T^0\cup\mathcal B_T^1\),
Definition~\ref{def:state-configurations} gives
\(x|_{E(U)}\in\mathcal B_U^{\varepsilon_U}\) for some
\(\varepsilon_U\in\{0,1\}\) at every subtree \(U\subseteq T\).
Applying \eqref{eq:1.2} to \(x|_{E(U)}\) yields
\[
\lambda_U(x;1)=\sum_{e\in E(U)}x_e-s(U)=\varepsilon_U\in\{0,1\},
\]
so \(x\in\mathcal Q_T(1)\). We have proved
\[
\mathcal Q_T(1)\cap\{0,1\}^{E(T)}=\mathcal B_T^0\cup\mathcal B_T^1.
\]
Since every vertex of \(\mathcal Q_T(1)\) is a \(0/1\) vector,
\[
\mathcal Q_T(1)=\operatorname{conv}(\mathcal B_T^0\cup\mathcal B_T^1).
\]
For \(x\in\mathcal Q_T(1)\), write
\(x=\sum_{v\in\mathcal B_T^0\cup\mathcal B_T^1}\theta_v v\),
where \(\theta_v\ge0\) and \(\sum_v\theta_v=1\).
Since \(\lambda_T(\,\cdot\,;1)\) is affine,
\[
\lambda_T(x;1)
=\sum_{v\in\mathcal B_T^0\cup\mathcal B_T^1}\theta_v\lambda_T(v;1)
=\sum_{v\in\mathcal B_T^1}\theta_v.
\]
If \(\lambda_T(x;1)=0\), every coefficient \(\theta_v\) with
\(v\in\mathcal B_T^1\) is zero. If \(\lambda_T(x;1)=1\), the
coefficients indexed by \(\mathcal B_T^1\) already sum to \(1\),
so every \(\theta_v\) with \(v\in\mathcal B_T^0\) is zero.
Thus \(\lambda_T(x;1)=\varepsilon\) implies
\(x\in\operatorname{conv}(\mathcal B_T^\varepsilon)\).
Conversely, every convex combination of vectors in
\(\mathcal B_T^\varepsilon\) satisfies \(\lambda_T(x;1)=\varepsilon\).
Hence
\[
\mathcal Q_T(1)\cap\{x:\lambda_T(x;1)=\varepsilon\}
=\operatorname{conv}(\mathcal B_T^\varepsilon)
=\mathcal P_T^\varepsilon.\qedhere
\]
\end{proof}

\subsection{Recursions for lattice-point counts}\label{subsec:load-count}\label{ux7236ux8d1fux8f7dux8c31ux4e0eux7cbeux786eux9012ux5f52}\label{sec:2}

For lattice points \(x\in\mathcal Q_T(t)\), set
\(a=\lambda_T(x;t)\) and \(b=t-\lambda_T(x;t)\), so that
\(a,b\ge0\) and \(t=a+b\). If \(T\) has child subtrees
\(T_1,\ldots,T_d\), \eqref{eq:1.4} gives
\(a=\sum_i\lambda_{T_i}(x;t)\) at a \(\text{\upshape P}\)-root and
\(b=\sum_i(t-\lambda_{T_i}(x;t))\) at an \(\text{\upshape S}\)-root.
We therefore keep both parameters in the lattice-point count
\(K_T(a,b)\).

\begin{definition}\label{def:full-load-count}
For \(a,b\in\mathbb Z_{\ge0}\), put \(t=a+b\) and define
\begin{equation}
K_T(a,b)=\#\{x\in\mathcal Q_T(t)\cap\mathbb Z^{E(T)}:\lambda_T(x;t)=a\}.
\label{eq:2.1}
\end{equation}
\end{definition}

For \(t\in\mathbb Z_{\ge0}\), Lemma~\ref{lem:convexification} shows
that \(K_T(t,0)\) counts the lattice points of \(t\mathcal P_T^1\),
and \(K_T(0,t)\) counts the lattice points of \(t\mathcal P_T^0\).
The definition of \(\mathcal P(T)\) therefore gives
\begin{equation}
i(\mathcal P(T),t)=
\begin{cases}
K_T(t,0),&\text{if \(T\) is a single leaf or has a \(\text{\upshape P}\)-labeled root},\\
K_T(0,t),&\text{if \(T\) has an \(\text{\upshape S}\)-labeled root}.
\end{cases}
\label{eq:2.4}
\end{equation}

Next, we compute \(K_T\) recursively on nonnegative integer pairs.
Theorem~\ref{thm:positive-expansion} shows that \(K_T\) extends to a
polynomial in \(a\) and \(b\) with nonnegative rational coefficients.

\Needspace{11\baselineskip}
\begin{lemma}\label{lem:load-recursion}
Let \(a,b\in\mathbb Z_{\ge0}\). For a leaf \(e\), we have
\(K_e(a,b)=1\). Let \(U\) and \(V\) be \text{\upshape S-P} trees with disjoint sets of
leaf labels. If \(t=a+b\), then
\begin{align}
K_{\text{\upshape P}(U,V)}(a,b)&=\sum_{\zeta=0}^{a}
K_U(\zeta,t-\zeta)K_V(a-\zeta,b+\zeta),
\label{eq:2.2}\\
K_{\text{\upshape S}(U,V)}(a,b)&=\sum_{\zeta=0}^{b}
K_U(t-\zeta,\zeta)K_V(a+\zeta,b-\zeta).
\label{eq:2.3}
\end{align}
\end{lemma}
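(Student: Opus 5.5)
The plan is to unwind Definition~\ref{def:full-load-count} directly, using the subtree structure of \(\mathcal Q_T(t)\) and the load additivity \eqref{eq:1.4}. For a leaf \(e\), the polytope \(\mathcal Q_e(t)\) is \(\{0\le x_e\le t\}\) and \(\lambda_e(x;t)=x_e\) since \(s(e)=0\). So the condition \(\lambda_e=a\) picks out the single lattice point \(x_e=a\), which lies in \([0,t]\) because \(0\le a\le a+b=t\). Hence \(K_e(a,b)=1\).

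Now let \(T=\text{\upshape P}(U,V)\) or \(\text{\upshape S}(U,V)\). The subtrees of \(T\) are \(T\) itself together with the subtrees of \(U\) and of \(V\). Moreover, for a subtree \(W\subseteq U\), the load \(\lambda_W(x;t)\) depends only on \(x|_{E(U)}\), and similarly for \(V\). Therefore, writing \(x=(y,z)\) with \(y=x|_{E(U)}\) and \(z=x|_{E(V)}\), a lattice point \(x\) lies in \(\mathcal Q_T(t)\) with \(\lambda_T(x;t)=a\) if and only if:
\begin{itemize}
\item \(y\in\mathcal Q_U(t)\cap\mathbb Z^{E(U)}\);
\item \(z\in\mathcal Q_V(t)\cap\mathbb Z^{E(V)}\);
\item \(\lambda_T(x;t)=a\).
\end{itemize}
The root constraint \(0\le\lambda_T\le t\) is automatic once \(\lambda_T=a\in[0,t]\).

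In the \(\text{\upshape P}\) case, \eqref{eq:1.4} gives \(\lambda_T=\lambda_U(y;t)+\lambda_V(z;t)\). Partition the lattice points by \(\zeta=\lambda_U(y;t)\). This is an integer because \(s(U)t\in\mathbb Z\), and it lies in \([0,t]\). The remaining condition is \(\lambda_V(z;t)=a-\zeta\). Since \(\lambda_V(z;t)\ge0\), this forces \(\zeta\le a\). Conversely, for \(0\le\zeta\le a\) both \(\zeta\) and \(a-\zeta\) lie in \([0,t]\), so no extra constraint is lost.

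The count of \(y\) with \(\lambda_U=\zeta\) is \(K_U(\zeta,t-\zeta)\), and the count of \(z\) with \(\lambda_V=a-\zeta\) is \(K_V(a-\zeta,t-a+\zeta)=K_V(a-\zeta,b+\zeta)\). Here we use that the second argument must equal \(t\) minus the first, so that the dilation parameter of each child is still \(t\). Because the choices of \(y\) and \(z\) are independent, the count for each \(\zeta\) is a product, and summing over \(\zeta\) gives \eqref{eq:2.2}.

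In the \(\text{\upshape S}\) case, use the second line of \eqref{eq:1.4}: \(t-\lambda_T=(t-\lambda_U)+(t-\lambda_V)\), and the condition \(\lambda_T=a\) becomes \(b=(t-\lambda_U)+(t-\lambda_V)\). Partition by \(\zeta=t-\lambda_U(y;t)\in[0,b]\). Then \(\lambda_U=t-\zeta\) and \(t-\lambda_V=b-\zeta\), so \(\lambda_V=a+\zeta\). The factors are \(K_U(t-\zeta,\zeta)\) and \(K_V(a+\zeta,b-\zeta)\), which gives \eqref{eq:2.3}.

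There is no real obstacle. The only points needing care are:
\begin{itemize}
\item checking that the subtree constraints of \(T\) split exactly into those of \(U\), those of \(V\), and the single root constraint;
\item verifying that the summation ranges capture exactly the admissible values of \(\zeta\);
\item confirming that every argument pair fed to \(K_U\) and \(K_V\) is a pair of nonnegative integers summing to \(t\), which justifies applying Definition~\ref{def:full-load-count} to the children with the same \(t\).
\end{itemize}
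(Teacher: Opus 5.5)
Your proposal is correct and follows essentially the same route as the paper. You split a lattice point of $\mathcal Q_T(t)$ into its restrictions to $E(U)$ and $E(V)$, apply the load identities \eqref{eq:1.4} at the root, partition by $\zeta=\lambda_U$ (parallel case) or $\zeta=t-\lambda_U$ (series case), and multiply the counts because the two restrictions are independent; your explicit checks on the splitting of subtree constraints and on the argument pairs are the same verifications the paper makes when it assembles $\widetilde{x}$ from $y$ and $z$.
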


\begin{proof}
For a leaf, the unique coordinate is \(x_e=a\), so \(K_e(a,b)=1\).
A point \(x\) counted by \(K_{\text{\upshape P}(U,V)}(a,b)\) satisfies
\(\lambda_{\text{\upshape P}(U,V)}(x;t)=a\). The first identity in \eqref{eq:1.4}
therefore gives \(\lambda_U(x;t)+\lambda_V(x;t)=a\).
For each integer \(\zeta\) with \(0\le \zeta\le a\),
choose integer vectors \(y\in\mathcal Q_U(t)\) and
\(z\in\mathcal Q_V(t)\) with
\(\lambda_U(y;t)=\zeta\) and \(\lambda_V(z;t)=a-\zeta\). Form the vector
\(\widetilde{x}\) on \(E(\text{\upshape P}(U,V))\) by setting
\[
\widetilde{x}_e=\begin{cases}
y_e,&e\in E(U),\\
z_e,&e\in E(V).
\end{cases}
\]
For every subtree \(W\subseteq U\), we have
\(\lambda_W(\widetilde{x};t)=\lambda_W(y;t)\in[0,t]\), and for every
\(W\subseteq V\), we have \(\lambda_W(\widetilde{x};t)=\lambda_W(z;t)\in[0,t]\).
By the first identity in \eqref{eq:1.4},
\[
\lambda_{\text{\upshape P}(U,V)}(\widetilde{x};t)
=\lambda_U(y;t)+\lambda_V(z;t)
=\zeta+(a-\zeta)=a.
\]
Since \(t=a+b\) with \(a,b\ge0\), we have \(0\le a\le t\).
Together with the inequalities verified for the subtrees of \(U\) and
\(V\), this shows that \(\widetilde{x}\in\mathcal Q_{\text{\upshape P}(U,V)}(t)\).
Moreover, \(\lambda_{\text{\upshape P}(U,V)}(\widetilde{x};t)=a\), so \(\widetilde{x}\) is counted by
\(K_{\text{\upshape P}(U,V)}(a,b)\). Conversely, every point \(x\)
counted by \(K_{\text{\upshape P}(U,V)}(a,b)\) determines
\(y=(x_e)_{e\in E(U)}\), \(z=(x_e)_{e\in E(V)}\), and
\(\zeta=\lambda_U(x;t)\). There are
\(K_U(\zeta,t-\zeta)K_V(a-\zeta,b+\zeta)\) choices for \((y,z)\), which proves
\eqref{eq:2.2}.

For a point \(x\) counted by \(K_{\text{\upshape S}(U,V)}(a,b)\), set
\(\zeta=t-\lambda_U(x;t)\). Since \(\lambda_{\text{\upshape S}(U,V)}(x;t)=a\),
the second identity in \eqref{eq:1.4} gives
\(\zeta+t-\lambda_V(x;t)=t-a=b\). Thus \(0\le \zeta\le b\),
\(\lambda_U(x;t)=t-\zeta\), and \(\lambda_V(x;t)=a+\zeta\).
Conversely, for each integer \(\zeta\) with
\(0\le \zeta\le b\), choose integer vectors
\(y\in\mathcal Q_U(t)\) and \(z\in\mathcal Q_V(t)\) with
\(\lambda_U(y;t)=t-\zeta\) and \(\lambda_V(z;t)=a+\zeta\).
Set \(\widetilde{x}_e=y_e\) for \(e\in E(U)\) and \(\widetilde{x}_e=z_e\) for \(e\in E(V)\).
Then \(0\le\lambda_W(\widetilde{x};t)\le t\) for every subtree \(W\) contained
in \(U\) or \(V\), and
\(\lambda_{\text{\upshape S}(U,V)}(\widetilde{x};t)=(t-\zeta)+(a+\zeta)-t=a\).
The coordinates of \(\widetilde{x}\) recover \(y\) and \(z\) on their respective
leaf sets, while \(\zeta=t-\lambda_U(\widetilde{x};t)\). There are
\(K_U(t-\zeta,\zeta)K_V(a+\zeta,b-\zeta)\) choices for \((y,z)\), which proves
\eqref{eq:2.3}.
\end{proof}

Let \(a,b\in\mathbb Z_{\ge0}\) and \(t=a+b\).
For \(T=\text{\upshape P}(T_1,\ldots,T_d)\) and a point \(x\) counted by
\(K_T(a,b)\), put \(a_i=\lambda_{T_i}(x;t)\) for
\(i=1,\ldots,d\). Since \(\lambda_T(x;t)=a\), the first identity
in \eqref{eq:1.4} gives
\(a_1+\cdots+a_d=\lambda_T(x;t)=a\).
For each such tuple of nonnegative integers,
\(K_{T_i}(a_i,t-a_i)\) counts the integer vectors
\(y\in\mathcal Q_{T_i}(t)\) satisfying \(\lambda_{T_i}(y;t)=a_i\).
Combining the vectors on the disjoint leaf sets \(E(T_i)\) gives
\begin{equation}
K_T(a,b)=
\sum_{\substack{a_1,\ldots,a_d\in\mathbb Z_{\ge0}\\
                 a_1+\cdots+a_d=a}}
\prod_{i=1}^d K_{T_i}(a_i,t-a_i).
\label{eq:multiary-p-recursion}
\end{equation}
For \(T=\text{\upshape S}(T_1,\ldots,T_d)\), let \(x\) be a lattice point counted
by \(K_T(a,b)\), and put \(b_i=t-\lambda_{T_i}(x;t)\) for
\(i=1,\ldots,d\). The inequalities defining \(\mathcal Q_T(t)\)
imply that each \(b_i\) is a nonnegative integer. The second
identity in \eqref{eq:1.4} gives
\[
\sum_{i=1}^d b_i=t-\lambda_T(x;t)=t-a=b.
\]
Conversely, fix nonnegative integers \(b_1,\ldots,b_d\) with sum
\(b\). For each \(i=1,\ldots,d\), there are
\(K_{T_i}(t-b_i,b_i)\) lattice points
\(y^{(i)}\in\mathcal Q_{T_i}(t)\) satisfying
\(\lambda_{T_i}(y^{(i)};t)=t-b_i\).
The vectors \(y^{(i)}\) can be chosen independently and combined
on the disjoint leaf sets \(E(T_i)\). The resulting vector
\(\widetilde{x}\) satisfies the inequalities for every subtree
contained in a child subtree, and
\[
\lambda_T(\widetilde{x};t)=t-\sum_{i=1}^d b_i=a\in[0,t].
\]
Thus \(\widetilde{x}\) is counted by \(K_T(a,b)\). Its restrictions
to \(E(T_i)\) recover \(y^{(i)}\), and
\(b_i=t-\lambda_{T_i}(\widetilde{x};t)\). Summing the product of
the numbers of independent choices over all tuples gives
\begin{equation}
K_T(a,b)=
\sum_{\substack{b_1,\ldots,b_d\in\mathbb Z_{\ge0}\\
                 b_1+\cdots+b_d=b}}
\prod_{i=1}^d K_{T_i}(t-b_i,b_i).
\label{eq:multiary-s-recursion}
\end{equation}

In a binary bracketing of the \(d\) child subtrees, each new subtree
\(U\) has parameter
\(\lambda_U(x;t)=\sum_{i:T_i\subseteq U}a_i\) in the \(\text{\upshape P}\)-case, or
\(t-\lambda_U(x;t)=\sum_{i:T_i\subseteq U}b_i\) in the \(\text{\upshape S}\)-case.
The intermediate parameters are therefore uniquely determined.
Each group sum is nonnegative, at most its parent group sum, and at most
\(t\), so the new constraints \(0\le\lambda_U(x;t)\le t\) and
the binary summation ranges are satisfied. Thus each tuple occurs once,
and iterating \eqref{eq:2.2} and \eqref{eq:2.3} gives
\eqref{eq:multiary-p-recursion} and \eqref{eq:multiary-s-recursion},
respectively.

\begin{example}\label{ex:tree-load-polynomial}
For the tree \(T\) in Example~\ref{ex:tree-statistic},
let \(a,b\) be nonnegative integers and set \(t=a+b\). For integer
loads \(u,v\ge0\) on \(T_2,T_3\) with \(u+v\le a\), the
coordinate of \(e_1\) is \(a-u-v\). There are \(t-u+1\) integer
pairs \((x_{e_2},x_{e_3})\in[0,t]^2\) with
\(x_{e_2}+x_{e_3}=t+u\), and \(\binom{v+2}{2}\) nonnegative
integer triples \((x_{e_4},x_{e_5},x_{e_6})\) with sum \(v\).
Applying the \(\text{\upshape P}\)-recursion \eqref{eq:2.2} twice gives
\[
\begin{aligned}
K_{T}(a,b)
=\sum_{v=0}^a\binom{v+2}{2}\sum_{u=0}^{a-v}(t-u+1)
=\frac{4a+5b+5}{5}\binom{a+4}{4}.
\end{aligned}
\]

Since the root of \(T\) is labeled \(\text{\upshape P}\), 
\[
i(\mathcal P(T),t)=K_T(t,0)
=\frac{(4t+5)(t+1)(t+2)(t+3)(t+4)}{120}.
\]
At \(t=0\) this gives \(1\), and at \(t=1\) it gives \(9\),
the number of state-\(1\) vectors listed in
Example~\ref{ex:running-state-vectors}.
Each of the five linear factors has positive coefficients in
degrees \(0\) and \(1\), so their product has strictly positive
coefficients in degrees \(0\) through \(5=n_{T}-1\).
\end{example}

\section{Positive expansions and Ehrhart positivity}\label{sec:positive-expansions}\label{sec:ehrhart-positivity}\label{ux6839ux7aefux6240ux6709ux666eux901aux7cfbux6570ux7684ux4e25ux683cux6b63ux6027}\label{sec:8}\label{ux8981ux6784ux9020ux7684ux5b8cux6574ux5f31ux94feux6b63ux5c55ux5f00}\label{sec:3}

We seek an expansion of \(K_T(a,b)\) in which each summand has
nonnegative coefficients in \(a\) and \(b\).
Write \(\mathbb Q_{\ge0}[t]\) for the polynomials in \(t\) with
nonnegative rational coefficients.

\Needspace{11\baselineskip}
\begin{theorem}\label{thm:positive-expansion}
For every \text{\upshape S-P} tree \(T\), there exist nonnegative rational weights
\(w_T(c,d,k)\), indexed by \(c,d,k\in\mathbb Z_{\ge0}\) and
vanishing whenever \(c+d+k>n_T-1\), such that
\begin{equation}
K_T(a,b)=\sum_{c,d,k\ge0}w_T(c,d,k)(a+b)^k
\binom{a+c}{c}\binom{b+d}{d},
\label{eq:3.3}
\end{equation}
for all \(a,b\in\mathbb Z_{\ge0}\). Moreover,
\begin{equation}
w_T(p,q,0)>0,
\label{eq:8.1}
\end{equation}
where \(p\) and \(q\) count the internal vertices labeled
\(\text{\upshape P}\) and \(\text{\upshape S}\), respectively,
in any binary bracketing of \(T\).
\end{theorem}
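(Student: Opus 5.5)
We proceed by induction on the binary bracketing \(T'\) of \(T\); since \(K_{T'}=K_T\) by the multiary recursions and \(n_{T'}=n_T\), we may assume \(T\) is binary. For a leaf, \(K_e=1\) gives \(w_e(0,0,0)=1\), with \(p=q=0\). Let \(\mathcal C\) be the cone of functions on \(\mathbb Z_{\ge0}^2\) that are finite nonnegative rational combinations of
\[
\phi_{c,d,k}(a,b)=(a+b)^k\tbinom{a+c}{c}\tbinom{b+d}{d}.
\]
Say such a combination has degree at most \(m\) when \(c+d+k\le m\) for every term with positive weight. The inductive step then reduces to a closure lemma. If \(F\in\mathcal C\) has degree \(\le m_1\) and \(G\in\mathcal C\) has degree \(\le m_2\), then the parallel convolution \(\sum_{\zeta=0}^aF(\zeta,t-\zeta)G(a-\zeta,b+\zeta)\) lies in \(\mathcal C\) with degree \(\le m_1+m_2+1\). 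The series convolution from \eqref{eq:2.3} satisfies the same bound. Since \(n_{\text{\upshape P}(U,V)}-1=(n_U-1)+(n_V-1)+1\), this gives the degree bound. By bilinearity it suffices to convolve two basis elements \(\phi_{c,d,k}\) and \(\phi_{c',d',k'}\). In both arguments \(t\) is fixed, so the factor \(t^{k+k'}\) pulls out, and we are left with
\[
\sum_{\zeta=0}^{a}\tbinom{\zeta+c}{c}\tbinom{t-\zeta+d}{d}\tbinom{a-\zeta+c'}{c'}\tbinom{b+\zeta+d'}{d'}.
\]
Series convolution follows from parallel by the symmetry \(a\leftrightarrow b\). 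Indeed, \eqref{eq:2.3} is \eqref{eq:2.2} with the roles of the two parameters swapped, and the basis is symmetric under \((a,b,c,d)\mapsto(b,a,d,c)\).

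The main obstacle is showing this sum is a nonnegative combination of the \(\phi\)'s. First we would try rewriting each factor \(\binom{t-\zeta+d}{d}\) and \(\binom{b+\zeta+d'}{d'}\) through Vandermonde in a positive way. Write \(t-\zeta=(a-\zeta)+b\), so \(\binom{t-\zeta+d}{d}=\sum_j\binom{a-\zeta+j}{j}\binom{b+d-j}{d-j}\), which is nonnegative. Similarly \(b+\zeta\) splits as \(b\) plus \(\zeta\). After this expansion the summand becomes a product of a function of \(b\), binomials in \(\zeta\), and binomials in \(a-\zeta\). The \(\zeta\)-sum of \(\binom{\zeta+\alpha}{\alpha}\binom{a-\zeta+\beta}{\beta}\) equals \(\binom{a+\alpha+\beta+1}{\alpha+\beta+1}\). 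The difficulty is that a product like \(\binom{\zeta+c}{c}\binom{\zeta+j}{j}\) does not expand positively in the \(\binom{\zeta+\alpha}{\alpha}\) basis, and likewise \(\binom{b+d'-i}{\cdot}\binom{b+\cdot}{\cdot}\) in \(b\). Here the extra \((a+b)^k\) factors must absorb the discrepancy. If a direct combinatorial expansion fails, we would follow the paper's outline instead. Represent the convolution by a beta integral, using \(\sum_\zeta\binom{\zeta+\alpha}{\alpha}\binom{a-\zeta+\beta}{\beta}\propto\int_0^1\)-type kernels, and factor the integrand. Then prove by a recurrence in the exponents, derived from a differential identity, that the kernel coefficients are polynomials in \(t\) with nonnegative coefficients for each \(s\in[0,1]\). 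We expect this to be the hardest part.

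For \eqref{eq:8.1} we track leading terms. The convolution of \(\binom{a+c}{c}\) with constant \(b\)-part and \(\binom{a+c'}{c'}\) yields \(\binom{a+c+c'+1}{c+c'+1}\) with positive coefficient; that is, \(\phi_{c,0,0}\ast_{\text{\upshape P}}\phi_{c',0,0}\) contains \(\phi_{c+c'+1,0,0}\). More generally, we would check that \(\phi_{p_1,q_1,0}\ast_{\text{\upshape P}}\phi_{p_2,q_2,0}\) contains \(\phi_{p_1+p_2+1,q_1+q_2,0}\) with positive weight. Its series analogue contains \(\phi_{p_1+p_2,q_1+q_2+1,0}\). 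Because all other contributions are nonnegative, there is no cancellation, and positivity propagates up the tree.
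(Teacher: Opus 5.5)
Your reduction matches the paper's: pass to a binary bracketing and induct, pull out $t^{k+k'}$, and reduce by bilinearity to the four-binomial sum. That sum is the paper's $H_{c,d;e,f}$ with $(e,f)=(c',d')$, and the series case follows from the parallel one by the swap $(a,b,c,d)\mapsto(b,a,d,c)$.

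The gap is that the one step carrying all the content is never supplied: a nonnegative expansion of $H_{c,d;e,f}$ in the $\phi$'s, with the index control your degree bound needs. You rightly note that the Vandermonde route breaks down. (Your split is also off by one: the $b$-factor should be $\binom{b+d-j-1}{d-j}$, which for $j<d$ is not of the form $\binom{b+s}{s}$.) But your fallback names neither the kernel nor the differential identity. Three ingredients are missing.
\begin{enumerate}[label=(\roman*)]
\item The moment identity $\binom{\zeta+c}{c}\binom{a-\zeta+e}{e}=\binom{a+c+e+1}{c+e+1}\binom{a}{\zeta}\int_0^1u^\zeta v^{a-\zeta}\,\mathrm d\mu_{c,e}(u)$, with $v=1-u$ and $\mathrm d\mu_{c,e}=\frac{(c+e+1)!}{c!\,e!}u^cv^e\,\mathrm du$, combined with $\binom{t-\zeta+d}{d}\binom{b+\zeta+f}{f}=[x^dy^f](1-x)^{-(t-\zeta+1)}(1-y)^{-(b+\zeta+1)}$. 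The binomial theorem then collapses the $\zeta$-sum to
\[
H_{c,d;e,f}(a,b)=\binom{a+c+e+1}{c+e+1}[x^dy^f]\int_0^1R^{t+1}(1-ux-vy)^{-b-1}\,\mathrm d\mu_{c,e}(u),\qquad R=\frac{u}{1-y}+\frac{v}{1-x}.
\]
\item The observation that $(1-ux-vy)^{-b-1}=\sum_j\binom{b+j}{j}(ux+vy)^j$ supplies the $b$-binomials, while $R^{t+1}$ depends on $(a,b)$ only through $t$.
\item The lemma that $[x^my^n]R^{t+1}$ is a polynomial in $t$ of degree at most $m+n$ with nonnegative coefficients for every $u\in[0,1]$. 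It follows from $\log R(x,0)=\sum_{r\ge1}(1-u^r)x^r/r$ on the boundary and, for $m,n\ge1$, from the recurrence given by $\partial_x\partial_yR^{t+1}=t(t+1)uv\,R^{t+1}(1-ux-vy)^{-2}$.
\end{enumerate}
Together these give $H_{c,d;e,f}=\sum_{j\le d+f}A_j(t)\binom{a+c+e+1}{c+e+1}\binom{b+j}{j}$ with $A_j\in\mathbb Q_{\ge0}[t]$ and $\deg A_j\le d+f-j$. This is exactly what makes the output indices obey your bound $m_1+m_2+1$. Without it, neither nonnegativity nor the degree bound is established.

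The strict positivity \eqref{eq:8.1} inherits the same gap. The family $\phi_{c,d,k}$ is linearly dependent; for instance $\phi_{1,0,0}+\phi_{0,1,0}=\phi_{0,0,1}+2\phi_{0,0,0}$. So the claim that the parallel convolution of $\phi_{p_1,q_1,0}$ and $\phi_{p_2,q_2,0}$ ``contains'' $\phi_{p_1+p_2+1,q_1+q_2,0}$ with positive weight only makes sense relative to a fixed expansion rule, and you have not fixed one. Your ``we would check'' is precisely the unproved part.

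In the expansion above it is the $j=d+f$ term. Extracting $x^dy^f$ from $(ux+vy)^{d+f}R^{t+1}$ uses only the constant term $1$ of $R^{t+1}$. Hence $A_{d+f}=\binom{d+f}{d}\int_0^1u^dv^f\,\mathrm d\mu_{c,e}(u)>0$ is a positive constant, and your no-cancellation argument then propagates positivity up the tree. As written, though, the proposal assumes the positivity lemma the theorem rests on rather than proving it.
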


The proof of Theorem~\ref{thm:positive-expansion} occupies
Sections~\ref{subsec:positive-kernel}--\ref{subsec:tree-induction}.
The proof of Proposition~\ref{thm:four-chain} expresses the \(\text{\upshape P}\)-convolution
as a beta integral. The factorization \eqref{eq:6.5} reduces coefficient
nonnegativity to the kernel studied in Section~\ref{subsec:positive-kernel}.
Section~\ref{subsec:tree-induction} applies the convolution formula
at each internal vertex and proves \(w_T(p,q,0)>0\) as in
\eqref{eq:8.1}. The terms with these weights will be used to prove
strict Ehrhart positivity in Section~\ref{sec:coefficients-examples}.

\subsection{Nonnegativity of the kernel coefficients}\label{subsec:positive-kernel}\label{ux4e8cux53d8ux91cfux6b63ux6838ux5f15ux7406}\label{sec:4}

For a polynomial or formal power series \(G(z)\), we write
\([z^h]G(z)\) for the coefficient of \(z^h\).
Work in \(\mathbb Q[t,u][[x,y]]\), with \(v=1-u\), and define
\begin{equation}
R(x,y)=\frac{u}{1-y}+\frac{v}{1-x}
=\frac{1-ux-vy}{(1-x)(1-y)}.
\label{eq:4.1}
\end{equation}
Since \(R-1\) has zero constant term, we interpret the formal power
\(R^{t+1}\) by the binomial expansion
\begin{equation}
R^{t+1}=\sum_{r\ge0}\binom{t+1}{r}(R-1)^r.
\label{eq:formal-binomial-power}
\end{equation}
In \eqref{eq:formal-binomial-power}, only the summands
\(\binom{t+1}{r}(R-1)^r\) with \(r\le m+n\) contribute to the
coefficient of \(x^my^n\) in \(R^{t+1}\). Write
\begin{equation}
F(t;x,y;u)=R(x,y)^{t+1}
=\sum_{m,n\ge0}F_{m,n}(t;u)x^my^n.
\label{eq:4.2}
\end{equation}

For nonnegative integers \(m,n\), \eqref{eq:formal-binomial-power}
gives \(F_{m,n}(t;u)\in\mathbb Q[t,u]\) and
\(\deg_t F_{m,n}\le m+n\). Also, \(F_{0,0}=1\).
Regarding \(F_{m,n}(t;u)\) as a polynomial in \(t\), write
\begin{equation}
F_{m,n}(t;u)=\sum_{j=0}^{m+n}g_{m,n,j}(u)t^j,
\qquad g_{m,n,j}(u)\in\mathbb Q[u].
\label{eq:kernel-t-coefficients}
\end{equation}
\begin{lemma}\label{lem:positive-kernel}
For nonnegative integers \(m,n\), the polynomials
\(g_{m,n,j}(u)\) satisfy
\[
g_{m,n,j}(u)\ge0
\qquad(0\le j\le m+n,\ 0\le u\le1).
\]
For \(0<u<1\), we have:
\begin{enumerate}[label=(\roman*)]
\item If \(m+n>0\), then \(g_{m,n,j}(u)>0\) for
\(1\le j\le m+n\).
\item \(g_{m,n,0}(u)>0\) if \(m=0\) or \(n=0\), whereas
\(g_{m,n,0}(u)=0\) if \(m,n>0\).
\end{enumerate}
\end{lemma}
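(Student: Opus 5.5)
The plan is to avoid expanding \(\binom{t+1}{r}\) from \eqref{eq:formal-binomial-power} directly. Those binomials have mixed-sign coefficients in \(t\) once \(r\ge3\), so nonnegativity can only come from cancellation. Instead I would obtain the coefficients \(g_{m,n,j}\) from generating functions that satisfy a first-order differential recurrence with nonnegative kernels.

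\emph{Step 1: generating functions for the \(g_{m,n,j}\).} Put \(L=\log R\). This is a well-defined formal power series because \(R-1\) has zero constant term. Then \(R^{t+1}=R\exp(tL)\), and expanding in powers of \(t\) gives
\[
\sum_{m,n}g_{m,n,j}(u)\,x^my^n \;=\; H_j:=\frac{R\,L^j}{j!}.
\]
This agrees with \eqref{eq:formal-binomial-power}: both sides are polynomial in \(t\) coefficientwise and coincide at every nonnegative integer \(t\). Write \(h_{m,n,j}=[x^my^n]H_j=g_{m,n,j}\).

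\emph{Step 2: the differential identity.} Since \(L_x=R_x/R\), differentiating gives
\[
\partial_xH_j=\frac{R_x}{R}\,(H_j+H_{j-1}),\qquad
\partial_yH_j=\frac{R_y}{R}\,(H_j+H_{j-1}),
\]
with the convention \(H_{-1}=0\). A direct computation from \eqref{eq:4.1} gives the kernels
\[
\frac{R_x}{R}=\frac{v}{(1-x)(1-ux-vy)},\qquad
\frac{R_y}{R}=\frac{u}{(1-y)(1-ux-vy)}.
\]
For \(0\le u\le1\), both kernels have nonnegative coefficients. For \(0<u<1\), every coefficient is strictly positive.

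\emph{Step 3: induction on \(m+n\).} Comparing coefficients of \(x^{m-1}y^n\) in the \(\partial_x\)-identity when \(m>0\), or of \(x^my^{n-1}\) in the \(\partial_y\)-identity when \(m=0<n\), expresses \(m\,h_{m,n,j}\) or \(n\,h_{m,n,j}\) as a sum of kernel coefficients times \(h_{m',n',j}+h_{m',n',j-1}\), where \(m'+n'<m+n\). The base case is \(h_{0,0,0}=1\) and \(h_{0,0,j}=0\) for \(j\ge1\). Induction on \(m+n\) then gives \(g_{m,n,j}(u)\ge0\) on \([0,1]\).

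\emph{Step 4: strictness for \(0<u<1\).} For (ii), I would not use the recurrence. Directly, \(H_0=R=u/(1-y)+v/(1-x)\). Its pure coefficients are \(h_{m,0,0}=v\) and \(h_{0,n,0}=u\) for \(m,n>0\), together with \(h_{0,0,0}=1\). Its mixed coefficients vanish.

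For (i), I would induct on \(m+n\geq1\), with the claim that \(h_{m,n,j}>0\) for \(1\le j\le m+n\). Say \(m>0\); the case \(m=0\) is symmetric using \(\partial_y\). Keep only the kernel term of degree \((0,0)\), which has coefficient \(v>0\):
\[
m\,h_{m,n,j}\ge v\bigl(h_{m-1,n,j}+h_{m-1,n,j-1}\bigr).
\]
If \(j\le m-1+n\), the first term is positive by induction, since \(j\ge1\). If \(j=m+n\ge2\), the second term is positive by induction. If \(j=m+n=1\), the second term is \(h_{0,0,0}=1\).

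I expect the main obstacle to be Step 1: identifying the right object, \(R\,(\log R)^j/j!\), and justifying the interchange between the \(t\)-coefficient expansion and the formal binomial definition of \(R^{t+1}\). After that, the key observation is only that \(R_x/R\) and \(R_y/R\) have positive coefficients.
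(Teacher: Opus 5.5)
Your Step~1 is correct, and so is your proof of (ii) from $H_0=R$; the first-order identity $\partial_xH_j=(R_x/R)(H_j+H_{j-1})$ also holds. The gap is the kernel in Step~2. From $R_x=v/(1-x)^2$ and $R=(1-ux-vy)/((1-x)(1-y))$ you get
\[
\frac{R_x}{R}=\frac{v(1-y)}{(1-x)(1-ux-vy)},\qquad
\frac{R_y}{R}=\frac{u(1-x)}{(1-y)(1-ux-vy)}.
\]
The factors $1-y$ and $1-x$ do not cancel, and these kernels have negative coefficients. At $x=0$ the first becomes $v(1-y)/(1-vy)$, whose coefficient of $y^b$ is $-uv^b$ for every $b\ge1$. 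So for $m,n\ge1$ the recurrence you extract in Step~3 has terms of both signs. Neither the nonnegativity induction nor the bound $m\,h_{m,n,j}\ge v(h_{m-1,n,j}+h_{m-1,n,j-1})$ used for (i) follows.

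Your own part (ii) shows that the claimed kernel cannot be right. With your kernel, the $j=0$ recurrence would give $h_{1,1,0}=v\,h_{0,1,0}+v^2h_{0,0,0}=v>0$, whereas $h_{1,1,0}=[xy]R=0$. The true kernel coefficient $-uv$ at $x^0y^1$ is exactly what produces this cancellation. The negative terms are therefore essential, not something you can bound away.

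What your argument does prove is the boundary case. At $y=0$ the $x$-kernel reduces to $v/((1-x)(1-ux))$, and at $x=0$ the $y$-kernel reduces to $u/((1-y)(1-vy))$. Both have nonnegative coefficients, strictly positive for $0<u<1$. So your Steps~3 and~4 are valid when $n=0$ or $m=0$, and this is a workable substitute for the paper's exponential formula for $F(t;x,0;u)$.

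For $m,n\ge1$ you need an identity in which the bad factors cancel. Write $\partial_xH_j=R_x\bigl(L^j/j!+L^{j-1}/(j-1)!\bigr)$, differentiate in $y$, and use $R_{xy}=0$:
\[
\partial_x\partial_yH_j=\frac{R_xR_y}{R^2}\,(H_{j-1}+H_{j-2})
=\frac{uv}{(1-ux-vy)^2}\,(H_{j-1}+H_{j-2}).
\]
This kernel has nonnegative coefficients. The identity is the coefficient-of-$t^j$ form of the paper's identity $\partial_x\partial_yF=t(t+1)uv(1-ux-vy)^{-2}F$. It yields both the nonnegativity and (i). For $j=1,2$, pair the $x^{m-1}y^{n-1}$ kernel coefficient with $h_{0,0,0}=1$. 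For $3\le j\le m+n$, pair the constant kernel coefficient $uv$ with $h_{m-1,n-1,j-2}>0$.
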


\begin{proof}
Fix \(u\in[0,1]\). We prove the nonnegativity of
\(g_{m,n,j}(u)\), together with assertions (i) and (ii) when
\(0<u<1\), by induction on \(m+n\).

Setting \(y=0\) in \eqref{eq:4.1} gives
\(R(x,0)=(1-ux)/(1-x)\).

Substituting \(z=ux\) and \(z=x\) into the logarithmic series
\(\log(1-z)=-\sum_{r\ge1}z^r/r\) and subtracting gives
\[
\log R(x,0)=\log(1-ux)-\log(1-x)
=\sum_{r\ge1}\frac{1-u^r}{r}x^r.
\]
Hence
\begin{equation}
F(t;x,0;u)
=\exp\left((t+1)\sum_{r\ge1}\frac{1-u^r}{r}x^r\right).
\label{eq:kernel-boundary-exponential}
\end{equation}
Substituting \(z=(t+1)\sum_{r\ge1}(1-u^r)x^r/r\) into
\(\exp z=\sum_{k\ge0}z^k/k!\) and taking the coefficient of
\(x^m\) in \eqref{eq:kernel-boundary-exponential} gives
\begin{equation}
F_{m,0}(t;u)=\sum_{k=0}^{m}\frac{(t+1)^k}{k!}
[x^m]\left(\sum_{r\ge1}\frac{1-u^r}{r}x^r\right)^k.
\label{eq:kernel-boundary-coefficients}
\end{equation}
In \eqref{eq:kernel-boundary-coefficients}, the sum stops at \(k=m\):
the series \(\sum_{r\ge1}(1-u^r)x^r/r\) has zero constant term,
so its \(k\)-th power cannot contribute to \(x^m\) when \(k>m\).
Since \(0\le u\le1\), every coefficient \((1-u^r)/r\) is
nonnegative. Every power of \(\sum_{r\ge1}(1-u^r)x^r/r\) therefore has
nonnegative coefficients, so each coefficient extracted in
\eqref{eq:kernel-boundary-coefficients} is a nonnegative number.
Since \(k!>0\), this expresses \(F_{m,0}(t;u)\) as a nonnegative
linear combination of \((t+1)^k\). Each polynomial \((t+1)^k\) has
nonnegative coefficients in \(t\), so \(F_{m,0}(t;u)\) does as well.
If \(0<u<1\), the term \(k=m\) in
\eqref{eq:kernel-boundary-coefficients} is
\(v^m(t+1)^m/m!\), which has strictly positive coefficients
of \(t^0,\ldots,t^m\).
Interchanging \(x,y\) and \(u,v\) proves the assertions for
\(F_{0,n}\).

To treat \(m,n\ge1\), we take partial derivatives of \(R(x,y)\)
in \eqref{eq:4.1} with respect to \(x\) and \(y\) to obtain
\(R_{xy}=0\), \(R_x=v/(1-x)^2\), and \(R_y=u/(1-y)^2\).
Hence
\begin{equation}
\partial_x\partial_yF
=t(t+1)R^{t-1}R_xR_y
=\frac{t(t+1)uv}{(1-ux-vy)^2}F.
\label{eq:4.6}
\end{equation}
Substituting \(z=ux+vy\) into
\((1-z)^{-2}=\sum_{r\ge0}(r+1)z^r\), expanding \((ux+vy)^r\)
by the binomial theorem, and taking the coefficient of
\(x^{m-1}y^{n-1}\) in \eqref{eq:4.6} gives, for \(m,n\ge1\),
\begin{equation}
mnF_{m,n}(t;u)
=t(t+1)uv
\sum_{\substack{0\le h<m\\0\le k<n}}
(h+k+1)\binom{h+k}{h}u^hv^k
F_{m-1-h,n-1-k}(t;u).
\label{eq:kernel-coefficient-recurrence}
\end{equation}
Every polynomial \(F_{m-1-h,n-1-k}\) on the right has index sum
smaller than \(m+n\), so the induction hypothesis applies.
Since \(u,v\ge0\), the multiplier
\(t(t+1)(h+k+1)\binom{h+k}{h}u^{h+1}v^{k+1}\) also has
nonnegative coefficients in \(t\). Hence \(F_{m,n}\) has nonnegative
coefficients in \(t\).

Suppose now that \(0<u<1\). Every summand on the right of
\eqref{eq:kernel-coefficient-recurrence} has nonnegative
coefficients in \(t\). Retaining the term with \(h=m-1\) and
\(k=n-1\), and using \(F_{0,0}=1\), gives
\[
g_{m,n,j}(u)\ge
\frac{m+n-1}{mn}\binom{m+n-2}{m-1}u^mv^n>0
\qquad(j=1,2).
\]
For \(3\le j\le m+n\), the induction hypothesis gives
\(g_{m-1,n-1,j-2}(u)>0\), since
\(1\le j-2\le(m-1)+(n-1)\).
The \(h=k=0\) summand in \eqref{eq:kernel-coefficient-recurrence}
is \(uv\,t(t+1)F_{m-1,n-1}(t;u)\), so
\[
g_{m,n,j}(u)\ge\frac{uv}{mn}g_{m-1,n-1,j-2}(u)>0
\qquad(3\le j\le m+n).
\]

Finally, setting \(t=0\) in \eqref{eq:4.2} gives
\(F(0;x,y;u)=R(x,y)=\frac{u}{1-y}+\frac{v}{1-x}\).
The first fraction depends only on \(y\), and the second only on
\(x\). Hence, for \(m,n\ge1\),
\[
g_{m,n,0}(u)=[x^my^n]R(x,y)=0.
\]
Together with the strict positivity of \(g_{m,0,0}(u)\) and
\(g_{0,n,0}(u)\) established from
\eqref{eq:kernel-boundary-coefficients} for \({0<u<1}\), this proves (ii).
\end{proof}

\subsection{A beta integral for parallel convolution}\label{subsec:convolution}\label{ux4efbux610fux56dbux94feux6838ux7684ux6b63ux95edux5305}\label{sec:6}\label{subsec:beta-identity}\label{beta-ux6052ux7b49ux5f0fux7cbeux786eux7f16ux7801ux79bbux6563ux8d1fux8f7dux5206ux5272}\label{sec:5}

Let \(a,b,c,d,e,f\) be nonnegative integers and put \(t=a+b\).
The \(\text{\upshape P}\)-recursion \eqref{eq:2.2}, applied to
\(\binom{a+c}{c}\binom{b+d}{d}\) and
\(\binom{a+e}{e}\binom{b+f}{f}\), gives the sum
\begin{equation}
H_{c,d;e,f}(a,b)=\sum_{\zeta=0}^{a}
\binom{\zeta+c}{c}\binom{t-\zeta+d}{d}
\binom{a-\zeta+e}{e}\binom{b+\zeta+f}{f}.
\label{eq:6.1}
\end{equation}
We will express \(H_{c,d;e,f}(a,b)\) as an integral involving
the kernel \(F\). Define a probability measure \(\mu_{c,e}\) on \([0,1]\) by
\begin{equation}
\mathrm d\mu_{c,e}(u)=\frac{(c+e+1)!}{c!e!}u^c(1-u)^e\,\mathrm du.
\label{eq:5.1}
\end{equation}
For nonnegative integers \(A,B\), integration by parts gives
\[
\int_0^1u^A(1-u)^B\,\mathrm du
=\frac{A!B!}{(A+B+1)!}.
\]
For nonnegative integers \(p,q\), substitute \(A=c+p\) and
\(B=e+q\) into this formula and multiply by the normalization
factor \((c+e+1)!/(c!e!)\) in \eqref{eq:5.1}. This gives
\begin{equation}
\int_0^1u^p(1-u)^q\,\mathrm d\mu_{c,e}(u)
=\frac{(c+e+1)!(c+p)!(e+q)!}
{c!e!(c+e+p+q+1)!}>0.
\label{eq:5.2}
\end{equation}

\begin{proposition}\label{thm:four-chain}
For nonnegative integers \(c,d,e,f\), there are polynomials
\(A^{c,e}_{d,f;j}(t)\in\mathbb Q_{\ge0}[t]\), indexed by
\(0\le j\le d+f\), such that, for all nonnegative integers \(a,b\),
\begin{equation}
\begin{aligned}
H_{c,d;e,f}(a,b)
&=\sum_{j=0}^{d+f}A^{c,e}_{d,f;j}(a+b)
\binom{a+c+e+1}{c+e+1}\binom{b+j}{j},\\
\deg A^{c,e}_{d,f;j}&\le d+f-j.
\end{aligned}
\label{eq:6.2}
\end{equation}
\end{proposition}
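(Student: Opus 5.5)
The plan is to realize the inner sum over \(\zeta\) as an average against a beta-binomial law, so that the \(c,e\)-factors produce \(\binom{a+c+e+1}{c+e+1}\) and the \(d,f\)-factors become a coefficient extraction from the kernel \(F\) of Section~\ref{subsec:positive-kernel}. Write \(v=1-u\).

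\textbf{Step 1 (beta-binomial identity).} For \(0\le\zeta\le a\), I would use \eqref{eq:5.2} with \(p=\zeta\) and \(q=a-\zeta\). This gives
\[
\int_0^1\binom{a}{\zeta}u^\zeta v^{a-\zeta}\,\mathrm d\mu_{c,e}(u)
=\frac{\binom{\zeta+c}{c}\binom{a-\zeta+e}{e}}{\binom{a+c+e+1}{c+e+1}},
\]
which is a direct factorial check. Substituting into \eqref{eq:6.1} yields
\[
H_{c,d;e,f}(a,b)=\binom{a+c+e+1}{c+e+1}\int_0^1\sum_{\zeta=0}^a\binom{a}{\zeta}u^\zeta v^{a-\zeta}\binom{t-\zeta+d}{d}\binom{b+\zeta+f}{f}\,\mathrm d\mu_{c,e}(u).
\]

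\textbf{Step 2 (generating functions).} I would write
\[
\binom{t-\zeta+d}{d}=[x^d](1-x)^{-(t-\zeta+1)},\qquad
\binom{b+\zeta+f}{f}=[y^f](1-y)^{-(b+\zeta+1)}.
\]
The sum over \(\zeta\) is then binomial, and the integrand equals
\[
[x^dy^f]\;\Bigl(\tfrac{u(1-x)}{1-y}+v\Bigr)^a(1-x)^{-(t+1)}(1-y)^{-(b+1)}
=[x^dy^f]\;R^a\bigl((1-x)(1-y)\bigr)^{-(b+1)},
\]
with \(R\) as in \eqref{eq:4.1}. Using \(a=t-b\) and \(R(1-x)(1-y)=1-ux-vy\), this rewrites as
\[
[x^dy^f]\;F(t;x,y;u)\,(1-ux-vy)^{-(b+1)}.
\]
Here \(F\) is as in \eqref{eq:4.2}, with \(t=a+b\) a nonnegative integer, so all identities are genuine identities of power series in \(x,y\).

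\textbf{Step 3 (expansion and coefficients).} I would expand
\[
(1-ux-vy)^{-(b+1)}=\sum_{r\ge0}\binom{b+r}{r}(ux+vy)^r
\]
and collect the terms with \(r=j\). This gives
\[
A^{c,e}_{d,f;j}(t)=\sum_{\substack{m+n=j\\ m\le d,\ n\le f}}\binom{j}{m}\int_0^1u^mv^n\,F_{d-m,f-n}(t;u)\,\mathrm d\mu_{c,e}(u).
\]
By \eqref{eq:kernel-t-coefficients}, the degree bound \(\deg_tF_{d-m,f-n}\le d+f-j\) holds. By Lemma~\ref{lem:positive-kernel}, each \(t\)-coefficient \(g_{d-m,f-n,i}(u)\) is nonnegative on \([0,1]\), and \(\mu_{c,e}\) is a positive measure. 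Hence every coefficient of \(A^{c,e}_{d,f;j}\) is a nonnegative rational: it is a finite sum of integrals of polynomials in \(u\) against \(\mu_{c,e}\), and each such integral is rational by \eqref{eq:5.2}.

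The main obstacle is Step 2. One must spot that the dependence on \(a\) can be absorbed into \(R^{t+1}\), leaving the factor \((1-ux-vy)^{-(b+1)}\). This is exactly the factorization that places all sign-sensitive information in the kernel \(F\), whose positivity was already established in Lemma~\ref{lem:positive-kernel}.
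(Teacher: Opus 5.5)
Your proposal is correct and follows essentially the same route as the paper. The moment identity \eqref{eq:5.2} turns the \(c,e\)-factors into \(\binom{a+c+e+1}{c+e+1}\) times an integral against \(\mu_{c,e}\), and coefficient extraction handles the \(d,f\)-factors. The integrand then factors as \(F(t;x,y;u)(1-ux-vy)^{-b-1}\), exactly as in \eqref{eq:6.5}. Lemma~\ref{lem:positive-kernel} together with the moment formula gives nonnegative rational coefficients, and the degree bound follows from \(\deg_t F_{m,n}\le m+n\).

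The only difference is cosmetic. You sum the \(\zeta\)-binomial into \(R^a\bigl((1-x)(1-y)\bigr)^{-(b+1)}\) before factoring, whereas the paper writes \(1-ux-vy=u(1-x)+v(1-y)\) and expands. Your explicit formula for \(A^{c,e}_{d,f;j}\) in terms of \(F_{d-m,f-n}\) is just the coefficient extraction in \eqref{eq:6.7} written out.
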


\begin{proof}
With \(v=1-u\), we claim that
\begin{equation}
H_{c,d;e,f}(a,b)
=\binom{a+c+e+1}{c+e+1}[x^dy^f]
\int_0^1
\frac{(1-ux-vy)^a}{(1-x)^{t+1}(1-y)^{t+1}}\,
\mathrm d\mu_{c,e}(u).
\label{eq:beta-convolution-integral}
\end{equation}
To verify \eqref{eq:beta-convolution-integral}, write
\(1-ux-vy=u(1-x)+v(1-y)\). Expanding the \(a\)-th power
by the binomial theorem and using \(t=a+b\) gives
\[
\frac{(1-ux-vy)^a}{(1-x)^{t+1}(1-y)^{t+1}}
=\sum_{\zeta=0}^{a}\binom{a}{\zeta}
\frac{u^{\zeta}v^{a-\zeta}}
{(1-x)^{t-\zeta+1}(1-y)^{b+\zeta+1}}.
\]
For each integer \(\zeta\) with \(0\le\zeta\le a\), coefficient
extraction gives
\[
[x^dy^f](1-x)^{-(t-\zeta+1)}(1-y)^{-(b+\zeta+1)}
=\binom{t-\zeta+d}{d}\binom{b+\zeta+f}{f}.
\]
Applying \eqref{eq:5.2} with \(p=\zeta\) and \(q=a-\zeta\) gives
\[
\binom{a+c+e+1}{c+e+1}\binom{a}{\zeta}
\int_0^1u^{\zeta}v^{a-\zeta}\,\mathrm d\mu_{c,e}(u)
=\binom{\zeta+c}{c}\binom{a-\zeta+e}{e}.
\]
Multiplying the coefficient and moment identities and summing over
\(\zeta=0,\ldots,a\) recovers \eqref{eq:6.1}, proving
\eqref{eq:beta-convolution-integral}.

Using \(a=t-b\) and the definition of \(F\) in \eqref{eq:4.2},
we factor the integrand in \eqref{eq:beta-convolution-integral} as
\begin{equation}
\frac{(1-ux-vy)^a}{(1-x)^{t+1}(1-y)^{t+1}}
=F(t;x,y;u)(1-ux-vy)^{-b-1}.
\label{eq:6.5}
\end{equation}
The binomial theorem gives
\((1-ux-vy)^{-b-1}=\sum_{j\ge0}\binom{b+j}{j}(ux+vy)^j\).
Only the terms with \(0\le j\le d+f\) in the expansion of
\((1-ux-vy)^{-b-1}\) contribute to the coefficient of \(x^dy^f\)
after multiplication
by \(F\): every monomial in
\((ux+vy)^j\) has total degree \(j\), and \(F\) contains only
nonnegative powers of \(x,y\). For these values of \(j\), set
\begin{equation}
A^{c,e}_{d,f;j}(t)
=\int_0^1[x^dy^f](ux+vy)^jF(t;x,y;u)\,
\mathrm d\mu_{c,e}(u).
\label{eq:6.7}
\end{equation}
Substituting \(t=a+b\) and the binomial expansion of
\((1-ux-vy)^{-b-1}\) into \eqref{eq:beta-convolution-integral}
gives the expansion in
\eqref{eq:6.2}.

For \(0\le u\le1\), Lemma~\ref{lem:positive-kernel} and the
nonnegative coefficients of \((ux+vy)^j\) in \(x,y\) show that
\([x^dy^f](ux+vy)^jF(t;x,y;u)\) has nonnegative coefficients
in \(t\). Since the density of \(\mu_{c,e}\) is nonnegative on
\([0,1]\), integrating each coefficient in \(t\) gives a nonnegative
coefficient of \(A^{c,e}_{d,f;j}(t)\). The moment formula
\eqref{eq:5.2} gives rational coefficients. Thus
\(A^{c,e}_{d,f;j}(t)\in\mathbb Q_{\ge0}[t]\).

Only \(F_{m,n}(t;u)\) with \(m+n=d+f-j\) contribute to
\([x^dy^f](ux+vy)^jF\). Their degrees in \(t\) are at most
\(d+f-j\) by \eqref{eq:formal-binomial-power} and \eqref{eq:4.2}. Integration
with respect to \(u\) changes the coefficients but introduces
no new powers of \(t\). Hence
\(\deg A^{c,e}_{d,f;j}\le d+f-j\).
\end{proof}

\subsection{Convolution closure and the nonnegative expansion}\label{subsec:tree-induction}\label{ux5bf9ux4efbux610fux6811ux5b8cux6210ux7ed3ux6784ux5f52ux7eb3}\label{sec:7}

Motivated by the recursions for
\(K_{\text{\upshape P}(U,V)}\) and \(K_{\text{\upshape S}(U,V)}\)
in \eqref{eq:2.2} and \eqref{eq:2.3}, we define the parallel
and series convolution operators
\[
\text{\upshape P},\text{\upshape S}:
\mathbb Q[a,b]\times\mathbb Q[a,b]
\longrightarrow\mathbb Q[a,b].
\]
For \(G,H\in\mathbb Q[a,b]\) and \(a,b\in\mathbb Z_{\ge0}\),
put \(t=a+b\) and set
\[
\begin{aligned}
\text{\upshape P}(G,H)(a,b)
&=\sum_{\zeta=0}^{a}G(\zeta,t-\zeta)\,H(a-\zeta,b+\zeta),\\
\text{\upshape S}(G,H)(a,b)
&=\sum_{\zeta=0}^{b}G(t-\zeta,\zeta)\,H(a+\zeta,b-\zeta).
\end{aligned}
\]
By the polynomiality of power sums, both finite sums extend uniquely
to polynomials in \(a,b\).

\begin{definition}\label{def:weak-chain-polynomials}
Let \(\mathcal C_{a,b}\subseteq\mathbb Q[a,b]\) be the class
\begin{equation}
\mathcal C_{a,b}=
\left\{
\sum_{i=1}^{N}\alpha_i(a+b)^{k_i}
\binom{a+c_i}{c_i}\binom{b+d_i}{d_i}
\;\middle|\;
\substack{N\in\mathbb Z_{\ge1},\ \alpha_i\in\mathbb Q_{\ge0}\\
c_i,d_i,k_i\in\mathbb Z_{\ge0}\ (1\le i\le N)}
\right\}.
\label{eq:3.2}
\end{equation}
\end{definition}

By Definition~\ref{def:weak-chain-polynomials}, \(\mathcal C_{a,b}\)
is closed under multiplication by \((a+b)^k\) for \(k\in\mathbb Z_{\ge0}\)
and finite nonnegative rational linear combinations.
For convenience, write
\[
\Theta_{a,b}(r,s,k)=(a+b)^k\binom{a+r}{r}\binom{b+s}{s},
\qquad r,s,k\in\mathbb Z_{\ge0}.
\]

\begin{lemma}\label{lem:convolution-closure}
The class \(\mathcal C_{a,b}\) is closed under both convolutions:
\begin{equation}
\text{\upshape P}(\mathcal C_{a,b},\mathcal C_{a,b})\subseteq\mathcal C_{a,b},
\qquad
\text{\upshape S}(\mathcal C_{a,b},\mathcal C_{a,b})\subseteq\mathcal C_{a,b}.
\label{eq:7.1}
\end{equation}
\end{lemma}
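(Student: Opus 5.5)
By bilinearity of both convolutions and the closure of \(\mathcal C_{a,b}\) under nonnegative rational combinations, it suffices to treat generators. I take \(G=\Theta_{a,b}(c,d,k)\) and \(H=\Theta_{a,b}(e,f,l)\) and show that \(\text{\upshape P}(G,H)\) and \(\text{\upshape S}(G,H)\) lie in \(\mathcal C_{a,b}\). Since the convolutions are defined as polynomial extensions of finite sums, it is enough to establish the identities for \(a,b\in\mathbb Z_{\ge0}\).

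\textbf{The parallel case.} The key observation is that in \(\text{\upshape P}(G,H)(a,b)\) both arguments of \(G\) sum to \(t\), and likewise for \(H\). Hence the factors \((a+b)^k\) and \((a+b)^l\) evaluate to the constant \(t^{k+l}\) and can be pulled out of the sum. The remaining sum is exactly \(H_{c,d;e,f}(a,b)\) from \eqref{eq:6.1}, so
\[
\text{\upshape P}(G,H)(a,b)=t^{k+l}H_{c,d;e,f}(a,b).
\]
Proposition~\ref{thm:four-chain} then writes \(H_{c,d;e,f}\) as \(\sum_j A^{c,e}_{d,f;j}(a+b)\binom{a+c+e+1}{c+e+1}\binom{b+j}{j}\) with \(A^{c,e}_{d,f;j}\in\mathbb Q_{\ge0}[t]\). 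Expanding each \(A^{c,e}_{d,f;j}(t)\,t^{k+l}\) into monomials \(t^m\) with nonnegative coefficients displays the whole expression as a nonnegative combination of terms \(\Theta_{a,b}(c+e+1,j,m)\). This lies in \(\mathcal C_{a,b}\).

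\textbf{The series case.} I reduce it to the parallel case via the swap \(\sigma\colon G(a,b)\mapsto G(b,a)\). Direct substitution in the definitions gives
\[
\text{\upshape S}(G,H)(a,b)=\text{\upshape P}(\sigma G,\sigma H)(b,a).
\]
To check this, write \(\zeta\) for the summation index and note that \(G(t-\zeta,\zeta)=(\sigma G)(\zeta,t-\zeta)\) and \(H(a+\zeta,b-\zeta)=(\sigma H)(b-\zeta,a+\zeta)\); the resulting sum is \(\text{\upshape P}(\sigma G,\sigma H)\) evaluated at \((b,a)\). Since \(\sigma\) maps \(\Theta_{a,b}(r,s,k)\) to \(\Theta_{a,b}(s,r,k)\), it preserves \(\mathcal C_{a,b}\). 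Therefore \(\text{\upshape S}(G,H)=\sigma\,\text{\upshape P}(\sigma G,\sigma H)\in\mathcal C_{a,b}\) by the parallel case.

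\textbf{Main point of care.} All the analytic difficulty already sits in Proposition~\ref{thm:four-chain} and Lemma~\ref{lem:positive-kernel}, so the real step here is the observation that the \((a+b)^k\) factors are constant along the convolution. Beyond that, two checks remain. First, I must verify the index bookkeeping in the swap identity. Second, I must confirm that uniqueness of polynomial extension lets the integer-point identities pass to equalities in \(\mathbb Q[a,b]\); this holds because two polynomials agreeing on \(\mathbb Z_{\ge0}^2\) are equal.
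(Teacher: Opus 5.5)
Your proposal is correct and follows the paper's proof essentially step for step. You reduce to generators by bilinearity, pull the constant factor $t^{k+l}$ out of the parallel sum to obtain $t^{k+l}H_{c,d;e,f}(a,b)$, and expand via Proposition~\ref{thm:four-chain}; you then reduce the series case to the parallel one through the swap identity $\text{\upshape S}(G,H)=\sigma\,\text{\upshape P}(\sigma G,\sigma H)$, which is the paper's \eqref{eq:7.2} with $\iota$ in place of your $\sigma$.
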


\begin{proof}
We first compute \(\text{\upshape P}(G,H)\) for
\(G=\Theta_{a,b}(c,d,k_1)\) and \(H=\Theta_{a,b}(e,f,k_2)\),
where \(c,d,e,f,k_1,k_2\) are nonnegative integers.
For \(t=a+b\), taking the factor \(t^{k_1+k_2}\) outside the
convolution sum gives \(\text{\upshape P}(G,H)(a,b)=t^{k_1+k_2}H_{c,d;e,f}(a,b)\).
Expanding each coefficient polynomial \(A^{c,e}_{d,f;j}(t)\) in
\eqref{eq:6.2} in powers of \(t\) gives the output terms
\begin{equation}
\Theta_{a,b}(c+e+1,j,k_1+k_2+\ell)
\quad\text{with weight}\quad [t^\ell]A^{c,e}_{d,f;j}(t),
\label{eq:parallel-output-rule}
\end{equation}
where \(0\le j\le d+f\) and \(0\le\ell\le d+f-j\).
Proposition~\ref{thm:four-chain} ensures that the weights in
\eqref{eq:parallel-output-rule} are nonnegative rational numbers.

Define \(\iota:\mathbb Q[a,b]\to\mathbb Q[a,b]\) by
\((\iota F)(a,b)=F(b,a)\).
Since \(\iota\Theta_{a,b}(c,d,k)=\Theta_{a,b}(d,c,k)\),
the map \(\iota\) preserves \(\mathcal C_{a,b}\).
For arbitrary \(G,H\in\mathbb Q[a,b]\), substituting the exchanged
arguments into the defining convolution sums gives
\begin{equation}
\text{\upshape S}(G,H)=\iota\,\text{\upshape P}(\iota G,\iota H).
\label{eq:7.2}
\end{equation}
For \(G=\Theta_{a,b}(c,d,k_1)\) and \(H=\Theta_{a,b}(e,f,k_2)\),
we have \(\iota G=\Theta_{a,b}(d,c,k_1)\) and
\(\iota H=\Theta_{a,b}(f,e,k_2)\).
Applying \eqref{eq:parallel-output-rule} to \(\iota G,\iota H\),
and then applying \(\iota\) to each output term as in
\eqref{eq:7.2}, gives
\begin{equation}
\Theta_{a,b}(j,d+f+1,k_1+k_2+\ell)
\quad\text{with weight}\quad [t^\ell]A^{d,f}_{c,e;j}(t),
\label{eq:series-output-rule}
\end{equation}
where \(0\le j\le c+e\) and \(0\le\ell\le c+e-j\).
The weights in \eqref{eq:series-output-rule} are likewise
nonnegative rational numbers.

For general \(G,H\in\mathcal C_{a,b}\), choose finite expansions
\[
G=\sum_{i=1}^{N}\alpha_i\Theta_{a,b}(c_i,d_i,k_i),
\qquad
H=\sum_{h=1}^{M}\beta_h\Theta_{a,b}(e_h,f_h,\ell_h),
\]
where \(N,M\) are positive integers,
\(\alpha_i,\beta_h\in\mathbb Q_{\ge0}\), and
\(c_i,d_i,k_i,e_h,f_h,\ell_h\in\mathbb Z_{\ge0}\)
for \(1\le i\le N\) and \(1\le h\le M\).
Substitute the expansions into the convolution sums and distribute
the products. For each pair \(i,h\), convolve
\(\Theta_{a,b}(c_i,d_i,k_i)\) and \(\Theta_{a,b}(e_h,f_h,\ell_h)\)
using \eqref{eq:parallel-output-rule} for \(\text{\upshape P}\)
or \eqref{eq:series-output-rule} for \(\text{\upshape S}\),
and multiply each resulting weight by \(\alpha_i\beta_h\).
There are \(NM\) pairs of input terms, and each pair produces
finitely many terms with nonnegative rational weights after
multiplication by \(\alpha_i\beta_h\).
Thus \(\text{\upshape P}(G,H)\) and \(\text{\upshape S}(G,H)\)
are finite nonnegative rational linear combinations of the
polynomials \(\Theta_{a,b}(r,s,k)\), so both belong to \(\mathcal C_{a,b}\).
\end{proof}

\begin{proof}[Proof of Theorem~\ref{thm:positive-expansion}]
Fix a binary bracketing of \(T\). We compute the weights
\(w_T(c,d,k)\) by induction on the number of leaves.
For a leaf \(e\), the identity
\(K_e(a,b)=1\) gives the expansion with \(w_e(0,0,0)=1\) and
all other weights zero. Suppose that the counts \(K_U,K_V\) for
the two child subtrees \(U,V\) have expansions of the form
\eqref{eq:3.3}, with nonnegative rational weights \(w_U\) and \(w_V\)
of finite support.

The counting recursions \eqref{eq:2.2} and \eqref{eq:2.3} give
\(K_{\text{\upshape P}(U,V)}=\text{\upshape P}(K_U,K_V)\) and
\(K_{\text{\upshape S}(U,V)}=\text{\upshape S}(K_U,K_V)\), respectively.
Substitute the expansions of \(K_U,K_V\) into
\(\text{\upshape P}(K_U,K_V)\) and \(\text{\upshape S}(K_U,K_V)\),
and apply \eqref{eq:parallel-output-rule} and \eqref{eq:series-output-rule}
to each pair of summands.
Collecting the output terms with the same triple \((r,s,k)\) gives
\[
K_T(a,b)=\sum_{r,s,k\ge0}w_T(r,s,k)\Theta_{a,b}(r,s,k).
\]
For \(T=\text{\upshape P}(U,V)\), the resulting weights are
\[
w_T(r,s,k)=
\sum_{\substack{c,d,e,f,k_1,k_2,\ell\ge0\\
c+e+1=r,\ k_1+k_2+\ell=k\\
s+\ell\le d+f}}
w_U(c,d,k_1)w_V(e,f,k_2)
[t^\ell]A^{c,e}_{d,f;s}(t).
\]
For \(T=\text{\upshape S}(U,V)\), the resulting weights are
\[
w_T(r,s,k)=
\sum_{\substack{c,d,e,f,k_1,k_2,\ell\ge0\\
d+f+1=s,\ k_1+k_2+\ell=k\\
r+\ell\le c+e}}
w_U(c,d,k_1)w_V(e,f,k_2)
[t^\ell]A^{d,f}_{c,e;r}(t).
\]
All summation indices are nonnegative integers.
The finite expansions of \(K_U,K_V\) and the bounds on the output
indices in \eqref{eq:parallel-output-rule} and \eqref{eq:series-output-rule}
ensure that only finitely many \(w_T(r,s,k)\) are nonzero.
Every summand in the formulas for \(w_T(r,s,k)\) is a nonnegative
rational number by Proposition~\ref{thm:four-chain}.

By \eqref{eq:multiary-p-recursion} and
\eqref{eq:multiary-s-recursion}, the expansion obtained from the
binary bracketing also represents \(K_T\) for the original tree \(T\).

For the degree bound, we induct on the number of leaves.
For a leaf \(e\), the only nonzero weight is \(w_e(0,0,0)=1\),
and \(0+0+0=n_e-1\).
For the inductive step, suppose \(w_U(c,d,k_1)w_V(e,f,k_2)>0\).
By induction, \(c+d+k_1\le n_U-1\) and \(e+f+k_2\le n_V-1\).
If \(T=\text{\upshape P}(U,V)\), a nonzero summand in the formula for \(w_T(r,s,k)\)
has \(r=c+e+1\), \(k=k_1+k_2+\ell\), and \(s+\ell\le d+f\), so
\begin{equation}
\begin{aligned}
r+s+k&=(c+e+1)+s+k_1+k_2+\ell\\
&\le c+d+e+f+k_1+k_2+1\\
&\le n_U+n_V-1=n_T-1.
\end{aligned}
\label{eq:7.3}
\end{equation}
For \(T=\text{\upshape S}(U,V)\), a nonzero summand has
\(s=d+f+1\), \(k=k_1+k_2+\ell\), and \(r+\ell\le c+e\),
which likewise give \(r+s+k\le n_T-1\).
Induction therefore proves that
\(w_T(c,d,k)=0\) whenever \(c+d+k>n_T-1\).

To prove \eqref{eq:8.1}, we induct on the number of leaves in
the chosen binary bracketing.
For a leaf, the claim follows from \(w_e(0,0,0)=1\).
For the inductive step, let \(U,V\) be the two child subtrees.
Let \(c\) and \(d\) be the numbers of internal vertices of \(U\)
labeled \(\text{\upshape P}\) and \(\text{\upshape S}\), respectively, and define \(e,f\) in
the same way for \(V\). Suppose inductively
that \(w_U(c,d,0)>0\) and \(w_V(e,f,0)>0\).
At \(T=\text{\upshape P}(U,V)\), take \(j=d+f\) in \eqref{eq:6.7}
so that \(A^{c,e}_{d,f;d+f}(t)\) is a positive constant and contributes
to \(w_T(c+e+1,d+f,0)\) without increasing the exponent of \(a+b\).
The polynomial \((ux+(1-u)y)^{d+f}\) has total degree \(d+f\),
so extracting \(x^dy^f\) from \((ux+(1-u)y)^{d+f}F(t;x,y;u)\)
uses only the constant term \(F_{0,0}=1\). Hence
\[
A^{c,e}_{d,f;d+f}(t)
=\binom{d+f}{d}\int_0^1u^d(1-u)^f\,\mathrm d\mu_{c,e}(u)>0,
\]
where positivity follows from \eqref{eq:5.2}.
Taking \(j=d+f\) and \(\ell=0\) in the parallel output rule
\eqref{eq:parallel-output-rule} gives
\[
w_T(c+e+1,d+f,0)
\ge w_U(c,d,0)w_V(e,f,0)
[t^0]A^{c,e}_{d,f;d+f}(t)>0.
\]
The tree \(T=\text{\upshape P}(U,V)\) has \(c+e+1\) internal vertices labeled
\(\text{\upshape P}\) and \(d+f\) labeled \(\text{\upshape S}\).

\Needspace{7\baselineskip}
For \(T=\text{\upshape S}(U,V)\), the same integral calculation with
\((c,d,e,f)\) replaced by \((d,c,f,e)\) gives
\([t^0]A^{d,f}_{c,e;c+e}(t)>0\).
Taking \(j=c+e\) and \(\ell=0\) in \eqref{eq:series-output-rule} gives
\[
w_T(c+e,d+f+1,0)
\ge w_U(c,d,0)w_V(e,f,0)
[t^0]A^{d,f}_{c,e;c+e}(t)>0.
\]
The tree \(T=\text{\upshape S}(U,V)\) has \(c+e\) internal vertices labeled
\(\text{\upshape P}\) and \(d+f+1\) labeled \(\text{\upshape S}\).
Thus \(w_T(p,q,0)>0\), proving \eqref{eq:8.1}.

Thus \eqref{eq:3.3} represents \(K_T\) by a polynomial in
\(\mathcal C_{a,b}\subseteq\mathbb Q_{\ge0}[a,b]\).
\end{proof}

\Needspace{10\baselineskip}
\begin{proof}[Proof of Theorem~\ref{thm:main-introduction}]
Suppose first that \(M\) is connected and has at least two elements.
Choose an \text{\upshape S-P} tree \(T\) recording a construction of \(M\) as in
Section~\ref{subsec:construction-trees}.
Proposition~\ref{prop:matroid-realization} gives
\(\mathcal P(M)=\mathcal P(T)\).
Setting \((a,b)=(t,0)\) and \((0,t)\) in \eqref{eq:3.3} gives
\begin{equation}
\begin{aligned}
K_T(t,0)&=\sum_{c,d,k\ge0}w_T(c,d,k)t^k\binom{t+c}{c},\\
K_T(0,t)&=\sum_{c,d,k\ge0}w_T(c,d,k)t^k\binom{t+d}{d}.
\end{aligned}
\label{eq:10.2}
\end{equation}
Every summand has nonnegative coefficients in \(t\).
By \eqref{eq:2.4}, \(i(M,t)\) equals \(K_T(t,0)\) at a
\(\text{\upshape P}\)-root and \(K_T(0,t)\) at a
\(\text{\upshape S}\)-root, so \(M\) is Ehrhart positive.
Multiplicativity under direct sums completes the proof, since
one-element components have Ehrhart polynomial \(1\).
\end{proof}

\subsection{Coefficient formulas and strict positivity}\label{sec:coefficients-examples}\label{ux666eux901aux7cfbux6570ux7684ux65e0ux7b26ux53f7ux6709ux7406ux52a0ux6743ux7ec4ux5408ux6a21ux578b}\label{sec:10}

We first express the Ehrhart coefficients as a sum of nonnegative
weights, then exhibit a positive summand in each degree.
For a nonnegative integer \(c\), let \(S_c\) be the symmetric group on \(c\)
elements, and let \(\operatorname{Cyc}(\sigma)\) be the set of
cycles of \(\sigma\in S_c\). The group \(S_0\) consists of the
empty permutation, with no cycles. The standard enumeration by
cycles \cite[Proposition~1.3.7]{StanleyEC1} gives
\begin{equation}
\binom{t+c}{c}=\frac1{c!}
\sum_{\sigma\in S_c}(t+1)^{|\operatorname{Cyc}(\sigma)|}.
\label{eq:10.3}
\end{equation}

\begin{corollary}\label{prop:marked-cycles}
Let \(T\) be an \text{\upshape S-P} tree with weights \(w_T(c,d,k)\) from
Theorem~\ref{thm:positive-expansion}, and let \(h\ge0\) be an integer.
If \(T\) is a leaf or its root is labeled \(\text{\upshape P}\), then
\begin{equation}
[t^h]i(\mathcal P(T),t)
=\sum_{\substack{c,d,k\ge0,\;\sigma\in S_c\\
A\subseteq\operatorname{Cyc}(\sigma),\;k+|A|=h}}
\frac{w_T(c,d,k)}{c!}.
\label{eq:10.4}
\end{equation}
For an \(\text{\upshape S}\)-labeled root, replace \(S_c\) and \(c!\)
in \eqref{eq:10.4} by \(S_d\) and \(d!\), respectively.
\end{corollary}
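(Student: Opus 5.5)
The plan is to combine the evaluation \eqref{eq:10.2} with the cycle enumeration \eqref{eq:10.3} and read off coefficients. First consider a leaf or a \(\text{\upshape P}\)-labeled root. By \eqref{eq:2.4}, \(i(\mathcal P(T),t)=K_T(t,0)\). Setting \((a,b)=(t,0)\) in \eqref{eq:3.3} makes \(\binom{b+d}{d}=\binom{d}{d}=1\), which gives the first line of \eqref{eq:10.2}:
\[
i(\mathcal P(T),t)=\sum_{c,d,k\ge0}w_T(c,d,k)\,t^k\binom{t+c}{c}.
\]
This is a finite sum, since Theorem~\ref{thm:positive-expansion} says the weights vanish for \(c+d+k>n_T-1\).

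Next we substitute \eqref{eq:10.3} for \(\binom{t+c}{c}\). For each \(\sigma\in S_c\), we expand \((t+1)^{|\operatorname{Cyc}(\sigma)|}\) by choosing a subset \(A\subseteq\operatorname{Cyc}(\sigma)\) of cycles that contribute the factor \(t\). This gives \(\sum_{A}t^{|A|}\), so
\[
i(\mathcal P(T),t)=\sum_{c,d,k\ge0}\frac{w_T(c,d,k)}{c!}\sum_{\sigma\in S_c}\sum_{A\subseteq\operatorname{Cyc}(\sigma)}t^{k+|A|}.
\]
Extracting the coefficient of \(t^h\) keeps exactly the triples with \(k+|A|=h\), which is \eqref{eq:10.4}. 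The case \(c=0\) is consistent with this: \(S_0\) has one element with no cycles, so only \(A=\varnothing\) occurs, matching \(\binom{t}{0}=1\).

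For an \(\text{\upshape S}\)-labeled root, \eqref{eq:2.4} gives \(i(\mathcal P(T),t)=K_T(0,t)\). The second line of \eqref{eq:10.2} now carries the factor \(\binom{t+d}{d}\), and the same argument with \(S_d\) and \(d!\) in place of \(S_c\) and \(c!\) gives the stated variant.

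There is no real obstacle here. The only points to check are that the rearrangement involves only finite sums, which holds by the support bound in Theorem~\ref{thm:positive-expansion}, and that \eqref{eq:3.3} is valid at \((t,0)\) and \((0,t)\) for every nonnegative integer \(t\). Because both sides are polynomials in \(t\) that agree at all nonnegative integers, the coefficient identity follows.
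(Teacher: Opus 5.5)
Your proposal is correct and follows essentially the same route as the paper's proof. Like the paper, you substitute the cycle enumeration \eqref{eq:10.3} into the expansion \eqref{eq:10.2} of \(K_T(t,0)\) (respectively \(K_T(0,t)\) for an \(\text{\upshape S}\)-labeled root), expand \((t+1)^{|\operatorname{Cyc}(\sigma)|}\) over subsets \(A\), and extract the coefficient of \(t^h\) using \eqref{eq:2.4}, with the empty permutation handling \(c=0\) and the leaf case.
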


\begin{proof}
In \((t+1)^{|\operatorname{Cyc}(\sigma)|}\), choosing \(t\)
from the factors indexed by \(A\) and \(1\) from the others
gives \(t^{|A|}\). Substitute \eqref{eq:10.3} into the expansion
of \(K_T(t,0)\) in \eqref{eq:10.2} and extract the coefficient of
\(t^h\), using \eqref{eq:2.4}.
For an \(\text{\upshape S}\)-labeled root, use the expansion of
\(K_T(0,t)\) in \eqref{eq:10.2}, with \(\binom{t+d}{d}\)
in place of \(\binom{t+c}{c}\).
For a leaf, the only nonzero weight is \(w_T(0,0,0)=1\),
and the empty permutation gives \eqref{eq:10.4}.
\end{proof}

\begin{corollary}\label{thm:strict-root}
For every \text{\upshape S-P} tree \(T\),
\begin{equation}
[t^h]i(\mathcal P(T),t)>0\qquad(0\le h\le n_T-1).
\label{eq:8.2}
\end{equation}
\end{corollary}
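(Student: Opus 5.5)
For a single leaf, \(n_T-1=0\) and \(i(\mathcal P(T),t)=1\), so the statement is immediate. For an internal root, the plan is to apply Corollary~\ref{prop:marked-cycles} and exhibit, in each degree \(h\), a single summand that is strictly positive. The term to use is the distinguished weight \(w_T(p,q,0)>0\) from \eqref{eq:8.1}, where \(p\) and \(q\) are the numbers of \(\text{\upshape P}\)- and \(\text{\upshape S}\)-vertices in a binary bracketing of \(T\). Every summand in \eqref{eq:10.4} is nonnegative, so one positive term in degree \(h\) is enough.

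First I verify that \(p+q=n_T-1\). A binary tree with \(n_T\) leaves has exactly \(n_T-1\) internal vertices, so this holds. Now suppose the root is labeled \(\text{\upshape P}\). Take \(c=p\), \(d=q\), \(k=0\) in \eqref{eq:10.4}, let \(\sigma\in S_p\) be the identity permutation, which has \(p\) cycles, and choose any \(A\subseteq\operatorname{Cyc}(\sigma)\) with \(|A|=h\). This is possible whenever \(0\le h\le p\), and the resulting summand is \(w_T(p,q,0)/p!>0\). Equivalently, \(w_T(p,q,0)\binom{t+p}{p}\) already has positive coefficients in degrees \(0,\ldots,p\).

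The remaining degrees \(p<h\le n_T-1\) are the main obstacle. The distinguished term alone reaches only degree \(p\), so positivity in higher degrees needs more than \eqref{eq:8.1}.

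\begin{itemize}
\item One could look for terms with \(k>0\). The difficulty is that the known positive term has \(k=0\).
\item A cleaner route is to bound the degree of \(i(\mathcal P(T),t)\) from below. The polytope \(\mathcal P(T)=\mathcal P(M)\) has dimension \(n_T-1\) by \eqref{eq:matroid-dimension}, since \(M\) is connected. Hence the leading coefficient is its normalized volume, which is positive. But this gives only the top coefficient.
\item So I would instead strengthen the inductive bookkeeping behind \eqref{eq:8.1}. Following the proof of Lemma~\ref{lem:positive-kernel}(i), the kernel coefficients \(g_{m,n,j}(u)\) are strictly positive for \(1\le j\le m+n\) when \(0<u<1\). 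Consequently \(A^{c,e}_{d,f;j}(t)\) has strictly positive coefficients in all degrees \(\ell\) with \(1\le \ell\le d+f-j\) whenever \(d+f-j\ge1\), and also in degree \(0\) in the boundary cases from Lemma~\ref{lem:positive-kernel}(ii).
\item From this I would prove by induction on the bracketing the claim that \(w_T(p,q-\ell,\ell)>0\) for every \(0\le\ell\le q\) when the root is \(\text{\upshape P}\), and symmetrically \(w_T(p-\ell,q,\ell)>0\) when the root is \(\text{\upshape S}\). More robustly, I would aim for the statement that for each \(0\le\ell\le q\) there are some \(c\ge p\) and \(d\) with \(w_T(c,d,\ell)>0\) and \(c+\ell\ge p+\ell\).
\end{itemize}

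The intended mechanism is as follows. At a \(\text{\upshape P}\)-vertex, choose \(j\) smaller than \(d+f\) in the parallel output rule \eqref{eq:parallel-output-rule}. This trades \(\text{\upshape S}\)-indices in \(d+f\) for powers of \(t\) via \([t^\ell]A^{c,e}_{d,f;j}>0\), while keeping the \(c\)-index equal to \(c+e+1\). At an \(\text{\upshape S}\)-vertex, inherit the terms from the children.

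Granting this, take the term \(w_T(c,d,k)\) with \(k=\ell=q\) and \(c\ge p\), and choose \(A\) with \(|A|=h-q\). Combined with the identity-permutation summand, this yields positive summands for every \(h\) from \(0\) up to \(p+q=n_T-1\). For an \(\text{\upshape S}\)-root, apply the same argument to \(T^*\), using \(\mathcal P(T^*)=\mathbf1-\mathcal P(T)\), which has the same Ehrhart polynomial; alternatively, repeat it with the roles of \(c\) and \(d\) exchanged.

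The step I expect to be hardest is checking that the strict positivity of the kernel coefficients survives the integration against \(\mu_{c,e}\) and the coefficient extraction \([x^dy^f](ux+vy)^jF\) at the needed indices. Here I would retain only the single monomial \(u^d v^{f-\ell}\)-type term together with \(F_{0,\ell}\), whose coefficients are positive in all degrees \(0,\ldots,\ell\) by the \(k=m\) term in \eqref{eq:kernel-boundary-coefficients}.
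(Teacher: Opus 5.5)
Your plan follows the paper's argument. The paper also:
\begin{enumerate}
\item applies \eqref{eq:8.1} to the two children \(U,V\) of a \(\text{\upshape P}\)-root, with \(c,d,e,f\) their \(\text{\upshape P}\)/\(\text{\upshape S}\) counts;
\item uses the parallel output rule \eqref{eq:parallel-output-rule} once, at the root, with \(j<d+f\), to trade \(\text{\upshape S}\)-index for powers of \(t\);
\item takes \(\sigma=\mathrm{id}\in S_p\) in \eqref{eq:10.4}, and handles an \(\text{\upshape S}\)-root by duality.
\end{enumerate}

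No induction over the tree is needed, because only \eqref{eq:8.1} for the children enters. You can therefore drop the \(\text{\upshape S}\)-vertex bookkeeping. At an \(\text{\upshape S}\)-vertex the terms would come from the mirror-image series computation, not from inheritance.

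Two steps as you wrote them do need repair.

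\textbf{The ``hardest step''.} Keeping the monomial \(u^dv^{f-\ell}\) together with \(F_{0,\ell}\) requires \(\ell\le f\). It fails at \(\ell=q=d+f\), the index you actually use, whenever \(d\ge1\). The paper instead takes \(j=0\). Then
\[
[t^k]A^{c,e}_{d,f;0}(t)=\int_0^1 g_{d,f,k}(u)\,\mathrm d\mu_{c,e}(u)>0\qquad(1\le k\le q)
\]
follows directly from Lemma~\ref{lem:positive-kernel}(i). This is the interior positivity you already quoted in your third bullet, and it gives \(w_T(p,0,k)>0\).

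\textbf{The final count.} Using only the weight with \(k=q\) gives degrees \(h\in[q,q+p]\). Together with \([0,p]\) from the identity summand, this leaves a gap when \(q\ge p+2\). For \(p<h\le p+q\), use instead the weight with \(k=h-p\) and \(A=\operatorname{Cyc}(\sigma)\). That weight is the paper's \(w_T(p,0,h-p)\), or equally your \(w_T(p,q-k,k)\), whose positivity does hold by Lemma~\ref{lem:positive-kernel}(i).
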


\begin{proof}
For a single leaf, \(i(\mathcal P(T),t)=1\).
Suppose that the root is labeled \(\text{\upshape P}\), and fix a
binary bracketing of \(T\) with child subtrees \(U,V\) at the root.
Use the weights constructed for this bracketing in the proof of
Theorem~\ref{thm:positive-expansion}.
Put \(c=n_U-1-s(U)\), \(d=s(U)\), \(e=n_V-1-s(V)\), and
\(f=s(V)\). Put \(p=c+e+1\) and \(q=d+f\), so
\(p+q=n_T-1\). Equation~\eqref{eq:8.1} gives
\(w_U(c,d,0)>0\), \(w_V(e,f,0)>0\), and \(w_T(p,q,0)>0\).

For \(1\le k\le q=d+f\), Lemma~\ref{lem:positive-kernel} gives
\(g_{d,f,k}(u)>0\) for \(0<u<1\).
Taking \(j=0\) in \eqref{eq:6.7} and using the positive density
of \(\mu_{c,e}\) on \((0,1)\) yields
\[
[t^k]A^{c,e}_{d,f;0}(t)
=\int_0^1g_{d,f,k}(u)\,\mathrm d\mu_{c,e}(u)>0.
\]
Applying \eqref{eq:parallel-output-rule} with \(j=0\) and \(\ell=k\) to
\(\Theta_{a,b}(c,d,0)\) and \(\Theta_{a,b}(e,f,0)\) therefore gives
\begin{equation}
w_T(p,0,k)\ge w_U(c,d,0)w_V(e,f,0)
[t^k]A^{c,e}_{d,f;0}(t)>0
\qquad(1\le k\le q).
\label{eq:strict-positive-weights}
\end{equation}

In \eqref{eq:10.4}, take \(\sigma\) to be the identity permutation
in \(S_p\), which has \(p\) singleton cycles.
For \(0\le h\le p\), use the weight \(w_T(p,q,0)\) and choose
any \(h\)-element subset \(A\) of \(\operatorname{Cyc}(\sigma)\).
The resulting summand is \(w_T(p,q,0)/p!>0\).
For \(p<h\le p+q\), use \(w_T(p,0,h-p)\) and
\(A=\operatorname{Cyc}(\sigma)\), obtaining a contribution
\(w_T(p,0,h-p)/p!>0\).
All other summands in \eqref{eq:10.4} are nonnegative, proving
\eqref{eq:8.2} for a \(\text{\upshape P}\)-labeled root.

If the root is labeled \(\text{\upshape S}\), then \(T^*\) has a
\(\text{\upshape P}\)-labeled root and the same number of leaves.
By \(\mathcal P(T^*)=\mathbf1-\mathcal P(T)\), proved after
Definition~\ref{def:dual-tree}, the map \(x\mapsto t\mathbf1-x\)
is a lattice-point bijection from \(t\mathcal P(T)\) to
\(t\mathcal P(T^*)\).
Thus \(i(\mathcal P(T),t)=i(\mathcal P(T^*),t)\), and the
\(\text{\upshape P}\)-root case proves \eqref{eq:8.2}.
\end{proof}

\section{Ehrhart positivity under series-parallel extensions}\label{sec:spex-mpm}

We prove the extension criterion in Lemma~\ref{lem:extension-criterion}
using weighted lattice-point counts. We then use weighted counts for
half-open lattice path matroid polytopes to establish Ehrhart positivity
for multi-path matroids and their series-parallel extensions
(Theorem~\ref{thm:spex-mpm}).

\subsection{An extension criterion for Ehrhart positivity}\label{subsec:tree-substitution}

For a matroid \(M\) on a finite set \(E\) and a family
\(\mathbf f=(f_e)_{e\in E}\) of polynomials in \(\mathbb Q[a,b]\), define
\begin{equation}
W_{M;\mathbf f}(t)=
\sum_{x\in t\mathcal P(M)\cap\mathbb Z^E}
\prod_{e\in E}f_e(x_e,t-x_e).
\label{eq:weighted-mpm-count}
\end{equation}
For \(E=[n]\), taking
\(f_i(a,b)=\binom{a+c_i}{c_i}\binom{b+d_i}{d_i}\)
for \(i=1,\ldots,n\) gives
\(W_{M;\mathbf f}(t)=W_{M;\mathbf c,\mathbf d}(t)\)
from \eqref{eq:atomic-weighted-count}.

To express the dependence of the weight on \(t\) through an
additional coordinate, set
\(\widehat{\mathcal P}=\mathcal P(M)\times\{1\}\) and
\(w(x,s)=\prod_{e\in E}f_e(x_e,s-x_e)\).
Since \(t\widehat{\mathcal P}=\{(x,t):x\in t\mathcal P(M)\}\),
\eqref{eq:weighted-mpm-count} becomes
\[
W_{M;\mathbf f}(t)=
\sum_{(x,s)\in t\widehat{\mathcal P}\cap(\mathbb Z^E\times\mathbb Z)}
w(x,s).
\]
The weighted Ehrhart theorem
\cite[Corollary~5.2(a)]{DeLoeraValenciaVillarrealWangWeighted}
applied to \(\widehat{\mathcal P}\) and \(w\) gives the polynomiality
of \(W_{M;\mathbf f}\).

Suppose that \(M'\) is obtained from a connected matroid \(M\)
on \(E\), with \(|E|\ge2\), by a parallel or series extension
at \(e\in E\). Write
\(E'=(E\mathbin{\backslash}\{e\})\cup\{e_1,e_2\}\) for the new
ground set, as in \eqref{eq:extension-bases}.
For a family \(\mathbf g=(g_j)_{j\in E'}\) of polynomials in \(\mathbb Q[a,b]\),
set \(f_j=g_j\) for \(j\in E\mathbin{\backslash}\{e\}\) and
\begin{equation}
f_e=
\begin{cases}
\text{\upshape P}(g_{e_1},g_{e_2}),&\text{for a parallel extension},\\
\text{\upshape S}(g_{e_1},g_{e_2}),&\text{for a series extension}.
\end{cases}
\label{eq:single-extension-weights}
\end{equation}

\begin{lemma}\label{lem:single-extension-count}
For every \(t\in\mathbb Z_{\ge0}\),
\begin{equation}
W_{M';\mathbf g}(t)=W_{M;\mathbf f}(t).
\label{eq:single-extension-count}
\end{equation}
\end{lemma}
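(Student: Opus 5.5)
The plan is to prove \eqref{eq:single-extension-count} by fibering the lattice points of \(t\mathcal P(M')\) over the lattice points of \(t\mathcal P(M)\). The fiber map sends \(x'\) to the point \(x\) that keeps all coordinates outside \(\{e_1,e_2\}\) and has a merged \(e\)-coordinate. By analogy with the load \(\lambda\) of Section~\ref{sec:tree-counts}, the merged coordinate is
\[
x_e=x'_{e_1}+x'_{e_2}
\]
in the parallel case and
\[
x_e=x'_{e_1}+x'_{e_2}-t
\]
in the series case. The identity then follows once we know two things: this map lands in \(t\mathcal P(M)\), and its fiber over each \(x\) is exactly the set of pairs \((x'_{e_1},x'_{e_2})\) indexed by \(\zeta\) in the convolution sums defining \(\text{\upshape P}\) and \(\text{\upshape S}\).

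The first step is a polytope-level statement. For the parallel case, I would show that
\[
t\mathcal P(M')=\bigl\{x':\ 0\le x'_{e_1},x'_{e_2}\le t,\ (x'|_{E\setminus e},\,x'_{e_1}+x'_{e_2})\in t\mathcal P(M)\bigr\}.
\]
For the series case, the analogous description uses \(x'_{e_1}+x'_{e_2}-t\) as the merged coordinate. To prove this I would mirror Lemma~\ref{lem:convexification}. One inclusion is immediate from the basis table \eqref{eq:extension-bases}: every basis of \(M'\) maps to a basis of \(M\), and the coordinates \(e_1,e_2\) lie in \([0,1]\); convexity then gives the inclusion. For the reverse inclusion, I would realize the right-hand side as a fiber product: it is the preimage of \(\mathcal P(M)\times\mathcal Q_{\text{\upshape P}(e_1,e_2)}(1)\) under a linear map gluing the \(e\)-coordinate to the load of the two-leaf tree. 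I would then show that the combined constraint system is integral with \(0/1\) vertices.

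This reverse inclusion is the main obstacle. The cleanest route I see is via \eqref{eq:matroid-base-inequalities}. The rank function of \(M'\) is explicit: in the parallel case \(r_{M'}(A)=r_M(\pi(A))\), where \(\pi\) identifies \(e_1,e_2\) with \(e\); the series case follows by duality. With this, the inequalities defining \(\mathcal P(M')\) reduce to inequalities of \(\mathcal P(M)\) on the merged coordinate together with the bounds \(0\le x'_{e_i}\le 1\). I would verify this reduction by checking subsets \(A\) according to \(|A\cap\{e_1,e_2\}|\). The nontrivial case is \(|A\cap\{e_1,e_2\}|=1\), where we need \(x'_{e_1}+\sum_{A\setminus e_1}\le r_M(\pi(A))\); this follows from the inequality for \(\pi(A)\) together with \(x'_{e_2}\ge0\). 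The series case follows by applying the parallel case to \(M^*\) and using \(\mathcal P(M^*)=\mathbf1-\mathcal P(M)\). Under the substitution \(x\mapsto t\mathbf1-x\), the sum \(x'_{e_1}+x'_{e_2}\) becomes \(2t-(x'_{e_1}+x'_{e_2})\), which converts the parallel merge into the series merge. Connectedness of \(M\) is used only to guarantee that \(e\) is neither a loop nor a coloop, so that \eqref{eq:extension-bases} applies.

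Given the polytope description, the counting is exactly as in Lemma~\ref{lem:load-recursion}. Fix a lattice point \(x\in t\mathcal P(M)\) and put \(a=x_e\), \(b=t-a\).

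In the parallel case, the fiber consists of the pairs \((\zeta,a-\zeta)\) with \(0\le\zeta\le a\). The corresponding weight factor is
\[
g_{e_1}(\zeta,t-\zeta)\,g_{e_2}(a-\zeta,b+\zeta),
\]
and summing over \(\zeta\) gives \(\text{\upshape P}(g_{e_1},g_{e_2})(a,b)=f_e(x_e,t-x_e)\).

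In the series case, the fiber consists of the pairs \((t-\zeta,a+\zeta)\) with \(0\le\zeta\le b\). Summing the weights over \(\zeta\) gives \(\text{\upshape S}(g_{e_1},g_{e_2})(a,b)\).

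The factors \(g_j\) for \(j\in E\setminus\{e\}\) are untouched by the fibering. Summing over \(x\) therefore yields \(W_{M';\mathbf g}(t)=W_{M;\mathbf f}(t)\).
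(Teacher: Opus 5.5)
Your proposal is correct. Its overall architecture matches the paper's:
\begin{itemize}
\item the same merged coordinate, \(x_e=y_{e_1}+y_{e_2}\) in the parallel case and \(x_e=y_{e_1}+y_{e_2}-t\) in the series case;
\item the forward inclusion obtained from the basis table \eqref{eq:extension-bases};
\item the series case reduced by duality to a parallel extension of \(M^*\);
\item the same fiber count over integer \(\zeta\), producing \(\text{\upshape P}(g_{e_1},g_{e_2})\) and \(\text{\upshape S}(g_{e_1},g_{e_2})\).
\end{itemize}

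The one genuine difference is the reverse inclusion: that every \(x\in t\mathcal P(M)\) and every admissible \(\zeta\) give a point of \(t\mathcal P(M')\).

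The paper proves this constructively from the vertex description. It writes \(x=\sum_B\alpha_B\mathbf1_B\) and splits each term with \(B\ni e\) into the bases \((B\setminus\{e\})\cup\{e_1\}\) and \((B\setminus\{e\})\cup\{e_2\}\), with weights \(\alpha_B\zeta/a\) and \(\alpha_B(a-\zeta)/a\). This works because \(\sum_{B\ni e}\alpha_B=a\).

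You work instead with the inequality description \eqref{eq:matroid-base-inequalities} and the parallel rank formula \(r_{M'}(A)=r_M(\pi(A))\). Your case analysis on \(|A\cap\{e_1,e_2\}|\) is correct; the only nontrivial case, one of \(e_1,e_2\) in \(A\), uses the nonnegativity of the other coordinate, as you say.

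The two routes trade off as follows. The paper's argument needs nothing beyond the basis table and is fully self-contained. Yours leans on the Feichtner--Sturmfels inequality description and on the rank formula, which you state without proof. That formula does follow quickly from \eqref{eq:extension-bases}. The map \(\pi\) is injective on each basis of \(M'\) and sends it to a basis of \(M\). Conversely, a basis of \(M\) containing \(e\) lifts to a basis of \(M'\) using whichever of \(e_1,e_2\) lies in \(A\).

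In exchange, your route gives an explicit inequality description of \(t\mathcal P(M')\) as a fiber product over \(t\mathcal P(M)\). Once the rank argument is in place, your alternative sketch via total unimodularity of the glued system is unnecessary.
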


\begin{proof}
We first show that the points of \(t\mathcal P(M')\) are
parametrized uniquely by \(x\in t\mathcal P(M)\) and a real
number \(\zeta\) as follows. Put \(a=x_e\) and \(b=t-a\),
keep \(y_j=x_j\) for \(j\in E\mathbin{\backslash}\{e\}\), and set
\begin{equation}
(y_{e_1},y_{e_2})=
\begin{cases}
(\zeta,a-\zeta),\quad 0\le\zeta\le a,&\text{for a parallel extension},\\
(t-\zeta,a+\zeta),\quad 0\le\zeta\le b,&\text{for a series extension}.
\end{cases}
\label{eq:single-extension-coordinates}
\end{equation}
At \(t=1\), the parallel basis description in
\eqref{eq:extension-bases} shows that the map fixing the
\(E\mathbin{\backslash}\{e\}\)-coordinates and setting
\(x_e=y_{e_1}+y_{e_2}\) sends \(\mathcal P(M')\) into
\(\mathcal P(M)\). Since \(y_{e_1},y_{e_2}\ge0\), taking
\(\zeta=y_{e_1}\) gives \(0\le\zeta\le x_e=a\).
Conversely, write \(x=\sum_{B\in\mathcal B(M)}\alpha_B\mathbf1_B\),
where \(\alpha_B\ge0\) and \(\sum_B\alpha_B=1\).
If \(a=x_e>0\), replace each term indexed by \(B\ni e\) with
the indicators of \((B\mathbin{\backslash}\{e\})\cup\{e_1\}\)
and \((B\mathbin{\backslash}\{e\})\cup\{e_2\}\), with coefficients
\(\alpha_B\zeta/a\) and \(\alpha_B(a-\zeta)/a\), respectively.
Retain each term indexed by \(B\not\ni e\).
Since \(\sum_{B\ni e}\alpha_B=a\), the resulting convex combination
has new coordinates \((\zeta,a-\zeta)\) and all other coordinates
equal to those of \(x\), proving \(y\in\mathcal P(M')\).
If \(a=0\), only bases avoiding \(e\) have positive coefficients,
and \(y_{e_1}=y_{e_2}=0\).
Scaling gives the parallel case of
\eqref{eq:single-extension-coordinates} for \(t>0\); at \(t=0\),
both dilates consist of the origin.

For a series extension, \(M'^*\) is a parallel extension of
\(M^*\). Apply the parallel case to \(t\mathbf1-y\) and
\(t\mathbf1-x\). The coordinate relation becomes
\(t-x_e=(t-y_{e_1})+(t-y_{e_2})\), or
\(x_e=y_{e_1}+y_{e_2}-t\).
Taking \(\zeta=t-y_{e_1}\) gives the series pair and the interval
\(0\le\zeta\le t-x_e=b\) in \eqref{eq:single-extension-coordinates}.

To recover \(x\) and \(\zeta\) from \(y\), take
\(x_j=y_j\) for \(j\in E\mathbin{\backslash}\{e\}\), and set
\((x_e,\zeta)=(y_{e_1}+y_{e_2},y_{e_1})\) in the parallel case
or \((x_e,\zeta)=(y_{e_1}+y_{e_2}-t,t-y_{e_1})\) in the series case.
Thus \(x\) and \(\zeta\) are uniquely determined.
For integral \(t\), the vector \(y\) is integral
if and only if \(x\) and \(\zeta\) are integral.
Summing \(g_{e_1}(y_{e_1},t-y_{e_1})g_{e_2}(y_{e_2},t-y_{e_2})\)
over the integer values of \(\zeta\) in
\eqref{eq:single-extension-coordinates} gives \(f_e(a,b)\), by
\eqref{eq:single-extension-weights} and the definitions of
\(\text{\upshape P},\text{\upshape S}\). Therefore
\[
W_{M';\mathbf g}(t)=
\sum_{x\in t\mathcal P(M)\cap\mathbb Z^E}
f_e(x_e,t-x_e)\prod_{j\in E\mathbin{\backslash}\{e\}}g_j(x_j,t-x_j)
=W_{M;\mathbf f}(t).\qedhere
\]
\end{proof}

\begin{proof}[Proof of Lemma~\ref{lem:extension-criterion}]
Let \(M'\) be obtained from \(M\) by a finite sequence of series
and parallel extensions. Assign weight \(1\) to every element
of \(M'\), so \(i(M',t)=W_{M';\mathbf1}(t)\).
Series and parallel extensions preserve connectedness.
Apply \eqref{eq:single-extension-count} from the last extension
to the first, merging each pair of weights by
\eqref{eq:single-extension-weights}.
Since \(1\in\mathcal C_{a,b}\) and Lemma~\ref{lem:convolution-closure}
gives closure under \(\text{\upshape P}\) and \(\text{\upshape S}\),
we obtain
\[
i(M',t)=W_{M;\mathbf f}(t),\qquad
f_i\in\mathcal C_{a,b}\quad(1\le i\le n).
\]
\Needspace{8\baselineskip}
For \(i=1,\ldots,n\), choose a finite expansion
\(f_i=\sum_{c,d,k\ge0}w_i(c,d,k)\Theta_{a,b}(c,d,k)\),
where \(w_i(c,d,k)\in\mathbb Q_{\ge0}\).
Substituting \(a=x_i\), \(b=t-x_i\) and expanding the product in
\eqref{eq:weighted-mpm-count} gives
\begin{equation}
W_{M;\mathbf f}(t)=
\sum_{\mathbf c,\mathbf d,\mathbf k\in\mathbb Z_{\ge0}^n}
\left(\prod_{i=1}^n w_i(c_i,d_i,k_i)\right)
t^{\sum_{i=1}^n k_i}W_{M;\mathbf c,\mathbf d}(t).
\label{eq:weighted-count-expansion}
\end{equation}
Every \(W_{M;\mathbf c,\mathbf d}\) has nonnegative coefficients
by hypothesis, and every \(w_i(c_i,d_i,k_i)\) is nonnegative.
Hence \(i(M',t)\in\mathbb Q_{\ge0}[t]\).
\end{proof}

\subsection{Weighted counts for lattice path matroids}\label{subsec:weighted-path-counts}

We use the description in Lemma~\ref{lem:multipath-polytope}
to count lattice points in dilates of lattice path matroid polytopes with
selected inequalities \(S_j(x)\ge L_jt\) made strict.
To impose strict lower inequalities, the recursion in
Lemma~\ref{lem:weighted-open-path} uses the
\(\text{\upshape S}\)-convolution with range \(0\le\zeta<b\).
The class \(\mathcal C_{a,b}\) is not closed under the
\(\text{\upshape S}\)-convolution restricted to \(0\le\zeta<b\):
taking both inputs to be \(1\) gives \(\sum_{\zeta=0}^{b-1}1=b\).
At \(b=0\), each generator
\((a+b)^k\binom{a+c}{c}\binom{b+d}{d}\) equals
\(a^k\binom{a+c}{c}>0\) for \(a>0\).
Every nonzero nonnegative rational linear combination of the generators
is therefore positive at \((a,0)\) for \(a>0\).
Since the polynomial \(b\) vanishes at \((a,0)\), we have
\(b\notin\mathcal C_{a,b}\).
We therefore allow the factors
\(\binom{b+d-1}{d}=\frac{b(b+1)\cdots(b+d-1)}{d!}\) for \(d\ge1\),
which have nonnegative coefficients and vanish at \(b=0\).

\begin{definition}\label{def:open-expansion-class}
Let \(\mathcal C^\circ_{a,b}\subseteq\mathbb Q[a,b]\) be the class
\[
\mathcal C^\circ_{a,b}=
\left\{
\sum_{i=1}^{N}\alpha_i(a+b)^{k_i}
\binom{a+c_i}{c_i}\binom{b+d_i-1}{d_i}
\;\middle|\;
\substack{N\in\mathbb Z_{\ge1},\ \alpha_i\in\mathbb Q_{\ge0}\\
c_i,k_i\in\mathbb Z_{\ge0},\ d_i\in\mathbb Z_{\ge1}\ (1\le i\le N)}
\right\}.
\]
Let \(\mathcal D_{a,b}=\mathcal C_{a,b}+\mathcal C^\circ_{a,b}\)
denote the set of sums of a polynomial in \(\mathcal C_{a,b}\)
and one in \(\mathcal C^\circ_{a,b}\).
Define
\(\text{\upshape S}^>:\mathbb Q[a,b]\times\mathbb Q[a,b]\to\mathbb Q[a,b]\)
by
\begin{equation}
\text{\upshape S}^>(G,H)(a,b)=\sum_{\zeta=0}^{b-1}
G(t-\zeta,\zeta)H(a+\zeta,b-\zeta),\qquad t=a+b,
\label{eq:strict-s-convolution}
\end{equation}
for \(G,H\in\mathbb Q[a,b]\) and nonnegative integers \(a,b\),
with an empty sum at \(b=0\).
Power sums give a unique polynomial extension, as for
\(\text{\upshape P},\text{\upshape S}\).
\end{definition}

Every member of \(\mathcal D_{a,b}\) has nonnegative coefficients
in \(a,b\), and every member of \(\mathcal C^\circ_{a,b}\) vanishes at \(b=0\).

\begin{lemma}\label{lem:one-sided-closure}
Let \(c,d\in\mathbb Z_{\ge0}\) and put
\(G(a,b)=\binom{a+c}{c}\binom{b+d}{d}\).
For every \(H\in\mathcal D_{a,b}\),
\begin{equation}
\text{\upshape P}(G,H),\quad\text{\upshape S}(G,H),\quad\text{\upshape S}^>(G,H)
\ \in\mathcal D_{a,b}.
\label{eq:one-sided-closure}
\end{equation}
\end{lemma}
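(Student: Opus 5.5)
The plan is to argue by bilinearity. All three operators are bilinear, and \(\mathcal D_{a,b}\) is closed under nonnegative combinations and under multiplication by \((a+b)^k\), since \(t\) factors out of each convolution sum. So it suffices to treat \(H\) equal to a single generator \(\Theta_{a,b}(e,f,k)\) of \(\mathcal C_{a,b}\), or a single generator \(\Theta^\circ_{a,b}(e,f,k)=(a+b)^k\binom{a+e}{e}\binom{b+f-1}{f}\) of \(\mathcal C^\circ_{a,b}\) with \(f\ge1\). For \(H\in\mathcal C_{a,b}\), the cases \(\text{\upshape P}(G,H)\) and \(\text{\upshape S}(G,H)\) already lie in \(\mathcal C_{a,b}\subseteq\mathcal D_{a,b}\) by Lemma~\ref{lem:convolution-closure}. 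The remaining cases are \(\text{\upshape S}^>\) with a \(\mathcal C\)-generator, and all three operators with a \(\mathcal C^\circ\)-generator.

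First I would dispose of \(\text{\upshape S}^>\) by reducing it to \(\text{\upshape S}\) plus one elementary sum.

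For \(H\in\mathcal C^\circ_{a,b}\), the factor \(\binom{(b-\zeta)+f-1}{f}\) vanishes at \(\zeta=b\). Hence \(\text{\upshape S}^>(G,H)=\text{\upshape S}(G,H)\), and this case reduces to the \(\text{\upshape S}\)-case for \(\mathcal C^\circ\)-generators.

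For \(H=\Theta_{a,b}(e,f,k)\), I would use the hockey-stick identity
\[
\binom{n+f}{f}=\sum_{g=0}^{f}\binom{n+g-1}{g},\qquad n=b-\zeta\ge1.
\]
This splits \(\text{\upshape S}^>(G,H)\) into two parts. The terms with \(g\ge1\) give \(\sum_{g=1}^f\text{\upshape S}(G,\Theta^\circ_{a,b}(e,g,k))\), again by the vanishing at \(\zeta=b\). The term \(g=0\) leaves
\[
t^k\sum_{\zeta=0}^{b-1}\binom{t-\zeta+c}{c}\binom{\zeta+d}{d}\binom{a+\zeta+e}{e}.
\]
I would treat this last sum by a beta-integral computation modelled on the proof of Proposition~\ref{thm:four-chain}, after applying \(\iota\).

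The core step is a \(\mathcal C^\circ\) analogue of Proposition~\ref{thm:four-chain}. Pass to the \(\text{\upshape P}\)-side via \eqref{eq:7.2}; the vanishing factor then sits in the \(a\)-variable, or in the \(b\)-variable for \(\text{\upshape P}\) itself. Then use \(\binom{b+f-1}{f}=[y^f](1-y)^{-b}\). In \eqref{eq:beta-convolution-integral} the denominator \((1-y)^{t+1}\) becomes \((1-y)^{t}\). The factorization \eqref{eq:6.5} then reads
\[
F(t;x,y;u)\,(1-y)\,(1-ux-vy)^{-b-1}.
\]
Coefficient extraction with \((1-ux-vy)^{-b-1}\) still yields \(\binom{b+j}{j}\) times polynomials in \(t\). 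The extra factor \((1-y)\) is the source of signs, so I would absorb it differently: write \((1-y)^{-t}=(1-y)^{-(t+1)}(1-y)\) only after shifting \(b\mapsto b-1\) in the \(y\)-exponent. The target is to produce \(\binom{b+j-1}{j}\) outputs, that is \(\mathcal C^\circ\)-terms. Concretely, I would show that the output is a nonnegative combination of \(\Theta\) and \(\Theta^\circ\) terms by redoing Lemma~\ref{lem:positive-kernel} for the kernel \(R^{t}\) in place of \(R^{t+1}\). Its boundary exponential \eqref{eq:kernel-boundary-exponential} has exponent \(t\) instead of \(t+1\), which is still nonnegative, and the analogue of recurrence \eqref{eq:kernel-coefficient-recurrence} carries \((t-1)t\) in place of \(t(t+1)\). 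I expect this sign issue to be the main obstacle. If the \((t-1)t\) factor breaks nonnegativity, I would instead expand in the shifted basis \(\binom{b+j-1}{j}\), that is use \((1-ux-vy)^{-b}\) in the factorization. I would check that this makes the residual kernel exactly \(R^{t+1}\) times \((1-ux-vy)\), then verify coefficient nonnegativity in \(t\) directly through the recurrence.

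Finally, each operator applied to \(G\) and a generator would be collected into \(\mathcal C\) and \(\mathcal C^\circ\) parts. The leftover \(g=0\) sum from the \(\text{\upshape S}^>\) reduction is handled by the same integral with the \(\zeta=b\) endpoint excluded, which produces \((1-y)^{-b}\) on the \(b\)-side and hence \(\mathcal C^\circ\)-terms. Together these give \eqref{eq:one-sided-closure}.
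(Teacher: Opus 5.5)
Your outer structure matches the paper's. That includes bilinearity, $\text{\upshape S}^>(G,H)=\text{\upshape S}(G,H)$ for $H\in\mathcal C^\circ_{a,b}$, the hockey-stick split of $\binom{b-\zeta+f}{f}$ for $\text{\upshape S}^>$ on a $\mathcal C_{a,b}$-generator, beta integrals, and expanding $(1-ux-vy)^{-b}$ to produce the factors $\binom{b+j-1}{j}$.

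The gap is the core step: you never identify a kernel whose coefficient nonnegativity you can prove, and every candidate you name fails.
\begin{itemize}
\item $[xy]R^t=t(t-1)uv$, which has a negative coefficient of $t$ for $0<u<1$.
\item $R^{t+1}(1-y)$ has $[y]$-coefficient $(t+1)u-1$.
\item $R^{t+1}(1-ux-vy)$ has $[x]$-coefficient $(t+1)v-u$, whose constant term is negative for $u>1/2$.
\end{itemize}
Your guess for the residual kernel is also wrong. Write $D=1-ux-vy$. For $\text{\upshape P}(G,H^\circ)$ with $H^\circ=\binom{a+e}{e}\binom{b+f-1}{f}$, the integrand is $D^a/\bigl((1-x)^{t+1}(1-y)^t\bigr)$. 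Factoring out $D^{-b}$ leaves
\[
J_x=\frac{R^t}{1-x}=\frac{R^{t+1}(1-y)}{D}.
\]
The same $J_x$, now multiplied by $D^{-a-1}$, governs $\text{\upshape S}(G,H^\circ)$ and every hockey-stick term of $\text{\upshape S}^>(G,H)$. Since $(1-y)/D$ has the negative coefficient $-u$ at $y$, positivity of $J_x$ does not follow from Lemma~\ref{lem:positive-kernel}. Differentiating $J_x$ alone also does not give a closed recurrence.

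The missing idea is a coupled recurrence. With $J_y=R^t/(1-y)$,
\[
\partial_x\partial_yJ_x=\frac{t^2uvJ_x+tu^2J_y}{D^2},\qquad J_y(t;x,y;u)=J_x(t;y,x;1-u).
\]
The boundary series $J_x(t;x,0;u)=(1-x)^{-1}\exp\bigl(t\sum_{r\ge1}(1-u^r)x^r/r\bigr)$ and $J_x(t;0,y;u)=\exp\bigl(t\sum_{r\ge1}(1-v^r)y^r/r\bigr)$ have nonnegative coefficients. Run induction on $m+n$ simultaneously for all $u\in[0,1]$, so the $J_y$ term is handled through $u\mapsto 1-u$. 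This shows $[x^my^n]J_x$ has nonnegative coefficients in $t$. The multipliers $t^2$ and $t$, unlike $t(t-1)$, cause no sign problems.

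Two smaller points.
\begin{itemize}
\item In $\text{\upshape S}(G,H^\circ)$ the shifted factor $\binom{b-\zeta+f-1}{f}$ lies in the beta pair, not in the coefficient-extraction pair. So your device $\binom{b+f-1}{f}=[y^f](1-y)^{-b}$ does not apply there. Instead, drop the vanishing $\zeta=b$ summand and apply the beta identity with $q=b-1-\zeta$. The $\mathcal C^\circ_{a,b}$ factor $\binom{b+d+f}{d+f+1}$ then comes from the normalization.
\item Your $g=0$ term works the same way with $\mu_{d,0}$, giving $\binom{b+d}{d+1}$. It does not come from a $(1-y)^{-b}$ expansion.
\end{itemize}
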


\begin{proof}
Recall from \eqref{eq:4.1} that
\[
R(x,y)=\frac{1-ux-vy}{(1-x)(1-y)},\qquad v=1-u.
\]
Put \(D=1-ux-vy\), and define
\[
J_x(t;x,y;u)=\frac{R(x,y)^t}{1-x},\qquad
J_y(t;x,y;u)=\frac{R(x,y)^t}{1-y}.
\]
By \eqref{eq:formal-binomial-power},
\([x^my^n]J_x\) and \([x^my^n]J_y\) are polynomials in \(t\)
for all \(m,n\ge0\). In view of
\(J_y(t;x,y;u)=J_x(t;y,x;1-u)\), we prove that
\([x^my^n]J_x\) has nonnegative coefficients in \(t\) by induction
on \(m+n\), simultaneously for all \(u\in[0,1]\).
The cases \(n=0\) and \(m=0\) follow from
\[
J_x(t;x,0;u)=\frac1{1-x}
\exp\!\left(t\sum_{r\ge1}\frac{1-u^r}{r}x^r\right),\qquad
J_x(t;0,y;u)=
\exp\!\left(t\sum_{r\ge1}\frac{1-v^r}{r}y^r\right).
\]
The coefficients \((1-u^r)/r\) and \((1-v^r)/r\) are
nonnegative, so expanding the exponentials gives nonnegative
coefficients in the series variable and \(t\).
Differentiation gives
\begin{equation}
\partial_x\partial_yJ_x=\frac{t^2uvJ_x+tu^2J_y}{D^2}.
\label{eq:endpoint-kernel-derivatives}
\end{equation}
For \(m,n\ge1\), extract \(x^{m-1}y^{n-1}\) from
\eqref{eq:endpoint-kernel-derivatives}. The left side is
\(mn[x^my^n]J_x\). Since
\(D^{-2}=\sum_{r\ge0}(r+1)(ux+vy)^r\), the right side uses
only coefficients \([x^iy^j]J_x\) and \([x^iy^j]J_y\) with
\(i+j\le m+n-2\). The coefficients of \([x^iy^j]J_x\) in \(t\)
are nonnegative by induction. For \(J_y\), we have
\[
[x^iy^j]J_y(t;x,y;u)=[x^jy^i]J_x(t;x,y;1-u).
\]
Since \(i+j<m+n\) and \(1-u\in[0,1]\), the induction hypothesis
also gives nonnegative coefficients in \(t\) for \([x^iy^j]J_y\).
The coefficients of \(D^{-2}\) in \(x,y\) are nonnegative,
so division by \(mn>0\) completes the induction.

For nonnegative integers \(e,f\), put
\(H(a,b)=\binom{a+e}{e}\binom{b+f}{f}\).
For \(f\ge1\), set \(H^\circ(a,b)={H(a,b-1)}\).
Take \(a,b\in\mathbb Z_{\ge0}\) and put \(t=a+b\).
For \(\zeta=0,\ldots,a\), applying \eqref{eq:5.2} with
\(p=\zeta\) and \(q=a-\zeta\) gives
\[
\binom{\zeta+c}{c}\binom{a-\zeta+e}{e}
=\binom{a+c+e+1}{c+e+1}\binom{a}{\zeta}
\int_0^1 u^{\zeta}v^{a-\zeta}\,\mathrm d\mu_{c,e}.
\]
The binomial series for \((1-x)^{-(t-\zeta+1)}\) and
\((1-y)^{-(b+\zeta)}\) gives
\[
\binom{t-\zeta+d}{d}\binom{b+\zeta+f-1}{f}
=[x^dy^f](1-x)^{\zeta-t-1}(1-y)^{-b-\zeta}.
\]
By the definition of \(\text{\upshape P}\),
\(\text{\upshape P}(G,H^\circ)(a,b)
=\sum_{\zeta=0}^{a}
\binom{\zeta+c}{c}\binom{a-\zeta+e}{e}
\binom{t-\zeta+d}{d}\binom{b+\zeta+f-1}{f}\).
Replace the first two factors by their beta-integral representation
and the last two by their coefficient representation.
Since \(-b-\zeta=a-\zeta-t\), we obtain
\[
\text{\upshape P}(G,H^\circ)(a,b)
=\binom{a+c+e+1}{c+e+1}[x^dy^f]
\int_0^1
\frac{\displaystyle\sum_{\zeta=0}^{a}\binom{a}{\zeta}
[u(1-x)]^\zeta[v(1-y)]^{a-\zeta}}
{(1-x)^{t+1}(1-y)^t}\,\mathrm d\mu_{c,e}.
\]
The binomial theorem evaluates the sum in the integral as
\(\bigl(u(1-x)+v(1-y)\bigr)^a=D^a\).
Since \(R=D/((1-x)(1-y))\) and \(t=a+b\), we have
\(D^a/((1-x)^{t+1}(1-y)^t)=J_xD^{-b}\), and hence
\begin{equation}
\text{\upshape P}(G,H^\circ)(a,b)
=\binom{a+c+e+1}{c+e+1}[x^dy^f]
\int_0^1 J_xD^{-b}\,\mathrm d\mu_{c,e}.
\label{eq:open-p-integral}
\end{equation}

For \(\text{\upshape S}(G,H^\circ)(a,b)\), assume \(b\ge1\).
Since \(f\ge1\), the \(\zeta=b\) summand is zero.
Apply \eqref{eq:5.2} to \(\mu_{d,f}\) with \(p=\zeta\) and
\(q=b-1-\zeta\). Summing over \(0\le\zeta\le b-1\) by the
binomial theorem gives the integrand
\(D^{b-1}/((1-x)^{t+1}(1-y)^t)=J_xD^{-a-1}\), and hence
\begin{equation}
\text{\upshape S}(G,H^\circ)(a,b)
=\binom{b+d+f}{d+f+1}[x^cy^e]
\int_0^1 J_xD^{-a-1}\,\mathrm d\mu_{d,f}.
\label{eq:open-s-integral}
\end{equation}

For \(\text{\upshape S}^>(G,H)(a,b)\), take \(f\ge0\) and \(b\ge1\).
For \(0\le\zeta\le b-1\), use the identity
\(\binom{b-\zeta+f}{f}
=\sum_{\ell=0}^{f}\binom{b-1-\zeta+\ell}{\ell}\).
For each \(0\le\ell\le f\), apply \eqref{eq:5.2} to
\(\mu_{d,\ell}\) with \(p=\zeta\) and \(q=b-1-\zeta\), and
sum over \(0\le\zeta\le b-1\). The resulting integrand is again
\(J_xD^{-a-1}\), giving
\begin{equation}
\text{\upshape S}^>(G,H)(a,b)
=\sum_{\ell=0}^{f}\binom{b+d+\ell}{d+\ell+1}[x^cy^e]
\int_0^1 J_xD^{-a-1}\,\mathrm d\mu_{d,\ell}.
\label{eq:first-open-integral}
\end{equation}
At \(b=0\), both sides of \eqref{eq:open-s-integral} vanish,
since \(H^\circ(a,0)=0\) and \(\binom{d+f}{d+f+1}=0\).
Both sides of \eqref{eq:first-open-integral} also vanish:
the sum defining \(\text{\upshape S}^>(G,H)(a,0)\) in
\eqref{eq:strict-s-convolution} is empty, and
\(\binom{d+\ell}{d+\ell+1}=0\).

To expand the convolutions in
\eqref{eq:open-p-integral}--\eqref{eq:first-open-integral}
in binomial products, use
\[
D^{-b}=1+\sum_{j\ge1}\binom{b+j-1}{j}(ux+vy)^j,
\qquad
D^{-a-1}=\sum_{j\ge0}\binom{a+j}{j}(ux+vy)^j.
\]
Extracting \(x^dy^f\) in \eqref{eq:open-p-integral} restricts
the sum to \(j\le d+f\); extracting \(x^cy^e\) in
\eqref{eq:open-s-integral} and \eqref{eq:first-open-integral}
restricts it to \(j\le c+e\).
The coefficients of \([x^my^n](ux+vy)^jJ_x\) in \(t\)
are nonnegative for \({u\in[0,1]}\). Since
\(\mu_{c,e},\mu_{d,f},\mu_{d,\ell}\) have nonnegative densities
and rational moments \eqref{eq:5.2}, integrating
\([x^my^n](ux+vy)^jJ_x\) gives a polynomial in \(\mathbb Q_{\ge0}[t]\).
For \(1\le j\le d+f\), the corresponding term in
\eqref{eq:open-p-integral} is
\[
\binom{a+c+e+1}{c+e+1}\binom{b+j-1}{j}
\int_0^1[x^dy^f](ux+vy)^jJ_x\,\mathrm d\mu_{c,e}.
\]
Expanding the integral in powers of \(t=a+b\) gives a finite
nonnegative rational linear combination of
\((a+b)^k\binom{a+c+e+1}{c+e+1}\binom{b+j-1}{j}\),
the defining generators of \(\mathcal C^\circ_{a,b}\).
The \(j=0\) term belongs to \(\mathcal C_{a,b}\).
In \eqref{eq:open-s-integral} and \eqref{eq:first-open-integral},
the binomial products are, respectively,
\(\binom{a+j}{j}\binom{b+d+f}{d+f+1}\) and
\(\binom{a+j}{j}\binom{b+d+\ell}{d+\ell+1}\).
Expanding the integrals in powers of \(t=a+b\) therefore places
each term of both series convolutions in \(\mathcal C^\circ_{a,b}\).

Lemma~\ref{lem:convolution-closure} gives
\(\text{\upshape P}(\mathcal C_{a,b},\mathcal C_{a,b})\subseteq\mathcal C_{a,b}\)
and \(\text{\upshape S}(\mathcal C_{a,b},\mathcal C_{a,b})\subseteq\mathcal C_{a,b}\).
For \(H\in\mathcal C^\circ_{a,b}\), the equality \(H(t,0)=0\)
gives \(\text{\upshape S}^>(G,H)=\text{\upshape S}(G,H)\).
In each convolution, \(H\) is evaluated at \((a-\zeta,b+\zeta)\) or
\((a+\zeta,b-\zeta)\), whose coordinates sum to \(t\). Thus a factor
\((a+b)^k\) in \(H\) can be taken outside the sum as \(t^k\).
Since \(\mathcal D_{a,b}\) is closed under multiplication by
\((a+b)^k\) and finite nonnegative rational linear combinations,
linearity in \(H\) gives \eqref{eq:one-sided-closure} for every
\(H\in\mathcal D_{a,b}\).
\end{proof}

Fix a rank-\(r\) lattice path matroid \(M(L,U)\) on \([n]\),
where \(n\ge2\) and \(L_j<U_j\) for \(0<j<n\).
For \(J\subseteq\{j:2\le j<n,\ L_j-L_{j-1}=1\}\), set
\[
\mathcal R_{L,U,J}(t)=
\{x\in t\mathcal P(M(L,U)):
S_j(x)>L_jt\text{ for every }j\in J\},\qquad t\in\mathbb R_{>0}.
\]
Let \(\beta=(\beta_1,\ldots,\beta_{n-1})\) range over the integer
sequences satisfying
\[
\begin{aligned}
&\beta_1=0,\qquad\beta_{n-1}=r-1,\\
&\beta_j-\beta_{j-1}\in\{0,1\} &&(2\le j<n),\\
&L_j\le\beta_j<U_j &&(1\le j<n).
\end{aligned}
\]
Define
\[
\mathcal R_\beta(t)=
\{x\in[0,t]^n:S_n(x)=rt,\ 
\beta_jt\le S_j(x)\le(\beta_j+1)t\ (1\le j<n)\}.
\]
The polytopes \(\mathcal R_\beta(1)\) are the maximal cells of
the snake subdivision of \(\mathcal P(M(L,U))\); see
Bidkhori~\cite[Lemma~4.3.5]{BidkhoriThesis}.
To obtain a disjoint partition of \(\mathcal R_{L,U,J}(t)\), set
\begin{equation}
I_\beta(J)=\{j:2\le j<n,\ \beta_j-\beta_{j-1}=1,
\text{ and either }\beta_j>L_j\text{ or }j\in J\}.
\label{eq:strip-open-indices}
\end{equation}
Let \(\mathcal R_\beta^\circ(t)\) be the subset of
\(\mathcal R_\beta(t)\) satisfying \(S_j(x)>\beta_jt\)
for \(j\in I_\beta(J)\).

\begin{lemma}\label{lem:half-open-snake-partition}
For every real \(t>0\),
\begin{equation}
\mathcal R_{L,U,J}(t)=\bigsqcup_\beta\mathcal R_\beta^\circ(t).
\label{eq:ceiling-strip-partition}
\end{equation}
\end{lemma}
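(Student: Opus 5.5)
The plan is to prove \eqref{eq:ceiling-strip-partition} pointwise. For each \(x\in\mathcal R_{L,U,J}(t)\) I will show there is exactly one admissible \(\beta\) with \(x\in\mathcal R_\beta^\circ(t)\). I will also show that every \(\mathcal R_\beta^\circ(t)\) lies in \(\mathcal R_{L,U,J}(t)\). By scaling I take \(t=1\), and write \(y_j=S_j(x)\). By Lemma~\ref{lem:multipath-polytope} with \(w=0\), the point \(x\) satisfies \(0\le y_j-y_{j-1}\le1\), \(y_n=r\), and \(L_j\le y_j\le U_j\). Two boundary facts are used repeatedly. First, \(L_1<U_1\le1\) forces \(L_1=0\), which is consistent with \(\beta_1=0\). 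Second, \(U_{n-1}\le r\) and \(L_{n-1}\ge r-1\), consistent with \(\beta_{n-1}=r-1\).

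\emph{Existence and uniqueness via a greedy rule.} Suppose \(\beta_{j-1}\) with \(\beta_{j-1}\le y_{j-1}\le\beta_{j-1}+1\) is already fixed. Then \(y_j\in[\beta_{j-1},\beta_{j-1}+2]\), and there are two candidates for \(\beta_j\).
\begin{enumerate}[label=(\alph*)]
\item The choice \(\beta_j=\beta_{j-1}\) requires \(y_j\le\beta_{j-1}+1\).
\item The choice \(\beta_j=\beta_{j-1}+1\) requires \(y_j\ge\beta_{j-1}+1\). The inequality must be strict exactly when \(j\in I_\beta(J)\), that is, when \(\beta_{j-1}+1>L_j\) or \(j\in J\).
\end{enumerate}
Hence the choice is forced. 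If \(y_j>\beta_{j-1}+1\), only (b) is possible. If \(y_j<\beta_{j-1}+1\), only (a) is possible. In the tie case \(y_j=\beta_{j-1}+1\), option (b) is allowed only if \(\beta_{j-1}+1=L_j\) and \(j\notin J\). In that situation I must also rule out (a), since option (a) is closed; I expect (a) to fail then through the constraint \(\beta_j\ge L_j\). This forced recursion gives uniqueness, and hence disjointness. For existence, I check along the induction that the resulting \(\beta\) satisfies \(L_j\le\beta_j<U_j\) and the endpoint condition \(\beta_{n-1}=r-1\). Both follow from \(L_j\le y_j\le U_j\), the integrality of \(L_j,U_j\), and the unit-step structure of \(L\) and \(U\).

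\emph{Converse inclusion.} Let \(x\in\mathcal R_\beta^\circ(1)\). The bounds \(\beta_j\le y_j\le\beta_j+1\) together with \(L_j\le\beta_j<U_j\) give \(L_j\le y_j\le U_j\). By Lemma~\ref{lem:multipath-polytope} this yields \(x\in\mathcal P(M(L,U))\). It remains to show \(y_j>L_j\) for \(j\in J\). If \(\beta_j>L_j\), this holds. If \(\beta_j=L_j\) and \(\beta_j-\beta_{j-1}=1\), then \(j\in I_\beta(J)\), and strictness is imposed directly. The remaining case is \(\beta_j=\beta_{j-1}=L_j=L_{j-1}+1\). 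Here I go back to the last index \(i<j\) where \(\beta\) increments, so that \(\beta_i=\beta_{i-1}+1=\beta_j\). Since \(L_i\le\beta_i\), and \(L\) is weakly increasing, either \(\beta_i>L_i\) or \(L_i=L_j\); the latter cannot happen when \(L\) increments between \(i\) and \(j\). So \(i\in I_\beta(J)\) and \(y_i>\beta_i=L_j\). Then \(y_j\ge y_i>L_j\).

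\emph{Main obstacle.} I expect the hardest part to be this tie propagation, together with the dual tie case in the greedy step. Both are prefix-minimum arguments: strictness at \(j\) is inherited from the last increment index before \(j\), where \(\beta\) strictly exceeds the lower boundary. I would first set up the bookkeeping of "last increment before \(j\)" carefully, using \(L_{j}-L_{j-1}=1\) for \(j\in J\) and \(\beta_1=0=L_1\), and then run both directions through it.
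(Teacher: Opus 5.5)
Your proof is correct and produces the same cell as the paper, but the argument is organized differently.

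\textbf{How the paper proceeds.} It writes the cell in closed form, $\beta_j(x)=\max\{\lceil S_j(x)/t\rceil-1,L_j\}$, and proves the partition first for $J=\varnothing$. For disjointness it notes that any $\gamma$ with $x\in\mathcal R_\gamma(t)$ dominates $\beta(x)$; at the first index where they differ one finds $S_j(x)=\gamma_jt$ at an increment with $\gamma_j>L_j$. It then deduces the case of general $J$. The formula forces $\beta_{j-1}(x)=L_j-1$ whenever $S_j(x)=L_jt$ with $j\in J$, so $j\in I_{\beta(x)}(J)$. The conditions indexed by $I_\beta(J)\setminus I_\beta(\varnothing)$ remove exactly such points.

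\textbf{How your route differs.} You build $\beta$ step by step with $J$ already in the rule, and your output agrees with the paper's formula. Your uniqueness comes from the forcedness of each step, which is the paper's first-difference argument in local form. You prove $\mathcal R_\beta^\circ(t)\subseteq\mathcal R_{L,U,J}(t)$ directly, by propagating strictness from the last increment. In the paper, the flat case $\beta_{j-1}=\beta_j=L_j$ with $S_j(x)=L_jt$ is instead excluded by uniqueness together with the formula. The closed formula makes existence a one-liner and reduces everything to $J=\varnothing$. Your route needs no formula, but pays for it with the tie-case analysis and the extra propagation argument.

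\textbf{One correction in your existence step.} The bound $L_j\le\beta_j$ does not follow from $L_j\le y_j\le U_j$ alone. Take the tie case $y_j=\beta_{j-1}+1$ with $\beta_{j-1}=L_j-1$ and $j\in J$. Option (b) is excluded, because $j\in I_\beta(J)$ requires strictness. Option (a) gives $\beta_j=L_j-1<L_j$. So neither choice is admissible. This case is exactly $y_j=L_j$ with $j\in J$, which the defining condition $S_j(x)>L_jt$ of $\mathcal R_{L,U,J}(t)$ rules out. Invoke that hypothesis explicitly here, since it is the only point where existence uses it.
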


\begin{proof}
First assume \(J=\varnothing\). For \(x\in t\mathcal P(M(L,U))\), set
\begin{equation}
\beta_j(x)=\max\{\lceil S_j(x)/t\rceil-1,L_j\}.
\label{eq:ceiling-strip-owner}
\end{equation}
The sequences \(\lceil S_j(x)/t\rceil-1\) and \(L_j\) have
increments in \(\{0,1\}\), so their maximum does too.
Since \(L_j<U_j\) for \(0<j<n\), the boundary heights satisfy
\(L_1=0,U_1=1,L_{n-1}=r-1,U_{n-1}=r\).
Thus \(\beta_1(x)=0\) and \(\beta_{n-1}(x)=r-1\).
The inequalities \(L_jt\le S_j(x)\le U_jt\) give
\(L_j\le\beta_j(x)<U_j\) and
\(\beta_j(x)t\le S_j(x)\le(\beta_j(x)+1)t\), with
\(S_j(x)>\beta_j(x)t\) whenever \(\beta_j(x)>L_j\).
Thus \(x\in\mathcal R_{\beta(x)}^\circ(t)\).

If \(x\in\mathcal R_\gamma(t)\), the inequalities
\(\gamma_jt\le S_j(x)\le(\gamma_j+1)t\) and \(\gamma_j\ge L_j\)
give \(\gamma\ge\beta(x)\) coordinatewise.
If \(\gamma\ne\beta(x)\), take the smallest \(j\) with
\(\gamma_j>\beta_j(x)\). Since \(\gamma_1=\beta_1(x)=0\),
we have \(j\ge2\) and \(\gamma_{j-1}=\beta_{j-1}(x)\).
Since \(\gamma\) and \(\beta(x)\) have increments in \(\{0,1\}\),
we have \(\gamma_j=\gamma_{j-1}+1=\beta_j(x)+1\).
Now \(\gamma_jt\le S_j(x)\le(\beta_j(x)+1)t=\gamma_jt\),
so \(S_j(x)=\gamma_jt\), while \(\gamma_j>L_j\).
Thus \(j\in I_\gamma(\varnothing)\) and
\(x\notin\mathcal R_\gamma^\circ(t)\), proving disjointness.

For general \(J\), if \(S_j(x)=L_jt\) with \(j\in J\),
then \eqref{eq:ceiling-strip-owner} gives \(\beta_j(x)=L_j\)
and \(\beta_{j-1}(x)=L_j-1\), using
\(L_{j-1}=L_j-1\) and \(S_{j-1}(x)\le S_j(x)\).
Hence \(j\in I_{\beta(x)}(J)\), so
\(x\notin\mathcal R_{\beta(x)}^\circ(t)\).
Conversely, each index in \(I_\beta(J)\mathbin{\backslash} I_\beta(\varnothing)\)
has \(j\in J\) and \(\beta_j=L_j\).
The condition \(S_j(x)>\beta_jt\) therefore removes exactly
the points satisfying \(S_j(x)=L_jt\). This proves
\eqref{eq:ceiling-strip-partition}.
\end{proof}

Fix \(\mathbf c,\mathbf d\in\mathbb Z_{\ge0}^n\), put
\(f_i(a,b)=\binom{a+c_i}{c_i}\binom{b+d_i}{d_i}\), and
consider the weighted sum
\begin{equation}
\sum_{x\in\mathcal R_{L,U,J}(t)\cap\mathbb Z^n}
\prod_{i=1}^{n}f_i(x_i,t-x_i),\qquad t\in\mathbb Z_{>0}.
\label{eq:weighted-open-path-count}
\end{equation}

\begin{lemma}\label{lem:weighted-open-path}
The sum in \eqref{eq:weighted-open-path-count} extends to
a polynomial in \(t\) with nonnegative rational coefficients.
\end{lemma}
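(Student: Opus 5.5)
The plan is to combine the partition of Lemma~\ref{lem:half-open-snake-partition} with the one-sided closure of Lemma~\ref{lem:one-sided-closure}. By \eqref{eq:ceiling-strip-partition}, the sum \eqref{eq:weighted-open-path-count} splits as a finite sum over the admissible sequences \(\beta\) of the weighted counts over \(\mathcal R^\circ_\beta(t)\cap\mathbb Z^n\). It therefore suffices to show that, for each fixed \(\beta\), the weighted count over \(\mathcal R^\circ_\beta(t)\) agrees for all integers \(t>0\) with a polynomial in \(\mathbb Q_{\ge0}[t]\). The set \(\mathcal R_\beta(t)\) is a snake-type polytope: the prefix sums are confined to unit strips. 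I would process the coordinates from left to right, carrying a single load parameter, exactly as in the S-P recursions of Lemma~\ref{lem:load-recursion}.

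For fixed \(\beta\), put \(a_j=S_j(x)-\beta_jt\in[0,t]\) and \(b_j=t-a_j\). Let \(\Phi_j(a,b)\) be the weighted count
\[
\sum\prod_{i\le j}f_i(x_i,t-x_i)
\]
over integer vectors \((x_1,\dots,x_j)\in[0,t]^j\) that satisfy the strip constraints and the strict constraints in \(I_\beta(J)\) up to index \(j\), with \(a_j=a\). Since \(\beta_1=0\), we have \(\Phi_1=f_1\). The step \(j-1\to j\) has two cases.
\begin{enumerate}[label=\textup{(\roman*)}]
\item If \(\beta_j=\beta_{j-1}\), then \(a_j=a_{j-1}+x_j\). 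Writing \(x_j=a-\zeta\) gives a \(\text{\upshape P}\)-convolution of \(\Phi_{j-1}\) with \(f_j\).
\item If \(\beta_j=\beta_{j-1}+1\), then \(a_j=a_{j-1}+x_j-t\), equivalently \(b_{j-1}+b_j'=b_j\) with \(b_j'=t-x_j\). This gives an \(\text{\upshape S}\)-convolution.
\end{enumerate}
When \(j\in I_\beta(J)\), the strict inequality \(S_j(x)>\beta_jt\) removes exactly one endpoint of the summation range, which turns \(\text{\upshape S}\) into \(\text{\upshape S}^>\) of \eqref{eq:strict-s-convolution}.

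I will orient each convolution so that the atomic weight \(f_j=\binom{a+c_j}{c_j}\binom{b+d_j}{d_j}\) plays the role of \(G\) and the accumulated \(\Phi_{j-1}\) plays the role of \(H\). For \(\text{\upshape P}\) and \(\text{\upshape S}\) this uses the commutativity of the convolution sums after the substitution \(\zeta\mapsto a-\zeta\), respectively \(\zeta\mapsto b-\zeta\). Then Lemma~\ref{lem:one-sided-closure} gives \(\Phi_j\in\mathcal D_{a,b}\) by induction on \(j\), starting from \(\Phi_1=f_1\in\mathcal C_{a,b}\subseteq\mathcal D_{a,b}\).

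At the last index, \(\beta_{n-1}=r-1\) and \(S_n(x)=rt\) force the final step to be a series step with the end condition \(S_n=rt\). The count over \(\mathcal R^\circ_\beta(t)\) is therefore \(\Phi_n\) evaluated at an endpoint \((0,t)\) or \((t,0)\), that is, \(\Phi_n(t-b,b)\) at \(b=t\) or \(b=0\) depending on orientation. Every member of \(\mathcal D_{a,b}\) has nonnegative coefficients in \(a,b\), so this evaluation lies in \(\mathbb Q_{\ge0}[t]\). Summing over the finitely many \(\beta\) gives the claim.

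The main obstacle is matching the strictness to the exact form of \(\text{\upshape S}^>\). In \(\text{\upshape S}^>(G,H)\) the excluded term is \(\zeta=b\), where \(H\) is evaluated at second argument \(0\). So the strict condition must appear as the vanishing of the \(b\)-coordinate of the state passed into \(H\), not as a condition on the output state \(a_j\). If the left-to-right orientation does not produce this form, I would first try the complementary state \(b_j=(\beta_j+1)t-S_j(x)\). Failing that, I would reverse the order of coordinates and work with suffix sums \(rt-S_j(x)\), so that the strictness at index \(j\in I_\beta(J)\) becomes positivity of the \(b\)-component fed into the next series step. This reduces the strict step to \(\text{\upshape S}^>(f_j,\Phi)\), as allowed by Lemma~\ref{lem:one-sided-closure}. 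The remaining concerns are routine: the index \(j=1\) never lies in \(I_\beta(J)\), and the polynomial extensions of the convolutions agree with the counts for every integer \(t>0\).
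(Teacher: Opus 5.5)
\paragraph{The key step fails in your chosen orientation.} Your main line breaks at the point you flag. With the left-to-right state $a_j=S_j(x)-\beta_jt$, an index $j\in I_\beta(J)$ is an up-step, so $\Phi_j=\text{\upshape S}(f_j,\Phi_{j-1})$ with $\zeta=t-x_j$. The strict inequality $S_j(x)>\beta_jt$ then only says that the \emph{output} satisfies $a>0$; it does not truncate the summation range. The term $\zeta=b$ deleted by $\text{\upshape S}^>(f_j,\Phi_{j-1})$ is the one where $\Phi_{j-1}$ sits at $(t,0)$, i.e.\ $S_{j-1}(x)=\beta_jt$, so that operator counts the wrong set.

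Since $\Phi_j(a,b)\cdot[a>0]$ is not a polynomial, the strictness has to be pushed into step $j+1$. The type of that step is governed by $\beta_{j+1}-\beta_j$, not by $j$:
\begin{itemize}
\item If step $j+1$ is flat, you get $\sum_{\zeta=0}^{a-1}f_{j+1}(\zeta,t-\zeta)\Phi_j(a-\zeta,b+\zeta)=\iota\,\text{\upshape S}^>(\iota f_{j+1},\iota\Phi_j)$. Lemma~\ref{lem:one-sided-closure} does not cover this, because $\iota\mathcal D_{a,b}\not\subseteq\mathcal D_{a,b}$; for instance $\binom{a+d-1}{d}\notin\mathcal D_{a,b}$.
\item If step $j+1$ is an up-step, the strictness deletes the $\zeta=0$ term only when $a=0$, which is not a polynomial operation.
\end{itemize}
Your first fallback, the complementary state, swaps the roles of $\text{\upshape P}$ and $\text{\upshape S}$. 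It does turn ``strict index followed by a flat step'' into an $\text{\upshape S}^>$. But it reproduces the second failure whenever a strict up-step is followed by another up-step. This already happens for $U_{3,5}$ with $\beta=(0,1,2,2)$, where $I_\beta(\varnothing)=\{2,3\}$.

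\paragraph{What works is your last fallback, which is the paper's proof.} The paper processes the coordinates from right to left. Put $h_j=S_j(x)-\beta_jt$ and let $H_j(t-h_j,h_j)$ be the total weight of the admissible $x_{j+1},\dots,x_n$. Start from $H_{n-1}=f_n$, using $x_n=t-h_{n-1}$.

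The step adjoining $x_j$ has the type of index $j$ itself. At an up-step, $\zeta=t-x_j=h_{j-1}-h_j$ ranges over $[0,b]$ with $(a,b)=(t-h_{j-1},h_{j-1})$. In this step $H_j$ is evaluated at $(a+\zeta,b-\zeta)=(t-h_j,h_j)$. So the condition $h_j>0$ is exactly the deletion of $\zeta=b$, and gives $H_{j-1}=\text{\upshape S}^>(f_j,H_j)$. Here the atomic weight occupies the $G$-slot, as Lemma~\ref{lem:one-sided-closure} requires.

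The count on $\mathcal R^\circ_\beta(t)$ is $\text{\upshape P}(f_1,H_1)(t,0)$, and index $1$ is never strict. Nonnegativity then follows from $\mathcal D_{a,b}\subseteq\mathbb Q_{\ge0}[a,b]$.

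As written, your proposal commits to the orientation in which the key strictness step is false. It lists the correct orientation only as a contingency and never verifies it, so the central step is unproved.
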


\begin{proof}
By Lemma~\ref{lem:half-open-snake-partition}, it suffices to show
that the weighted count on each \(\mathcal R_\beta^\circ(t)\)
agrees with a polynomial in \(\mathbb Q_{\ge0}[t]\).
Fix \(\beta\). For \(x\in\mathcal R_\beta^\circ(t)\cap\mathbb Z^n\),
put \(h_j=S_j(x)-\beta_jt\). Then
\(0\le h_j\le t\), with \(h_j>0\) for \(j\in I_\beta(J)\), and
\[
x_1=h_1,\qquad x_n=t-h_{n-1},\qquad
x_j=\begin{cases}
h_j-h_{j-1},&\beta_j-\beta_{j-1}=0,\\
t+h_j-h_{j-1},&\beta_j-\beta_{j-1}=1
\end{cases}
\quad(2\le j<n).
\]
Set \(H_{n-1}=f_n\) and recursively define \(H_{j-1}\) by
\begin{equation}
H_{j-1}=\begin{cases}
\text{\upshape P}(f_j,H_j),&\beta_j-\beta_{j-1}=0,\\
\text{\upshape S}(f_j,H_j),&\beta_j-\beta_{j-1}=1,\ j\notin I_\beta(J),\\
\text{\upshape S}^>(f_j,H_j),&j\in I_\beta(J),
\end{cases}
\qquad j=n-1,\ldots,2.
\label{eq:weighted-strip-recursion}
\end{equation}
For each fixed \(h_j\in\{0,\ldots,t\}\), we show by descending
induction on \(j\) that \(H_j(t-h_j,h_j)\) is the total weight
of the admissible integer choices of \(x_{j+1},\ldots,x_n\),
imposing \(h_i>0\) only for \(i\in I_\beta(J)\) with \(i>j\).
For \(j=n-1\), the only coordinate to the right is
\(x_n=t-h_{n-1}\), so \(H_{n-1}=f_n\) has this interpretation.
If \(j\in I_\beta(J)\), the condition \(h_j>0\) is imposed
when summing over \(h_j\) to obtain \(H_{j-1}\).
For the induction step, set \((a,b)=(t-h_{j-1},h_{j-1})\).
If \(\beta_j=\beta_{j-1}\), then
\(\zeta=x_j=h_j-h_{j-1}\) ranges from \(0\) to \(a\), with
summand \(f_j(\zeta,t-\zeta)H_j(a-\zeta,b+\zeta)\).
If \(\beta_j=\beta_{j-1}+1\), then
\(\zeta=t-x_j=h_{j-1}-h_j\) ranges from \(0\) to \(b\), with
summand \(f_j(t-\zeta,\zeta)H_j(a+\zeta,b-\zeta)\); requiring \(h_j=b-\zeta>0\)
changes the upper limit to \(b-1\).
The value \(H_1(t-h_1,h_1)\) includes the weights of
\(x_2,\ldots,x_n\). Multiplying by the weight of \(x_1=h_1\)
and summing gives the total weight of the lattice points in
\(\mathcal R_\beta^\circ(t)\):
\[
\sum_{h_1=0}^{t}f_1(h_1,t-h_1)H_1(t-h_1,h_1)
=\text{\upshape P}(f_1,H_1)(t,0).
\]
Starting from \(H_{n-1}=f_n\in\mathcal D_{a,b}\),
Lemma~\ref{lem:one-sided-closure} gives
\(H_j\in\mathcal D_{a,b}\Longrightarrow H_{j-1}\in\mathcal D_{a,b}\)
at each step of \eqref{eq:weighted-strip-recursion}, and finally
\(\text{\upshape P}(f_1,H_1)\in\mathcal D_{a,b}\).
Thus \(\text{\upshape P}(f_1,H_1)(t,0)\in\mathbb Q_{\ge0}[t]\).
Summing \(\text{\upshape P}(f_1,H_1)(t,0)\) over the finitely
many \(\beta\) gives a polynomial in \(\mathbb Q_{\ge0}[t]\)
agreeing with \eqref{eq:weighted-open-path-count} for every
positive integer \(t\).
\end{proof}

\subsection{Weighted counts for multi-path matroids}\label{subsec:cyclic-cuts}

We partition the lattice points in dilates of a connected multi-path
matroid polytope into sets that, after cyclic coordinate permutations,
are counted by the weighted half-open lattice path formulas of
Section~\ref{subsec:weighted-path-counts}. This verifies the hypothesis
of Lemma~\ref{lem:extension-criterion}.

Let \(M=N(L,U)\) be a rank-\(r\) multi-path matroid on \([n]\),
where \(0<r<n\). Throughout Section~\ref{subsec:cyclic-cuts}, read coordinate
subscripts modulo \(n\), with representatives in \([n]\), and
extend the boundary sequences by
\(L_{q+n}=L_q+r\) and \(U_{q+n}=U_q+r\).
For real \(t>0\) and \(x\in\mathbb R^n\) with \(S_n(x)=rt\), extend
the prefix sums by \(S_{q+n}(x)=S_q(x)+rt\), and put
\(h_q=S_q(x)-L_qt\).
For \(j\in\{0,\ldots,n-1\}\), define
\begin{equation}
L^{(j)}_q=L_{j+q}-L_j,\qquad
U^{(j)}_q=\min\{q,r,U_{j+q}-L_j\},\qquad 0\le q\le n.
\label{eq:cyclic-cut-paths}
\end{equation}
Both \(L^{(j)}\) and \(U^{(j)}\) start at \(0\), end at \(r\),
and have increments in \(\{0,1\}\).
Since \(L^{(j)}_q\le q\), \(L^{(j)}_q\le r\), and
\(L_{j+q}\le U_{j+q}\), we have \(L^{(j)}_q\le U^{(j)}_q\).
Thus \(M(L^{(j)},U^{(j)})\) is a lattice path matroid.

\begin{lemma}\label{lem:cyclic-cut-bijection}
For real \(t>0\) and \(j\in\{0,\ldots,n-1\}\), the rotation
\(y_i=x_{j+i}\) gives a bijection
\[
\bigl\{x\in t\mathcal P(M):h_j=\min_{0\le k<n}h_k\bigr\}
\longrightarrow t\mathcal P\bigl(M(L^{(j)},U^{(j)})\bigr)
\]
that preserves the integer lattice.
If \(L_j=U_j\), the source set equals \(t\mathcal P(M)\).
\end{lemma}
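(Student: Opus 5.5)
The plan is to rewrite both sides in terms of the shifted prefix heights \(h_q=S_q(x)-L_qt\) and check that the defining inequalities match term by term under the rotation. First I will reformulate membership in \(t\mathcal P(M)\) using Lemma~\ref{lem:multipath-polytope}. A vector \(x\in[0,t]^n\) with \(S_n(x)=rt\) lies in \(t\mathcal P(M)\) if and only if some real \(\theta\) satisfies \(-h_q\le\theta\le(U_q-L_q)t-h_q\) for \(0\le q\le n\). Such a \(\theta\) exists exactly when
\[
h_k-\min_{0\le q\le n}h_q\le(U_k-L_k)t\qquad(0\le k\le n).
\]
The extensions \(S_{q+n}=S_q+rt\), \(L_{q+n}=L_q+r\) and \(U_{q+n}=U_q+r\) make both \(h_q\) and \(U_q-L_q\) periodic in \(q\) with period \(n\). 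Hence \(h_n=h_0=0\) and \(U_n-L_n=w=U_0-L_0\), and all minima may be taken over \(0\le k<n\), or over any window of \(n\) consecutive indices. Because \(\theta\) is allowed to be real here, this Fourier--Motzkin elimination is exact; I expect this to be the only delicate point.

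Next I will translate the target polytope. The rotation \(y_i=x_{j+i}\) gives \(y\in[0,t]^n\) and \(S_n(y)=rt\), and for \(0\le q\le n\) it gives \(S_q(y)=S_{j+q}(x)-S_j(x)\). Therefore
\[
S_q(y)-L^{(j)}_qt=h_{j+q}-h_j.
\]
The lattice path conditions (with \(\theta=0\)) are \(L^{(j)}_qt\le S_q(y)\le U^{(j)}_qt\). I treat the three parts of the upper bound \(U^{(j)}_q=\min\{q,r,U_{j+q}-L_j\}\) separately.
\begin{enumerate}[label=\textup{(\alph*)}]
\item The lower bound reads \(h_{j+q}\ge h_j\) for \(0\le q\le n\). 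By periodicity this says precisely that \(h_j=\min_k h_k\).
\item The upper bounds \(S_q(y)\le qt\) and \(S_q(y)\le rt\) hold automatically. The first follows from \(y\in[0,t]^n\). The second follows from \(S_q(y)\le S_n(y)=rt\), since the remaining coordinates are nonnegative.
\item The upper bound \(S_q(y)\le(U_{j+q}-L_j)t\) is equivalent to \(h_{j+q}-h_j\le(U_{j+q}-L_{j+q})t\).
\end{enumerate}
When \(h_j\) is the minimum, condition (c) is exactly the displayed feasibility criterion evaluated at \(k=j+q\). As \(q\) runs over \(0,\ldots,n\), the index \(j+q\) covers every residue class, so (c) for all \(q\) is the full criterion.

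Combining the two steps shows that the rotation sends the source set into \(t\mathcal P(M(L^{(j)},U^{(j)}))\). The inverse rotation sends the target back into \(t\mathcal P(M)\) with \(h_j\) minimal, because the same chain of equivalences runs in reverse. A coordinate permutation is a bijection of \(\mathbb Z^n\), so it preserves the integer lattice.

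Finally, suppose \(L_j=U_j\). Then the criterion at \(k=j\) reads \(h_j-\min_q h_q\le0\). So every point of \(t\mathcal P(M)\) already has \(h_j\) minimal, and the source set equals \(t\mathcal P(M)\).
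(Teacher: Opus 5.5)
Your proof is correct and takes essentially the same route as the paper. Your $\theta$-free criterion $h_k-\min_q h_q\le(U_k-L_k)t$ just says that the minimal shift $\theta_*=-\min_q h_q$ is feasible, which is the fact the paper uses directly. The rest is the same term-by-term matching, via $S_q(y)-L^{(j)}_qt=h_{j+q}-h_j$ and the three parts of $U^{(j)}_q$.
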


\begin{proof}
Let \(x\in t\mathcal P(M)\) satisfy
\(h_j=\min_{0\le k<n}h_k\), and set \(y_i=x_{j+i}\).
The smallest shift satisfying the lower inequalities in
\eqref{eq:multipath-polytope} is
\[
\theta_*=\max_{0\le q<n}\bigl(L_qt-S_q(x)\bigr)=-h_j.
\]
Choose a shift \(\theta\) satisfying \eqref{eq:multipath-polytope}.
Since \(\theta_*\le\theta\), we have
\(S_q(x)+\theta_*\le S_q(x)+\theta\le U_qt\) for \(0\le q\le n\).
Substituting \(\theta_*=L_jt-S_j(x)\) into the periodically
extended inequalities
\(L_{j+q}t\le S_{j+q}(x)+\theta_*\le U_{j+q}t\)
from \eqref{eq:multipath-polytope}, and using
\(S_q(y)=S_{j+q}(x)-S_j(x)\), we obtain
\(L^{(j)}_qt\le S_q(y)\le(U_{j+q}-L_j)t\) for \(0\le q\le n\).
Since \(0\le y_i\le t\) and \(S_n(y)=rt\), we also have
\(S_q(y)\le qt\) and \(S_q(y)\le rt\).
The inequalities \(L^{(j)}_qt\le S_q(y)\le U^{(j)}_qt\)
obtained from \eqref{eq:cyclic-cut-paths}, together with
\(y\in[0,t]^n\) and \(S_n(y)=rt\), show that
\[
y\in t\mathcal P\bigl(M(L^{(j)},U^{(j)})\bigr).
\]
Conversely, given \(y\in t\mathcal P(M(L^{(j)},U^{(j)}))\),
set \(x_{j+i}=y_i\) for \(1\le i\le n\) and
\(\theta=L_jt-S_j(x)\). Then \(x\in[0,t]^n\), \(S_n(x)=rt\), and
\[
L_{j+q}t\le S_q(y)+L_jt=S_{j+q}(x)+\theta\le U_{j+q}t
\qquad(0\le q\le n).
\]
Periodicity gives \(L_kt\le S_k(x)+\theta\le U_kt\) for
\(0\le k\le n\), so \(x\in t\mathcal P(M)\) by
\eqref{eq:multipath-polytope}.
Moreover, \(h_{j+q}-h_j=S_q(y)-L^{(j)}_qt\ge0\) for
\(0\le q\le n\). Since \(h_{q+n}=h_q\), we obtain
\(h_j=\min_{0\le k<n}h_k\).
The maps \(y_i=x_{j+i}\) and \(x_{j+i}=y_i\) are inverse
coordinate permutations and preserve the integer lattice.

If \(L_j=U_j\), the inequalities at index \(j\) in
\eqref{eq:multipath-polytope} force
\(\theta=L_jt-S_j(x)=-h_j\) for every \(x\in t\mathcal P(M)\).
The lower inequalities \(L_qt\le S_q(x)+\theta\) then give
\(h_q\ge h_j\) for \(0\le q<n\).
Hence \(h_j=\min_{0\le q<n}h_q\) for every \(x\in t\mathcal P(M)\).
\end{proof}

\begin{theorem}\label{thm:weighted-mpm}
Let \(M\) be a connected multi-path matroid on \([n]\), where
\(n\ge2\). Then \(W_{M;\mathbf c,\mathbf d}\) has nonnegative
coefficients for all \(\mathbf c,\mathbf d\in\mathbb Z_{\ge0}^n\).
\end{theorem}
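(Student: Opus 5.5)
Write \(M=N(L,U)\) of rank \(r\). Connectedness with \(n\ge2\) gives \(0<r<n\), so the cyclic set-up of Section~\ref{subsec:cyclic-cuts} applies.

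The strategy is to partition the lattice points of \(t\mathcal P(M)\) according to where the minimum of \(h_0,\ldots,h_{n-1}\) is first attained. Then each part is transported by Lemma~\ref{lem:cyclic-cut-bijection} to a half-open lattice path polytope \(\mathcal R_{L^{(j)},U^{(j)},J}(t)\), whose weighted count is handled by Lemma~\ref{lem:weighted-open-path}.

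\textbf{Step 1: the partition.} For \(x\in t\mathcal P(M)\), let \(j(x)\) be the smallest index in \(\{0,\ldots,n-1\}\) with \(h_{j}=\min_k h_k\). The piece with \(j(x)=j\) is the set of Lemma~\ref{lem:cyclic-cut-bijection}, cut by the strict conditions \(h_i>h_j\) for \(i<j\). After the rotation \(y_q=x_{j+q}\) we have \(h_{j+q}-h_j=S_q(y)-L^{(j)}_qt\). Hence the strict conditions become \(S_q(y)>L^{(j)}_qt\) for \(q\in\{n-j,\ldots,n-1\}\), reading indices cyclically.

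\textbf{Step 2: matching the hypotheses of Lemma~\ref{lem:weighted-open-path}.} That lemma requires \(L^{(j)}_q<U^{(j)}_q\) for \(0<q<n\). It also requires each strict index \(q\) to satisfy \(2\le q<n\) and \(L^{(j)}_q-L^{(j)}_{q-1}=1\).

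For the first requirement: if \(L^{(j)}_q=U^{(j)}_q\) for some \(0<q<n\), then \(S_q(y)\) is pinned, so the lattice path matroid decomposes as a direct sum. I expect connectedness of \(M\) to rule this out. Where it does not, I will fall back on \(L_j=U_j\) and the last clause of Lemma~\ref{lem:cyclic-cut-bijection}, reducing to a lattice path matroid whose path may be disconnected. Then \(W\) factors over components, and each component has the same form.

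For the second requirement: a strict inequality at a step where \(L^{(j)}\) is flat can be dropped or shifted. If \(L^{(j)}_q=L^{(j)}_{q-1}\) and \(S_q(y)=L^{(j)}_qt\), then \(S_{q-1}(y)\le L^{(j)}_{q-1}t\) forces equality at \(q-1\). So the strict condition at \(q\) can be replaced by one at the last up-step before \(q\), or discarded if none exists. Degenerate indices \(q=1\) need separate care: there \(S_1(y)>0\) is the condition \(y_1>0\), which can be absorbed into the weight by changing \(f\) to \(f(a,b)-f(0,b)\)-type terms. I would rather avoid this. One route is to choose the tie-breaking rule (first versus last minimizer, or a different \(j\)-ordering) so that the strict indices never include \(q=1\).

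\textbf{Step 3: summing.} The weights \(\prod_i f_i(x_i,t-x_i)\) are permuted along with the coordinates, so each piece contributes a weighted half-open count of exactly the form \eqref{eq:weighted-open-path-count}, with permuted \(\mathbf c,\mathbf d\). By Lemma~\ref{lem:weighted-open-path}, each such count is in \(\mathbb Q_{\ge0}[t]\) for \(t>0\). Summing over \(j\) gives a polynomial with nonnegative coefficients agreeing with \(W_{M;\mathbf c,\mathbf d}(t)\) for all \(t\ge1\). Both are polynomials, so they coincide. The constant term is \(W(0)=1\ge0\), consistent with this.

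\textbf{Main obstacle.} The hardest part is Step 2: verifying that the strict lower inequalities produced by the tie-breaking land on admissible indices, and that \(L^{(j)}<U^{(j)}\) holds in the interior for connected \(M\). I would first try choosing the minimizer convention so that the strict indices lie where \(L\) increases, and prove the interior strictness of \(L^{(j)}<U^{(j)}\) directly from the connectedness of \(M\).
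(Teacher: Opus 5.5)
There is a genuine gap in Step~2, and with your tie-breaking rule it cannot be closed as you propose. Steps~1 and~3 are the right framework, and they match the paper: partition by the position of \(\min_k h_k\), rotate with Lemma~\ref{lem:cyclic-cut-bijection}, apply Lemma~\ref{lem:weighted-open-path}, and sum.

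\textbf{What fails.} Taking \(j(x)\) to be the smallest minimizer among all of \(0,\ldots,n-1\) produces pieces that violate both hypotheses of Lemma~\ref{lem:weighted-open-path}.

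First, connectedness of \(M\) does not give \(L^{(j)}_q<U^{(j)}_q\). For the whirl of Example~\ref{ex:multipath-whirl}, your piece \(j=0\) contains \(t\mathbf 1_{\{1,3,5\}}\), since all \(h_q=0\) there. Yet \(L^{(0)}=(0,1,1,2,2,3,3)\) and \(U^{(0)}=(0,1,2,3,3,3,3)\) coincide at \(q=1\) and \(q=5\).

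Second, and more seriously, the strict conditions \(h_i>h_j\) \((i<j)\) land on flat steps, including \(q=1\), and your repairs are not equivalences. Write \(h'_q=S_q(y)-L^{(j)}_qt\). A flat step gives \(h'_q\ge h'_{q-1}\), so \(h'_{q-1}>0\) implies \(h'_q>0\) but not conversely. A flat-step condition can therefore be removed only when another imposed condition implies it. Shifting it to the last up-step before \(q\) shrinks the set, and discarding it when no up-step precedes it enlarges the set.

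A concrete instance: \(U_{1,3}=N(L,U)\) with \(L=(0,0,1,1)\) and \(U=L+\mathbf 1\) has \((h_0,h_1,h_2)=(0,x_1,-x_3)\). Your pieces are \(\{x_3=0\}\) for \(j=0\) and \(\{x_3>0\}\) for \(j=2\). The latter rotates to the condition \(y_1>0\) at the flat step \(q=1\) of \(L^{(2)}=(0,0,0,1)\). Lemma~\ref{lem:weighted-open-path} does not allow this condition, and your rule would discard it, double-counting \(\{x_3=0\}\).

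\textbf{The missing idea.} Break ties only over
\(J_L=\{j:L_j-L_{j-1}=1,\ L_{j+1}-L_j=0\}\).
On up-steps of \(L\) we have \(h_j-h_{j-1}=x_j-t\le0\), and on flat steps \(h_j-h_{j-1}=x_j\ge0\). So a minimizer can be slid forward along up-steps and backward along flat steps until it lands in \(J_L\). Hence \(j(x)=\min\{k\in J_L:h_k=\min_q h_q\}\) is well defined.

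For \(j\in J_L\), the path \(L^{(j)}\) starts with a flat step and ends with an up-step, so \(L^{(j)}_q\le\min\{q-1,r-1\}\) for \(0<q<n\). If \(U_k-L_k\ge1\) for all \(k\), then also \(U_{j+q}-L_j\ge L^{(j)}_q+1\), and together these give \(L^{(j)}_q<U^{(j)}_q\).

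The strict conditions now come only from \(k\in J_L\) with \(k<j\). They therefore sit at up-steps of \(L^{(j)}\), at positions \(d\ge2\) because \(L^{(j)}_1=0\).

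The remaining case, \(L_j=U_j\) for some \(j\), needs no partition. By the last clause of Lemma~\ref{lem:cyclic-cut-bijection}, the rotation identifies \(M\) with the connected lattice path matroid \(M(L^{(j)},U^{(j)})\), whose boundary paths meet only at their ends. Lemma~\ref{lem:weighted-open-path} then applies with \(J=\varnothing\).
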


\begin{proof}
Fix \(\mathbf c,\mathbf d\in\mathbb Z_{\ge0}^n\) and put
\(f_i(a,b)=\binom{a+c_i}{c_i}\binom{b+d_i}{d_i}\).
Write \(M=N(L,U)\), with rank \(0<r<n\), and let \(t\) be a
positive integer. Read the subscripts of \(f_i\) modulo \(n\),
with representatives in \([n]\).
By the polynomiality argument for \eqref{eq:weighted-mpm-count},
\(W_{M;\mathbf c,\mathbf d}\in\mathbb Q[t]\). It suffices to
express its values at positive integers as the values of a
polynomial with nonnegative coefficients.

If \(L_j=U_j\) for some \(j\in\{0,\ldots,n-1\}\),
Lemma~\ref{lem:cyclic-cut-bijection} identifies \(M\), by the
rotation \(y_i=x_{j+i}\), with the lattice path matroid
\(M(L^{(j)},U^{(j)})\). Connectedness therefore gives
\(L^{(j)}_q<U^{(j)}_q\) for \(0<q<n\)
\cite[Theorem~3.6]{BoninDeMierNoy2003}.
The rotation gives
\[
W_{M;\mathbf c,\mathbf d}(t)=
\sum_{y\in t\mathcal P(M(L^{(j)},U^{(j)}))\cap\mathbb Z^n}
\prod_{i=1}^n f_{j+i}(y_i,t-y_i).
\]
Applying Lemma~\ref{lem:weighted-open-path} with \(J=\varnothing\)
and weights \(f_{j+i}\) shows that
\(W_{M;\mathbf c,\mathbf d}(t)\) has nonnegative coefficients.

Now assume \(U_j-L_j\ge1\) for \(0\le j<n\), and set
\[
J_L=\{j\in\{0,\ldots,n-1\}:
L_j-L_{j-1}=1,\ L_{j+1}-L_j=0\}.
\]
For \(x\in t\mathcal P(M)\), we have
\(h_j-h_{j-1}=x_j-t\le0\) when \(L_j-L_{j-1}=1\), and
\(h_j-h_{j-1}=x_j\ge0\) otherwise.
Thus \(h_j\) is nonincreasing along consecutive upward steps of
the lower path and nondecreasing along consecutive horizontal steps.
The minimum of \(h_0,\ldots,h_{n-1}\) is therefore attained at
an index in \(J_L\). Define
\(j(x)=\min\{k\in J_L:h_k=\min_{0\le q<n}h_q\}\).

Fix \(j\in J_L\). The lower path \(L^{(j)}\) starts with a
horizontal step and ends with an upward step. For \(0<q<n\),
we have \(L^{(j)}_q\le q-1\), \(L^{(j)}_q\le r-1\), and
\(U_{j+q}-L_j\ge L^{(j)}_q+1\).
By \eqref{eq:cyclic-cut-paths},
\(L^{(j)}_q<U^{(j)}_q\) for \(0<q<n\).
For \(x\in t\mathcal P(M)\) with \(h_j=\min_{0\le k<n}h_k\), the condition
\(j(x)=j\) is equivalent to \(h_k>h_j\) for all \(k\in J_L\)
with \(k<j\).
Under \(y_i=x_{j+i}\), periodicity gives
\(h_k-h_j=S_d(y)-L^{(j)}_dt\) for \(d=(k-j)\bmod n\).
Hence the strict inequalities specifying \(j(x)=j\) become
\begin{equation}
S_d(y)>L^{(j)}_dt\quad(d\in J_j),\qquad
J_j=\{(k-j)\bmod n:k\in J_L,\ k<j\},
\label{eq:cyclic-cut-openings}
\end{equation}
with residues in \(\{1,\ldots,n-1\}\).
For \(d=(k-j)\bmod n\in J_j\), we have
\(L^{(j)}_d-L^{(j)}_{d-1}=L_k-L_{k-1}=1\).
Since \(L^{(j)}_1=L^{(j)}_0=0\), such an index satisfies \(d\ge2\).
Thus
\[
J_j\subseteq\{d:2\le d<n,\ L^{(j)}_d-L^{(j)}_{d-1}=1\},
\]
as required in Lemma~\ref{lem:weighted-open-path}.
By Lemma~\ref{lem:cyclic-cut-bijection} and
\eqref{eq:cyclic-cut-openings}, the rotation \(y_i=x_{j+i}\)
maps \(\{x\in t\mathcal P(M):j(x)=j\}\) bijectively onto
\(\mathcal R_{L^{(j)},U^{(j)},J_j}(t)\), preserving the integer lattice.
The weight of coordinate \(x_{j+i}\) becomes
\(f_{j+i}(y_i,t-y_i)\). Partitioning by \(j(x)\) and rotating each
part therefore gives
\[
\begin{aligned}
W_{M;\mathbf c,\mathbf d}(t)
&=\sum_{j\in J_L}
\sum_{\substack{x\in t\mathcal P(M)\cap\mathbb Z^n\\j(x)=j}}
\prod_{i=1}^n f_i(x_i,t-x_i)\\
&=\sum_{j\in J_L}
\sum_{y\in\mathcal R_{L^{(j)},U^{(j)},J_j}(t)\cap\mathbb Z^n}
\prod_{i=1}^n f_{j+i}(y_i,t-y_i).
\end{aligned}
\]
By Lemma~\ref{lem:weighted-open-path}, each inner sum in the
second line agrees at positive integers with a polynomial in
\(\mathbb Q_{\ge0}[t]\). Their finite sum therefore agrees with
\(W_{M;\mathbf c,\mathbf d}\) at all positive integers, proving
\(W_{M;\mathbf c,\mathbf d}\in\mathbb Q_{\ge0}[t]\).
\end{proof}

\begin{proof}[Proof of Theorem~\ref{thm:spex-mpm}]
By Theorem~\ref{thm:weighted-mpm}, every connected multi-path
matroid \(M\) on at least two elements satisfies the hypothesis
of Lemma~\ref{lem:extension-criterion}. Hence all series-parallel
extensions of \(M\) are Ehrhart positive.

It remains to treat one-element components and disconnected
multi-path matroids. Since multi-path matroids are closed under minors
\cite[Theorem~3.8]{BoninGimenezMultiPath}, every connected component
is again a multi-path matroid.
A one-element loop has basis indicator \(0\), and a one-element
coloop has basis indicator \(1\). The recursion in
\eqref{eq:state-vector-recursion} therefore gives \(\mathcal P_T^0\)
for the base polytope of a loop extension recorded by \(T\), and
\(\mathcal P_T^1\) for a coloop extension. Lemma~\ref{lem:convexification}
and Definition~\ref{def:full-load-count} give
\(i(\mathcal P_T^\varepsilon,t)=K_T(\varepsilon t,(1-\varepsilon)t)\)
for \(\varepsilon\in\{0,1\}\).
Theorem~\ref{thm:positive-expansion} shows that both \(K_T(0,t)\)
and \(K_T(t,0)\) have nonnegative coefficients.

Every series-parallel extension of \(M\) is a direct sum of
extensions of its connected components. Multiplicativity of
Ehrhart polynomials under direct sums completes the proof.
\end{proof}

\section{Transversal matroids are not Ehrhart positive}\label{sec:transversal-counterexample}
\label{sec:further-results}\label{subsec:transversal-counterexample}

Ferroni, Morales, and Panova conjectured that every connected matroid
whose base polytope admits a subdivision into base polytopes of
series-parallel matroids is Ehrhart positive
\cite[Conjecture~4.7]{FerroniMoralesPanovaLPM}.
Transversal matroids admit such subdivisions
\cite[Section~4.4]{FerroniMoralesPanovaLPM}.
We give a connected transversal matroid with a negative Ehrhart
coefficient, disproving this conjecture.

A transversal presentation is an indexed family of subsets of the
ground set whose partial transversals are the independent sets of the matroid.
Let \(E=\{1,\ldots,61\}\), let \(e=61\), and put
\(P_i=\{2i-1,2i\}\) for \(1\le i\le30\).
Define \(N\) by the transversal presentation
\begin{equation}
\bigl(P_1\cup\{e\},\ldots,P_{30}\cup\{e\},
\underbrace{E,\ldots,E}_{15\text{ copies}}\bigr).
\label{eq:transversal-presentation}
\end{equation}
The presentation limits the number of pairs that a basis contains
in full: as the rank calculation below shows, for \(B\subseteq E\),
\[
B\in\mathcal B(N)\quad\Longleftrightarrow\quad
|B|=45,\qquad \#\{1\le i\le30:P_i\subseteq B\}\le15.
\]

\begin{proposition}\label{prop:transversal-counterexample}
The matroid \(N\) is connected of rank \(45\) and satisfies
\([t^2]i(N,t)<0\).
\end{proposition}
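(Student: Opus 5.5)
We will first pin down the matroid, then compute its Ehrhart polynomial explicitly enough to read off the $t^2$ coefficient.

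\textbf{Rank function and bases.} By Rado's theorem, the rank of $A\subseteq E$ in a transversal matroid with presentation $(A_1,\dots,A_m)$ is
\[
\min_{K\subseteq[m]}\bigl(|A\cap\bigcup_{k\in K}A_k|+m-|K|\bigr).
\]
In our presentation the fifteen copies of $E$ may be assumed to be either all in $K$ or all out of it. A matching of $A$ is therefore as follows:
\begin{itemize}
\item the thirty sets $P_i\cup\{e\}$ are matched to at most one element of $A\cap(P_i\cup\{e\})$ each, with $e$ used at most once;
\item fifteen further elements are matched freely.
\end{itemize}
Hence
\[
r(A)=\min\bigl(|A|,\ 15+\#\{i:A\cap P_i\neq\varnothing\}+[\,e\in A\text{ and some }P_i\cap A=\varnothing\,]\bigr).
\]
A direct check gives $r(E)=45$. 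It also shows that a $45$-set $B$ is a basis iff it contains at most $15$ full pairs. Indeed, if $B$ contains $j$ full pairs, then $B$ needs $j$ surplus elements absorbed by the $15$ free copies. The element $e$ can serve one nonempty pair class only when some $P_i$ is uncovered, and this bookkeeping gives exactly the condition $j\le 15$.

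\textbf{Connectivity.} Any two elements of one pair $P_i$ lie in a common circuit with $e$, namely a $46$-set built from $16$ full pairs. Two distinct pairs are linked through such circuits, so $N$ is connected.

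\textbf{Lattice-point count.} By \eqref{eq:matroid-base-inequalities}, $t\mathcal P(N)$ consists of the $x\in[0,t]^{61}$ with sum $45t$ satisfying the rank inequalities. The binding rank inequalities are those for unions of pairs, possibly together with $e$. Write $s_i=x_{2i-1}+x_{2i}\in[0,2t]$. Up to redundant inequalities, the polytope should be
\[
\sum_i s_i+x_e=45t,\qquad \sum_{i\in I}(s_i-t)_+\le 15t+\text{(contribution of }e)\ \text{for all } I.
\]
This symmetric structure lets us count by the multiset of values $s_i$:
\begin{enumerate}
\item each pair with sum $s$ contributes $\min(s,2t-s)+1$ lattice points;
\item the excess $\sum_i \max(s_i-t,0)$ is bounded by $15t$ (adjusted by $x_e$).
\end{enumerate}
We would write $i(N,t)$ as a sum over $x_e$ and the number $k$ of pairs with $s_i>t$. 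This produces a convolution of the generating functions
\[
\sum_{s\le t}(s+1)z^s\quad\text{and}\quad \sum_{s>t}(2t-s+1)z^s,
\]
truncated at total excess $15t$. From this we would extract $i(N,t)$ as an explicit (large) polynomial, or at least its low-degree coefficients.

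\textbf{Main obstacle and fallback.} The main obstacle is the exact extraction of $[t^2]$, since the degree is $60$ and the truncation creates many boundary terms. We will try two routes in turn.
\begin{enumerate}
\item \emph{Direct extraction.} Use Ehrhart reciprocity and the known structure of low coefficients. Alternatively, interpolate: $i(N,t)$ for $t=0,\dots,60$ can be computed exactly from the one-dimensional convolution above, since the symmetry reduces the state to (excess, count). This yields the exact polynomial by Lagrange interpolation in rational arithmetic.
\item \emph{Comparison with a known example.} Compare $N$ to the sparse paving or relaxation-type examples of Ferroni \cite{FerroniNotEhrhartPositive}. There, negativity of $[t^2]$ arises from a large number of "cuspidal" constraints. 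One would express $i(N,t)$ as $i(\text{free-like polytope},t)$ minus a sum of corrections indexed by the $\binom{30}{16}$-type families of violated pair sets. The correction terms are symmetric products of hypersimplex-type counts whose $t^2$ coefficients are positive and, because of the binomial multiplicities, dominate.
\end{enumerate}
The presence of the specific numbers $30,15$ suggests the intended verification is a concrete computation. We would therefore present the reduction to the symmetric recursion and then report the resulting negative rational value of $[t^2]i(N,t)$.
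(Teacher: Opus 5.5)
Your strategy is the paper's: a rank formula via matchings, a description of \(\mathcal P(N)\) by the total excess of the thirty pairs, a count using the \(t+1-|j|\) integer pairs with deviation \(x(P_i)-t=j\), and exact interpolation from the values at \(t=0,\dots,60\). Your rank formula and basis criterion are correct. As written, however, the proposal does not prove the substantive claim \([t^2]i(N,t)<0\). That claim concerns one specific rational number, and you never compute it; promising to ``report the resulting negative rational value'' assumes the outcome. Your second route cannot replace the computation, because it gives no estimate showing that the \(t^2\)-coefficients of the correction terms dominate. The paper closes this step by evaluating
\[
i(N,t)=\sum_{p=14t}^{15t}\sum_{q=0}^{p-14t}[u^pv^q]\Psi_t(u,v)^{30}
\]
at \(t=0,\dots,60\), where \(p\) and \(q\) are the total positive and negative pair deviations. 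It then converts the values using \([t^2]\binom tj=(-1)^jH_{j-1}/j\) in exact arithmetic, obtaining an explicit negative rational, approximately \(-103.97\).

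Two supporting steps also need repair before that computation can be trusted. First, your polytope description is a guess (``should be'', ``contribution of \(e\)''). The correct statement is \(\mathcal P(N)=\{x\in[0,1]^E: x(E)=45,\ \sum_{i=1}^{30}\max\{x(P_i)-1,0\}\le15\}\), with no \(e\)-term; the inequalities for unions of pairs together with \(e\) follow from it and \(x_e\le1\). Necessity comes from the rank inequality on \(U=\bigcup_{x(P_i)>1}P_i\). Sufficiency comes from the bound \(x(S)\le c(S)+\sum_{P_i\subseteq S}\max\{x(P_i)-1,0\}+\varepsilon(S)\) for every \(S\subseteq E\), where \(c(S)\) is the number of pairs meeting \(S\) and \(\varepsilon(S)\) indicates \(e\in S\). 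With this description, \(x_e=15t-p+q\), and the constraints reduce to \(p\le15t\) and \(0\le x_e\le t\), which gives the summation range above. Second, your connectivity circuit is wrong. The union of any sixteen pairs is already a circuit: it has \(16>15\) full pairs, and deleting any element leaves an independent set. Hence a \(46\)-set containing sixteen full pairs properly contains a circuit and is not one. The circuits through \(e\) are the sets \(E\setminus\{a_1,\dots,a_{15}\}\) with the \(a_k\) in fifteen distinct pairs. Choosing the removed elements outside a given pair puts \(e\) in a common circuit with every other element. Alternatively, as in the paper, \(\tfrac{45}{61}\mathbf 1\) satisfies the description strictly, so \(\dim\mathcal P(N)=60=|E|-1\) and \(N\) is connected.
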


\begin{proof}
For \(S\subseteq E\), let \(c(S)\) be the number of pairs \(P_i\)
that meet \(S\), and set \(\varepsilon(S)=1\) if \(e\in S\) and
\(\varepsilon(S)=0\) otherwise. The presentation gives
\begin{equation}
r_N(S)=\min\{45,\ |S|,\ c(S)+15+\varepsilon(S)\}.
\label{eq:transversal-rank}
\end{equation}
An independent subset of \(S\) uses at most
\(c(S)+\varepsilon(S)\) of the first thirty sets and fifteen
copies of \(E\), proving the upper bound in
\eqref{eq:transversal-rank}.
For equality, assign one element of each nonempty \(S\cap P_i\)
to \(P_i\cup\{e\}\), and assign \(e\), if present, to an unused
\(P_i\cup\{e\}\) when \(c(S)<30\).
Assign up to fifteen remaining elements to distinct copies of \(E\).
When \(c(S)=30\), the assignments give \(\min\{45,|S|\}\).
In particular, \(r_N(E)=45\).
For \(|B|=45\), the number of complete pairs is
\(45-c(B)-\varepsilon(B)\), giving the stated basis criterion.

Write \(x(S)=\sum_{a\in S}x_a\).
Every \(x\in\mathcal P(N)\) lies in \([0,1]^E\) and satisfies
\(x(E)=45\). Let \(J=\{i:x(P_i)>1\}\) and
\(U=\bigcup_{i\in J}P_i\). The rank inequality for \(U\) gives
\(x(U)\le |J|+15\), so
\[
\sum_{i=1}^{30}\max\{x(P_i)-1,0\}=x(U)-|J|\le15.
\]
Conversely, take \(x\in[0,1]^E\) with \(x(E)=45\) and
\(\sum_{i=1}^{30}\max\{x(P_i)-1,0\}\le15\).
For every \(S\subseteq E\), each pair meeting \(S\) contributes
at most \(1\), together with \(\max\{x(P_i)-1,0\}\) when the
whole pair lies in \(S\). Thus
\[
x(S)\le c(S)+
\sum_{\substack{1\le i\le30\\P_i\subseteq S}}
\max\{x(P_i)-1,0\}+\varepsilon(S)
\le c(S)+15+\varepsilon(S).
\]
The coordinate bounds and \(x(E)=45\) also give
\(x(S)\le |S|\) and \(x(S)\le45\). Hence
\(x(S)\le r_N(S)\) for every \(S\subseteq E\) by
\eqref{eq:transversal-rank}, and
\eqref{eq:matroid-base-inequalities} yields \(x\in\mathcal P(N)\).
We therefore obtain
\begin{equation}
\mathcal P(N)=\left\{x\in[0,1]^E:
\begin{aligned}
&x(E)=45,\\
&\sum_{i=1}^{30}\max\{x(P_i)-1,0\}\le15
\end{aligned}\right\}.
\label{eq:transversal-polytope}
\end{equation}

The point \(\xi=(45/61)\mathbf1\) has coordinate sum \(45\),
satisfies \(0<\xi_a<1\) for every \(a\in E\), and gives
\(\sum_{i=1}^{30}\max\{\xi(P_i)-1,0\}=870/61<15\).
Therefore \(\mathcal P(N)\) contains a neighborhood of \(\xi\)
relative to the hyperplane \(x(E)=45\), so
\(\dim\mathcal P(N)=60\). Equation~\eqref{eq:matroid-dimension}
then shows that \(N\) is connected.

To count lattice points in \(t\mathcal P(N)\), fix
\(t\in\mathbb Z_{\ge0}\) and put \(z_i=x(P_i)-t\).
Among integer pairs \((x_{2i-1},x_{2i})\in\{0,\ldots,t\}^2\),
exactly \(t+1-|j|\) have deviation \(z_i=j\), for \(-t\le j\le t\).
Recording positive deviations by \(u\) and negative deviations
by \(v\) gives
\[
\Psi_t(u,v)=t+1+\sum_{j=1}^{t}(t+1-j)(u^j+v^j).
\]
For the thirty pairs, let
\(p=\sum_{i=1}^{30}\max\{z_i,0\}\) and
\(q=\sum_{i=1}^{30}\max\{-z_i,0\}\).
The coefficient \([u^pv^q]\Psi_t(u,v)^{30}\) counts the choices of
\(x_1,\ldots,x_{60}\) with total positive deviation \(p\)
and total negative deviation \(q\).
The condition \(x(E)=45t\) forces \(x_e=15t-p+q\), so the
remaining dilated constraints in \eqref{eq:transversal-polytope}
reduce to \(p\le15t\) and \(0\le15t-p+q\le t\), or equivalently
\(14t\le p\le15t\) and \(0\le q\le p-14t\). Consequently,
\begin{equation}
i(N,t)=\sum_{p=14t}^{15t}\sum_{q=0}^{p-14t}
[u^pv^q]\Psi_t(u,v)^{30}.
\label{eq:transversal-count}
\end{equation}

The values of \eqref{eq:transversal-count} at \(t=0,\ldots,60\)
determine \(i(N,t)\), since its degree is \(60\).
Write \(i(N,t)=\sum_{j=0}^{60}d_j\binom tj\) in Newton form,
where \(d_j=\sum_{s=0}^{j}(-1)^{j-s}\binom js\,i(N,s)\), and
put \({H_m=\sum_{\ell=1}^m1/\ell}\).
Since \([t^2]\binom tj=(-1)^jH_{j-1}/j\) for \(j\ge2\),
using exact integer and rational arithmetic we obtain
\begin{equation}
[t^2]i(N,t)=\sum_{j=2}^{60}\frac{(-1)^jH_{j-1}}j\,d_j
=-\frac{2346606608144260198739311306332175171673}
{22570536559081644141673293712923840000}<0.\qedhere
\label{eq:transversal-negative-coefficient}
\end{equation}
\end{proof}

\begin{remark}\label{rem:chatgpt-assisted-verification}
We used ChatGPT to assist in verifying the counterexample in
Proposition~\ref{prop:transversal-counterexample}.
\end{remark}

\begin{remark}\label{rem:positroid-subdivision-structure}
Theorem~\ref{thm:main-introduction} and
Proposition~\ref{prop:transversal-counterexample} show that a
subdivision into Ehrhart-positive series-parallel matroid base
polytopes need not yield an Ehrhart-positive polytope.
Positroid base polytopes also admit subdivisions into
series-parallel matroid base polytopes
\cite[Section~4.4]{FerroniMoralesPanovaLPM}.
The existence of a subdivision into series-parallel matroid base
polytopes alone therefore does not settle the conjecture that all
positroids are Ehrhart positive
\cite[Conjecture~6.3]{FerroniJochemkoSchroterRankTwo}.
\end{remark}

\Needspace{8\baselineskip}
\section{Matching polytopes}\label{sec:matching-polytopes}
\label{subsec:matching-polytopes}

Matching polytopes provide a polyhedral formulation of the maximum-weight
matching problem. Edmonds gave their linear inequality description
\cite[Theorem~(P)]{EdmondsMatchingPolytope}.

\begin{definition}\label{def:matching-polytope}
Let \(G=(V,E)\) be a finite simple graph. A \emph{matching} of \(G\)
is a set of edges no two of which share a vertex. The
\emph{matching polytope} of \(G\) is
\[
\mathcal M(G)=\operatorname{conv}\{
\mathbf1_N\in\{0,1\}^E:N\text{ is a matching of }G\}.
\]
The empty matching is included, and we use the lattice \(\mathbb Z^E\).
\end{definition}

The origin and all coordinate unit vectors lie in \(\mathcal M(G)\),
so \(\dim\mathcal M(G)=|E|\).

Let \(F=(V,E)\) be a forest, and fix a bipartition \(V=A\sqcup B\).
For a matching \(N\), let \(V(N)\) be the set of vertices incident
to its edges, and put
\begin{equation}
B_N=(A\mathbin{\backslash} V(N))\cup(B\cap V(N)).
\label{eq:forest-matching-basis}
\end{equation}
Each edge of \(N\) replaces its endpoint in \(A\) by its endpoint
in \(B\), so \(|B_N|=|A|\).

Liu showed that the sets \(B_N\) are the bases of a matroid \(M_F\)
on \(A\sqcup B\), and that \(\mathcal M(F)\) is integrally affinely
isomorphic to \(\mathcal P(M_F)\)
\cite[proof of Proposition~5.4.1]{LiuForestThesis}.
In our notation, his isomorphism is the restriction of the affine map
\(\Phi:\mathbb R^E\longrightarrow\mathbb R^{A\sqcup B}\)
defined by \(\Phi(x)=y\), where
\begin{equation}
y_a=1-\sum_{\substack{e\in E\\a\in e}}x_e
\quad\text{for }a\in A,\qquad
y_b=\sum_{\substack{e\in E\\b\in e}}x_e
\quad\text{for }b\in B.
\label{eq:forest-matching-map}
\end{equation}

The matroid \(M_F\) is series-parallel. Each tree component with an
edge can be built from a single edge by adding leaves. The single-edge
tree gives \(U_{1,2}\). By the basis description
\eqref{eq:forest-matching-basis}, adding a leaf in \(B\) adjacent to
\(a\in A\) gives a parallel extension at \(a\), while adding a leaf
in \(A\) adjacent to \(b\in B\) gives a series extension at \(b\);
cf. \cite[Table~1]{ApollonioSeriesParallelBDH}.
Different components give direct sums. An isolated vertex in \(A\)
contributes a coloop, and an isolated vertex in \(B\) contributes a loop.

\begin{corollary}\label{cor:forest-matching-positive}
Matching polytopes of forests are Ehrhart positive.
\end{corollary}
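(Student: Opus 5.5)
The plan is to transport the problem, via Liu's integral affine isomorphism, to base polytopes of series-parallel matroids (or direct sums of simple pieces), and then apply Theorem~\ref{thm:main-introduction} together with multiplicativity of Ehrhart polynomials. First we check that \(\Phi\) in \eqref{eq:forest-matching-map} restricts to a bijection \(\mathcal M(F)\to\mathcal P(M_F)\) whose dilation \(t\mathcal M(F)\to t\mathcal P(M_F)\) (using the \(t\)-dilated affine map \(y_a=t-\sum x_e\), \(y_b=\sum x_e\)) preserves lattice points in both directions. Lattice points map to lattice points since \(\Phi\) has integer coefficients. For the converse, we recover \(x\) from \(y\) by peeling leaves: in a forest, the edge at a leaf vertex \(\ell\) has \(x_e\) determined by \(y_\ell\) (\(x_e=y_\ell\) or \(t-y_\ell\)). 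We remove that edge and update the other endpoint's value by subtracting an integer, then induct. Hence \(x\) is integral whenever \(y\) is. This gives \(i(\mathcal M(F),t)=i(M_F,t)\). The paper cites Liu for the isomorphism, so this step amounts to confirming that the isomorphism is unimodular on the relevant lattices. One subtlety: the affine hull of \(\mathcal P(M_F)\) lies in a sublattice, so we compare lattice points of \(t\mathcal P(M_F)\cap\mathbb Z^{A\sqcup B}\) directly via the peeling inverse rather than via determinants.

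Next, \(M_F\) decomposes as a direct sum over the components of \(F\). A tree component with at least one edge gives a connected series-parallel matroid, built from \(U_{1,2}\) by the leaf-addition sequence described before the corollary. An isolated vertex gives a loop or a coloop, each of whose base polytope is a single point with Ehrhart polynomial \(1\). Since \(\mathcal P\) of a direct sum is the product of the factors' polytopes, \(i(M_F,t)\) is the product of the Ehrhart polynomials of the components. Theorem~\ref{thm:main-introduction} gives nonnegative coefficients for each nontrivial factor, and a product of polynomials in \(\mathbb Q_{\ge0}[t]\) stays in \(\mathbb Q_{\ge0}[t]\).

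The main thing to verify carefully is the claim that adding a leaf corresponds exactly to a parallel or series extension, so that the tree-component matroid is genuinely series-parallel. Concretely, consider adding a leaf \(b'\in B\) adjacent to \(a\in A\). Matchings of the new tree either avoid \(ab'\), giving the old bases \(B_N\), or use \(ab'\), which requires \(a\) to be unmatched in \(N\) and replaces \(a\) by \(b'\) in \(B_N\). This is exactly the parallel rule of \eqref{eq:extension-bases} with \(e=a\), \(e_1=a\), \(e_2=b'\). The case of a leaf \(a'\in A\) adjacent to \(b\in B\) is symmetric: \(a'\) enters the basis unless \(a'b\) is used, which requires \(b\notin B_N\) and then adds \(b\) instead. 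This matches the series rule. Since the paper already cites Apollonio for this, we would just spell out the two-line check and conclude.
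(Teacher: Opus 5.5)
Your proposal is correct and follows the paper's route. You pass to $\mathcal P(M_F)$ through Liu's map $\Phi$ in \eqref{eq:forest-matching-map}, recognize $M_F$ as a direct sum of connected series-parallel matroids (built from $U_{1,2}$ by leaf additions) together with loops and coloops, and apply Theorem~\ref{thm:main-introduction}. Your leaf-peeling proof that the dilated map preserves lattice points, and your explicit check of the parallel and series rules in \eqref{eq:extension-bases}, spell out what the paper takes from Liu and Apollonio.
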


\begin{proof}
If \(F\) has no edges, then \(\mathcal M(F)\) is a point.
For a forest with an edge, the integral affine isomorphism
\eqref{eq:forest-matching-map} gives
\(i(\mathcal M(F),t)=i(M_F,t)\). The matroid \(M_F\) is Ehrhart
positive by Theorem~\ref{thm:main-introduction}.
\end{proof}

Number the edges of an even cycle \(C_{2m}\), \(m\ge2\), cyclically.
Since \(C_{2m}\) is bipartite,
\(\mathcal M(C_{2m})=\{x\in[0,1]^{2m}:x_i+x_{i+1}\le1
\ (1\le i\le2m)\}\), with indices modulo \(2m\).
The substitution \(y_i=x_i\) for odd \(i\) and \(y_i=1-x_i\)
for even \(i\) identifies \(\mathcal M(C_{2m})\) integrally affinely
with the order polytope of a circular zig-zag poset:
the inequalities \(x_i+x_{i+1}\le1\) become \(y_1\le y_2\ge y_3\le\cdots\le y_{2m}\ge y_1\)
within \([0,1]^{2m}\).
Ferroni, Morales, and Panova proved that circular fence posets
have order polynomials with nonnegative coefficients
\cite[Theorem~7.7 and Example~7.8]{FerroniMoralesPanovaSkew}.
Substituting \(t+1\) in the order polynomial gives the Ehrhart
polynomial of \(\mathcal M(C_{2m})\), proving Ehrhart positivity.

Heuer and Striker \cite[Eq.~(3.7)]{HeuerStrikerPartialPermutation} give
\(i(\mathcal M(K_{2,n}),t)
=\binom{t+n}{n}^{2}-n\binom{t+2n-1}{2n}\)
for the complete bipartite graphs \(K_{2,n}\), \(n\ge1\).
Comparing the linear factors shows that each coefficient of
\(i(\mathcal M(K_{2,n}),t)\) is at least the corresponding coefficient
of \((1+t/2)\prod_{j=1}^{2n-1}(1+t/j)\), proving Ehrhart positivity.
They also report positive Ehrhart coefficients for the matching
polytopes of complete bipartite graphs \(K_{m,n}\) in their
computations for \(1\le m,n\le5\)
\cite[Remark~3.9]{HeuerStrikerPartialPermutation}.

The positivity for forests and even cycles, together with the
evidence for complete bipartite graphs, motivates the following conjecture.

\begin{conjecture}\label{conj:matching-ehrhart-positive}
Matching polytopes of finite simple graphs are Ehrhart positive.
\end{conjecture}

\section*{Acknowledgments}
This work was supported by the National Natural Science Foundation of China
(Nos. 12325112, 12288101).

\Needspace{6\baselineskip}
\bibliographystyle{amsplain}
\bibliography{references}

\bigskip
\Needspace{8\baselineskip}
{\small
\noindent
\textsc{Y. Li, School of Mathematical Sciences, Peking University,
Beijing 100871, P.R. China.}
\par\nobreak
\noindent
Email address: \texttt{\href{mailto:liyaao@stu.pku.edu.cn}{liyaao@stu.pku.edu.cn}}

\medskip
\noindent
\textsc{S. Shao, CAPT, LMAM and School of Mathematical Sciences,
Peking University, Beijing 100871, P.R. China.}
\par\nobreak
\noindent
Email address: \texttt{\href{mailto:sihong@pku.edu.cn}{sihong@math.pku.edu.cn}}
\par}
\end{document}